\theoremstyle{plain}
\newtheorem*{theorem*}{Theorem}
\newtheorem{theorem}  {Theorem}    [section]
\newtheorem{lemma}      [theorem]{Lemma}
\newtheorem{corollary}  [theorem]{Corollary}
\newtheorem{proposition}[theorem]{Proposition}
\newtheorem{remark}  [theorem] {Remark}
\theoremstyle{definition}
\newtheorem{definition} [theorem]{Definition}
\newcommand{\as}[1]{{\color{red} \sf AS: [#1]}}
\newcommand{\pn}[1]{{\color{ForestGreen} \sf PN: [#1]}}
\renewcommand{\H}{\mathbb H}
\newcommand{\A}{{\mathbb A}}
\renewcommand{\a}{{\mathbf a}}
\newcommand{\f}{{\mathbf f}}
\renewcommand{\c}{{\mathbf c}}
\newcommand{\F}{{\mathcal F}}
\newcommand{\J}{{\mathcal J}}
\newcommand{\Q}{{\mathbb Q}}
\newcommand{\Z}{{\mathbb Z}}
\newcommand{\N}{{\mathbb N}}
\newcommand{\R}{{\mathbb R}}
\newcommand{\C}{{\mathbb C}}
\newcommand{\bs}{\backslash}
\newcommand{\p}{\mathfrak p}
\newcommand{\OF}{{\mathfrak o}}
\newcommand{\GL}{{\rm GL}}
\newcommand{\PGL}{{\rm PGL}}
\newcommand{\SL}{{\rm SL}}
\newcommand{\SO}{{\rm SO}}
\newcommand{\sgn}{{\rm sgn}}
\newcommand{\sym}{{\rm sym}}
\newcommand{\ur}{{\rm ur}}
\newcommand{\ad}{{\rm ad}}
\newcommand{\Ind}{{\rm Ind}}
\newcommand{\Tr}{{\rm tr}}
\newcommand{\Ad}{{\rm Ad}}
\newcommand{\mat}[4]{{\setlength{\arraycolsep}{0.5mm}\left(
      \begin{array}{cc}#1&#2\\#3&#4\end{array}\right)}}
\def\onehalf{{1/2}}
\def\JL{\operatorname{JL}}
\def\SL{\operatorname{SL}}
\def\vol{\operatorname{vol}}
\def\GL{\operatorname{GL}}
\renewcommand{\Im}{\mathrm{Im}}
\def\eps{\epsilon}
\begin{document}

\bibliographystyle{plain}

\title[Automorphic forms of minimal type]{Some analytic aspects of automorphic forms on $\GL(2)$ of minimal
  type}

\author{Yueke Hu}
\address{Department of Mathematics\\
  ETH Zurich\\
  Raemistrasse 101\\ 8092 Zurich\\
  Switzerland}
\email{huyueke2012@gmail.com}

\author{Paul D. Nelson}
\address{Department of Mathematics\\
  ETH Zurich\\
Raemistrasse 101\\ 8092 Zurich\\
  Switzerland}
\email{paul.nelson@math.ethz.ch}

\author{Abhishek Saha}
\address{School of Mathematical Sciences\\
  Queen Mary University of London\\
  London E1 4NS\\
  UK}
\email{abhishek.saha@qmul.ac.uk}

\begin{abstract}
  Let $\pi $ be a cuspidal automorphic representation of
  $\PGL_2(\A_\Q)$ of arithmetic conductor $C$ and archimedean
  parameter $T$, and let $\phi$ be an $L^2$-normalized
  automorphic form in the space of $\pi$.  The sup-norm problem
  asks for bounds on $\| \phi \|_\infty$ in terms of $C$ and
  $T$.  The quantum unique ergodicity (QUE) problem concerns the
  limiting behavior of the $L^2$-mass $|\phi|^2 (g) \, d g$ of
  $\phi$.  All previous work on these problems in the
  conductor-aspect has focused on the case
  that $\phi$ is a
  newform.

  In this work, we study these problems for a class of
  automorphic forms that are not newforms.
  Precisely, we assume that for each prime divisor $p$ of $C$,
  the local component $\pi_p$ is supercuspidal (and satisfies
  some additional technical hypotheses), and consider
  automorphic forms $\phi$ for which the local components
  $\phi_p \in \pi_p$ are ``minimal'' vectors.
  Such vectors may be understood as non-archimedean analogues of
  lowest weight vectors in holomorphic discrete series
  representations of $\PGL_2(\mathbb{R})$.


  For automorphic forms as above, we prove a sup-norm bound that
  is sharper than what is known in the newform case.  In
  particular, if $\pi_\infty$ is a holomorphic discrete series
  of lowest weight $k$, we obtain the optimal bound
  $C^{1/8 -\eps} k^{1/4 - \eps} \ll_{\eps} |\phi|_\infty
  \ll_{\eps} C^{1/8 + \eps} k^{1/4+\eps}$.
  We prove also that these forms give analytic test vectors for the QUE
  period, thereby demonstrating the equivalence between the strong QUE
  and the subconvexity problems for this class of vectors.
  This finding contrasts the known failure
  of this equivalence
  \cite{NPS} for newforms of
  powerful level.
\end{abstract}

\maketitle

\section{Introduction}\label{s:introduction}
\subsection{Overview}
Let $\pi$ be a cuspidal automorphic representation of
$\GL_2(\A_\Q)$.  Many problems in the analytic number theory of
$\pi$ depend upon the choice of a specific $L^2$-normalized
automorphic form $\phi$ in the space of $\pi$.  For example, the
sup norm, $L^p$-norm and quantum unique ergodicity (QUE)
problems have this feature, while the subconvexity problem does
not.  In such problems, it is customary to work with
factorizable vectors $\phi = \otimes \phi_v$ for which
\begin{equation}\label{eq:customary-choice}
  \phi_\infty = \text{lowest nonnegative weight vector in } \pi_\infty,
  \quad
  \phi_p = \text{newvector in } \pi_p.
\end{equation}
But other reasonable choices are often possible, useful, and
more natural.

A basic illustration of this principle is given by
Lindenstrauss's proof of the QUE theorem.  One formulation of
that theorem is that as $\pi$ traverses a sequence as above for
which $\pi_\infty$ belongs to the principal series, the
$L^2$-masses of the vectors $\phi$ given by
\eqref{eq:customary-choice} equidistribute.  A key step in the
proof is to replace $\phi_\infty$ by another vector
$\widetilde{\phi_\infty}$ (the microlocal lift) whose limit
measures acquire additional invariance.  Further illustration of
this principle is given by period-based approaches to the
subconvexity and shifted convolution problems (see e.g. \cite{MR2726097,michel-2009,MR2437682}).

This work explores a particular choice for the local components
$\phi_p$ which turn out to have several remarkable properties.
Briefly, assuming that $\pi_p$ is supercuspidal and that its
conductor is a fourth power, we consider $\phi_p$ which are
analogues of the lowest weight vectors in holomorphic discrete
series representations of $\PGL_2(\mathbb{R})$; see Section
\ref{s:introdefs} for a more detailed description of these
vectors from this point of view and Definition \ref{def:waldsopt} for the formal definition.
We aim to demonstrate the strength of our analogy from the
analytic perspective by illustrating with two examples: the
sup norm problem and the QUE problem.

For lack of better
terminology, we refer to these vectors as \emph{minimal vectors}
or \emph{vectors of minimal type}.  (When $\pi_p$ belongs to the
principal series, analogous vectors were studied in
\cite{nelson-padic-que}.)  Minimal vectors are implicit in the
type theory approach to the construction of supercuspidal
representations, as in the works of Howe \cite{MR0492088,
  MR0492087}, Kutzko \cite{MR0507254}, Moy \cite{MR987030}, Bushnell \cite{MR882297}, and others. On the other hand, their analytic properties, in
the sense of the problems recalled above, do not appear to have
been explored.  The purpose of the present work is to fill this
gap.


Before describing in detail the vectors to be studied,
we indicate some of the intended applications.

\subsection{The sup norm problem in the level aspect}
Assume that $\phi = \otimes \phi_v$, with $\phi_\infty$ a vector of lowest non-negative weight and $\phi_p$ spherical for all primes $p \nmid C$.  Then $\phi$ corresponds to either
a Hecke--Maass cusp form $f$ of weight $k\in \{0,1\}$ and
Laplace eigenvalue $\lambda$ or to a holomorphic Hecke
eigencuspform $f$ of weight $k \in \Z_{>0}$ (with respect to some congruence subgroup).  The $\GL(2)$ sup-norm
problem asks for bounds on
$\| \phi \|_\infty = \|y^{k/2}f\|_\infty$ in terms of $C$ and
$k/\lambda$ and has been much studied recently.
(A variant of this problem asks for bounds on
$\|f|_{\Omega}\|_\infty$, where $\Omega$ is a fixed compact
set. This formulation avoids the cusps and focusses on behavior
at the bulk. We do not discuss this variant in the present
paper.)

In the case $C=1$ and $f$ a
Hecke--Maass cusp form of weight 0 for $\SL_2(\Z)$,  Iwaniec and
Sarnak \cite{iwan-sar-85} proved the pioneering result
$\lambda^{1/12-\eps} \ll_\eps \|f\|_\infty \ll_\eps
\lambda^{5/24 + \eps}$.
Their proof combined the
Fourier expansion with a subtle amplification argument. On the
other hand, for $f$ a holomorphic cuspidal eigenform of weight
$k$ for $\SL_2(\Z)$, the Fourier expansion alone turns out to be
sufficient to get the optimal exponent in the weight aspect;
this was worked out by Xia \cite{xia}, who
proved \begin{equation}\label{e:xia}k^{1/4-\eps} \ll_\eps
  \|y^{k/2}f\|_\infty \ll_\eps k^{1/4 + \eps}.\end{equation}

For $C>1$, one needs to make a choice for $\phi_p$ at each prime
$p$ dividing $C$.
The customary choice has been
to take the newvector at each prime. The corresponding forms $f$
are (Hecke--Maass or holomorphic) \emph{newforms} with respect
to the group $\Gamma_1(C)$. For such newforms and for \emph{squarefree}
$C$ there were several results \cite{blomer-holowinsky,
  harcos-templier-1, harcos-templier-2, templier-sup,
  templier-sup-2} culiminating in the bound
$\|\phi\|_\infty \ll_{k/\lambda,\eps} C^{1/3 + \eps}$ due to
Harcos and Templier.
(Here, for simplicity, we have quoted the bound
only in the conductor-aspect,
noting that a hybrid result was proved by
Templier in \cite{templier-sup-2}.)
This bound was generalized
to the case of powerful (non-squarefree) $C$ by the third
author \cite{saha-sup-level-hybrid}. In the special case of
trivial central character, and again focussing only on the
conductor aspect, the results of \cite{saha-sup-level-hybrid}
give \begin{equation}\label{saha:result}\|\phi\|_\infty
  \ll_{k/\lambda,\eps} C_0^{1/6 + \eps}
  C_1^{1/3+\eps},\end{equation} where we write $C=C_0C_1$ with
$C_0$ the largest integer such that $C_0^2$ divides $C_1$. Note
that $C_0^{1/6} C_1^{1/3}$ always lies between $C^{1/4}$ and
$C^{1/3}$.

The above bounds have been recently extended to the case of
newforms over number fields, initially covering only squarefree
conductor \cite{blomer-harcos-milicevic,
  blomer-harcos-milicevic-maga} and more recently, for all
conductors by Edgar Assing (to appear in his forthcoming Bristol
thesis). As for lower bounds, one only knows the trivial bound
$1 \ll \|\phi\|_\infty$ in general; however in the case when the
central character is highly ramified, there exist results giving
large lower bounds \cite{sahasupwhittaker,templier-large} due to
the unusual behavior of local Whittaker newforms (the
corresponding best-known upper bounds are also worse in these
cases).

Thus, the state-of-the-art for the $\GL(2)$ sup-norm problem may
seem quite satisfying. Nonetheless there is a key deficiency in
all the works so far --- they focus exclusively on newforms. The
situation for Hecke eigenforms that correspond at the ramified
places to interesting local vectors that are not newvectors
remains completely unexplored.  One aim of this paper is to
explore the sup norm problem when $\phi_p$ is a minimal vector at each prime $p$ dividing $C$.
As indicated above, these local vectors may be viewed as
$p$-adic analogues of holomorphic vectors at infinity.  The
corresponding global automorphic forms $\phi$ will be referred
to as automorphic forms of minimal type. For such forms, we prove a level aspect sup-norm bound that is strongly analogous to the weight aspect bound \eqref{e:xia}.

\begin{theorem}[See Theorem \ref{t:globalmain} for a more general hybrid version]\label{t:mainintro}Let $\pi \simeq \otimes_v \pi_v$ be an irreducible, unitary, cuspidal automorphic
  representation of $\GL_2(\A)$ with trivial central character
  and conductor $C$. Assume that $C=N^4$ is the fourth power of
  an odd integer $N$ and suppose, for each prime $p$ dividing
  $C$, that $\pi_p$ is a supercuspidal representation. Let
  $\phi$ be an $L^2$-normalized automorphic form in the space of
  $\pi$ that is of minimal type. Then
$$C^{\frac18 - \eps} \ll_{k/\lambda,\eps} \|\phi \|_\infty \ll_{k/\lambda, \eps} C^{\frac18+ \min(\frac{1}{32}, \frac{\delta_\pi}{2})+ \eps}.$$ Above, $\delta_\pi$ is any exponent towards the Ramanujan conjecture for $\pi$; in particular we may take $\delta_\pi = 0$ if $\pi_\infty$ is holomorphic and $\delta_\pi =7/64$ otherwise.
\end{theorem}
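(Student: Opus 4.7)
The plan mirrors Xia's weight-aspect argument \eqref{e:xia}, adapted to the level aspect via the dictionary that minimal vectors at ramified primes play the role of holomorphic lowest-weight vectors at infinity. The starting point is the Whittaker expansion $\phi(g) = \sum_{\alpha \in \Q^\times} W_\phi(a(\alpha) g)$ with $W_\phi = \prod_v W_{\phi_v}$. The key local input is a description of the Whittaker function of a minimal vector at a ramified prime $p$: using Bushnell--Kutzko / Howe-type compact-induction models, one shows that $W_{\phi_p}(a(\alpha))$ is supported on a single valuation line $\val_p(\alpha) = -c(\pi_p)/2$, and further concentrated on a coset of a small subgroup therein, where it takes an $L^2$-normalized peak value determined by the formal degree of $\pi_p$. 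Together with the archimedean peak $y^{k/2}e^{-2\pi y}$ at $y \asymp k$, the product of local peak values is $\asymp C^{1/8} k^{1/4}$, making these the natural target bounds.

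For the lower bound, I would evaluate $\phi$ at a point $g_0 = a(y_0) h_\infty \prod_{p \mid C} h_p$ chosen by strong approximation so that, at every ramified prime, there is a unique $\alpha_0 \in \Q^\times$ for which $a(\alpha_0) h_p$ lies in the highly restricted support of $W_{\phi_p}$, while $y_0 \asymp k$ places the archimedean Whittaker near its peak. These local support conditions force the Whittaker expansion at $g_0$ to reduce to the single dominant term $W_\phi(a(\alpha_0) g_0)$, whose size is the product of the local peak values, giving $|\phi(g_0)| \gg_\epsilon C^{1/8-\epsilon} k^{1/4-\epsilon}$.

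For the upper bound in the holomorphic case, the Whittaker expansion directly yields $\|\phi\|_\infty \ll_\epsilon C^{1/8+\epsilon} k^{1/4+\epsilon}$: the support conditions at ramified primes restrict $\alpha$ to a narrow lattice, the archimedean decay of $W_{\phi_\infty}$ cuts off the relevant range, the Hecke eigenvalues at unramified primes are bounded by Deligne, and the resulting sum has $\ll_\epsilon C^\epsilon k^\epsilon$ effective terms, each of size $\ll C^{1/8} k^{1/4}$. When Ramanujan is unavailable (generic $\pi_\infty$), replacing Deligne by $|\lambda_\phi(n)| \ll n^{\delta_\pi+\epsilon}$ inflates this to $C^{1/8 + \delta_\pi/2 + \epsilon} k^{1/4+\epsilon}$. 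To recover the $\min(1/32, \delta_\pi/2)$ exponent, one complements this with an amplified pre-trace inequality of Iwaniec--Sarnak type: insert an amplifier $\sum_\ell \alpha_\ell T_\ell$ of length a small power of $C$, and bound the geometric side using a test function microlocalized around the minimal vector, whose support on $\PGL_2(\Q_p)$ is adapted to a non-split compact-mod-center torus and therefore gives sharper matrix counts than in the newvector setting.

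The main technical obstacle is the precise local normalization of $W_{\phi_p}$: computing $\int_{\Q_p^\times} |W_{\phi_p}(a(\alpha))|^2 \, d^\times \alpha$ in closed form, via the formal-degree formula for compactly induced supercuspidals, is what pins down the sharp exponent $1/8$ rather than a near-optimal substitute. A secondary obstacle is the matrix counting in the amplified pre-trace formula: because minimal vectors are stabilized by non-split tori, the relevant orbital integrals differ qualitatively from those for newvectors and must be computed from scratch using the support properties established in the first step.
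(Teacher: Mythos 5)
Your plan — extract the very concentrated support of the local Whittaker function of a minimal vector from the compact-induction model, use the arithmetic-progression support to control the Whittaker expansion, read off the lower bound from the peak term, and get the holomorphic upper bound directly by an Xia-type argument — matches the paper's proof for the lower bound, the holomorphic (Case 2) upper bound, and the $C^{1/8+\delta_\pi/2+\eps}$ piece of the Maass (Case 1) upper bound. The precise local statement you need (Proposition \ref{Whittakermain} and Corollary \ref{localwsupportfinal}) is exactly that the Kirillov-model restriction of a minimal vector is the indicator of a single coset $\varpi^{-2n}\a U_n$, which yields the normalization $N^{1/2}=C^{1/8}$ you want.

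The genuine divergence is in how the $C^{1/32}$ savings in the Maass case is obtained. You propose an amplified pre-trace inequality with a test function built from the matrix coefficient of the minimal vector (Proposition \ref{keymatrixprop}), plus orbital-integral / lattice counting adapted to the non-split torus. The paper explicitly avoids any amplification in Theorem \ref{t:mainintro}: the $C^{1/32}$ exponent is obtained purely from the Whittaker expansion by combining H\"older's inequality with the higher-moment bound $\sum_{1\le |n|\le X}|\lambda_\pi(n)|^{8}\ll_\eps X(NTX)^\eps$, which in turn rests on the existence of the symmetric power lifts $\sym^r\pi$ for $r\le 4$ (Proposition \ref{p:sym4}). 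This keeps the whole argument at the level of the Fourier expansion and is precisely what makes the result ``close to Xia's'' in spirit, a feature the introduction highlights. Your amplification route is plausible in principle — the paper's ``Further remarks'' section proposes exactly that for compact quotients, where there is no Whittaker expansion — but as written it is incomplete: the crucial counting of $\gamma$ with $k_p^{-1}\gamma k_p\in Z(\Q_p)K_{T_p}(n_p)$ is left unaddressed, and in the non-compact case one would still need the Fourier-expansion argument near the cusp. One small secondary remark: the sharp normalization $q^{n/2}$ does not require the formal degree formula as you suggest; it falls out of the explicit Kirillov-model description of $W_0$ and the inner product \eqref{e:norminnerwhit} (see Remark after Proposition \ref{Whittakermain}).
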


The upper-bound in Theorem \ref{t:mainintro} is much stronger than what is known when $\phi$ is a newform
(with the same assumptions on $\pi$ as above). In the newform case, the best known
upper bound \cite{saha-sup-level-hybrid} is $C^{1/4+ \eps}$,
which is just the ``local bound" in the level aspect (both for newforms as well as for the minimal automorphic forms considered here).  The bound obtained in this paper gives the first instance of an automorphic form of powerful level for which the local sup-norm bound in the level aspect has been improved upon. Furthermore, our bound is \emph{optimal} in the case when $\phi$ corresponds to a holomorphic cusp form, and the proof (as we will see) relies only on
the Whittaker/Fourier expansion. Thus, it is very close to Xia's
result \cite{xia} in many respects.
\subsection{Period integrals for QUE}\label{sec:intro-period-integrals-que}
Going back to the holomorphic newform case, assume that the local components of $\phi$ are given by
\eqref{eq:customary-choice}, that $\pi$ has trivial central character, and that $\pi_\infty$ is a holomorphic discrete series of lowest weight $k$.  Then $\phi$ corresponds to a holomorphic newform $f$ of weight $k$ with respect to $\Gamma_0(C)$.  For each Hecke--Maass cusp form $g$ of weight 0 for $\SL_2(\Z)$, define $$D_f(g) =  \frac{\int _{\Gamma_0(C) \backslash \mathbb{H} }
y ^k \lvert f \rvert ^2 (z)  g(z) \, \frac{d x \, d y }{ y ^2 }
}{\int _{\Gamma_0(C) \backslash \mathbb{H} }
y ^k \lvert f \rvert ^2 (z)   \frac{d x \, d y }{ y ^2 }}.$$

The problem of proving $D_f(g)\rightarrow 0$
for fixed $g$
as the parameters $C$ and $k$ of $f$ grow
is a natural analogue of the Rudnick--Sarnak
quantum unique ergodicity (QUE) conjecture \cite{MR1266075}. It was proved by
Holowinsky and Soundararajan \cite{holowinsky-soundararajan-2008} that
$D_f(g) \rightarrow 0$ for fixed $C$ ($=1$) and varying $k
\rightarrow \infty$; we refer to their paper
and \cite{sarnak-progress-que}
for further historical background.
The case of varying squarefree levels
was addressed in \cite{PDN-HQUE-LEVEL},
where it was shown that
$D_f(g) \rightarrow 0$ as $Ck \rightarrow \infty$
provided that $C$ is squarefree. Finally, it was proved in \cite{NPS} that $D_f(g) \rightarrow 0$
  whenever $C k \rightarrow \infty$ (without any restriction on $C$). In fact, the main result of \cite{NPS} gave an unconditional power savings bound $D_f(g) \ll_g C_0^{-\delta_1} \log(Ck)^{-\delta_2}$ for some positive constants $\delta_1, \delta_2$, where as before, $C_0$ denotes the largest integer such that $C_0^2|C$. Further extensions of this result to the case when $g$ is not of full level were obtained in \cite{Hu:17a}.

  There is a marked difference above between the case when $C$ is squarefree and the case when $C$ is powerful. For $C$ squarefree,
a generalization
of Watson's formula
(see \cite{PDN-HQUE-LEVEL} for a precise version)
asserts
that for each
$g$ as above, corresponding to an automorphic representation $\sigma_g$,
one has
\begin{equation}\label{eq:32}
\left\lvert D_{f}(g) \right\rvert^2
= (C k)^{-1+o(1)}
L(\pi \times \pi \times \sigma_g,\onehalf).
\end{equation}
Here the convexity bound reads $L(\pi \times \pi \times \sigma_g,\onehalf) \ll (Ck)^{1+o(1)}$. Thus, for squarefree levels, the subconvexity and QUE problems are essentially equivalent. A major point of \cite{NPS} was that this equivalence is no longer true for powerful levels. For example, in the case when $C$ is a perfect square, the results of \cite{NPS} imply that $
  \left\lvert D_f(g)  \right\rvert^2
  \ll_{g,k} C^{\theta-1}
L(\pi \times \pi \times \sigma_g,\onehalf)
$
where $\theta = 7/64$. The convexity bound in this case gives $L(\pi \times \pi \times \sigma_g,\onehalf) \ll_k C^{1/2+o(1)}$. So in this case, the convexity bound alone is enough to imply QUE with power savings in the level aspect! More generally, as shown in \cite{NPS}, the QUE problem is significantly \emph{easier} than the subconvexity problem in the case of newforms of powerful level (in contrast to the squarefree case, where these problems are essentially equivalent).

One may ask whether the equivalence between QUE and subconvexity
might be recovered for powerful levels by replacing the newform
with a different choice of vector. We show that this is indeed
the case for automorphic forms having a local component of
minimal type in a supercuspidal
representation
of fourth power conductor.
For a related observation when the local component
belongs to a principal series representation,
see \cite[Rmk 30]{nelson-padic-que}.

Let $\pi$, $C=N^4$ and $\phi$ be as in Theorem
\ref{t:mainintro}. We assume that $\pi_\infty$ is a holomorphic
discrete series of lowest weight $k$. We can associate to $\phi$
a holomorphic modular form $f$ defined by $f(z) = j(g_\infty,
i)^k\phi(g_\infty)$ where $g_\infty \in \SL_2(\R)$ is any matrix
such that $g_\infty i = z$. We let $\Gamma$ denote any
congruence subgroup such that $f|_k \gamma = f$ for all $\gamma
\in \Gamma$ (we will see later that we may take $\Gamma =
\Gamma(N^2)$). We prove the following result.

\begin{theorem}\label{thm:watson-ext}
  Let $g$ be a Hecke-Maass cuspform for $\SL_2(\Z)$, and let $\sigma_g$ be the automorphic representation generated by (the adelization of) $g$.
  Then
  $$\frac{
      \left\lvert \int_{\Gamma \backslash \mathbb{H}}
        g(z) |f|^2(z) y^k \, \frac{d x \, d y}{y^2}
      \right\rvert^2
    }{
      \left(\int_{\SL_2(\Z) \backslash \mathbb{H}}
        |g|^2(z)  \, \frac{d x \, d y}{y^2} \right)
      \left( \int_{\Gamma \backslash \mathbb{H}}
        |f|^2(z) y^k \, \frac{d x \, d y}{y^2}
      \right)^2
    }
    =  \frac{1}{8 }
    \frac{\Lambda(\pi \times \pi \times \sigma_g,\onehalf)}{
      \Lambda(\ad \sigma_g,1) \Lambda(\ad \pi,1)^2} \prod_{p| C}  I_p,
    $$ where each local factor $I_p$ satisfies
    $$I_p \asymp \rm{Cond}(\pi_\p\times \pi_p)^{-1/2}.$$
\end{theorem}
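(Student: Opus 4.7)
The plan is to reduce the statement to local computations via Ichino's triple product formula. The left-hand side, after unfolding the normalization of the inner products, equals the square of the adelic period
$$I(\phi,\bar{\phi},\varphi_g) = \int_{\PGL_2(\Q) \backslash \PGL_2(\A)} \phi(g) \overline{\phi(g)} \varphi_g(g) \, dg$$
divided by the product of the squared $L^2$-norms of $\phi$ and $\varphi_g$, where $\varphi_g$ is the adelization of $g$. Ichino's formula factorizes $|I|^2 / (\|\phi\|^2)^2 \|\varphi_g\|^2$ as $C_\infty \cdot \Lambda(\pi \times \pi \times \sigma_g,\tfrac{1}{2})/(\Lambda(\ad \pi,1)^2 \Lambda(\ad \sigma_g,1))$ times a product of normalized local integrals $I_v$ that equal $1$ at almost all places.

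First I would identify the classical factor $1/8$ with the archimedean local integral. Since $\pi_\infty$ is a holomorphic discrete series of weight $k$ and $\phi_\infty$ is the lowest weight vector, while $\sigma_{g,\infty}$ is a spherical unitary principal series, the archimedean integral $I_\infty$ together with Ichino's global constant is a classical computation that can be found (in a slightly different form) in Watson's thesis and in \cite{MR2726097}. At each finite prime $p \nmid C$ all three local components are unramified, so the Macdonald-type computation of Ichino shows $I_p = 1$.

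The substantive step is the evaluation of $I_p$ at primes $p \mid C$. Writing $\Phi_{\pi_p}(g) = \langle \pi_p(g)\phi_p,\phi_p\rangle/\|\phi_p\|^2$ and similarly for $\sigma_{g,p}$ with the spherical vector $v_p$, we have
$$I_p = \int_{\PGL_2(\Q_p)} |\Phi_{\pi_p}(g)|^2 \Phi_{\sigma_{g,p}}(g) \, dg.$$
Because $\phi_p$ is a minimal vector, it transforms under a compact-open subgroup $J_p \subset \GL_2(\Q_p)$ (attached to an embedding of a quadratic extension, modulo the centre) by an explicit character. As a consequence, $\Phi_{\pi_p}$ is supported on $Z(\Q_p) J_p$ and is constant (of modulus $1$) on cosets there modulo the character; this is the content of the type-theoretic construction underlying Definition \ref{def:waldsopt}. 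On the other hand, $v_p$ is fixed by $\GL_2(\Z_p)$, so $\Phi_{\sigma_{g,p}}$ is essentially constant on the support of $\Phi_{\pi_p}$ (which is concentrated near the identity). The integral therefore collapses to the volume $\vol(Z(\Q_p) J_p / Z(\Q_p))$ inside $\PGL_2(\Q_p)$. A direct count using the index of $J_p$ in the maximal compact, together with the standard relation $\mathrm{Cond}(\pi_p \times \pi_p) \asymp [\PGL_2(\Z_p) : \bar{J}_p]^2$ for minimal-type supercuspidals, yields $I_p \asymp \mathrm{Cond}(\pi_p \times \pi_p)^{-1/2}$.

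The hardest step to execute cleanly is the last one: one must pin down the precise support and transformation behaviour of the matrix coefficient $\Phi_{\pi_p}$ attached to a minimal vector (in particular controlling the contribution from elements in the normaliser of $J_p$ coming from the quadratic extension), and then relate the resulting volume to $\mathrm{Cond}(\pi_p \times \pi_p)$. Once this is done, assembling the archimedean, unramified, and ramified local integrals recovers the asserted identity, and the trivial central character assumption ensures that no stray quadratic twist factors appear.
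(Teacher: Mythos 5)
Your proposal is correct and follows essentially the same route as the paper: apply Ichino's formula, observe that the unramified factors are trivial and the archimedean one is explicit, and then evaluate $I_p$ at $p \mid C$ using the fact that the matrix coefficient of a minimal vector is a unimodular character supported exactly on $Z K_T(n)$ (Proposition \ref{keymatrixprop}), so that $|\Phi_{\pi_p}|^2$ is the indicator of that set and the integral collapses to $\vol(K_T(n)) \asymp q^{-2n}$, which one compares with $\mathrm{Cond}(\pi_p\times\pi_p)=q^{4n}$ (itself requiring the twist-minimality input of Lemma \ref{l:mini}). One small correction in phrasing: the reason $\Phi_{\sigma_{g,p}}$ is constant on the support is not that $ZK_T(n)$ is ``concentrated near the identity'' but rather that $K_T(n) = T(\OF)K(n)$ sits inside the full maximal compact $\GL_2(\Z_p)$, on which the spherical matrix coefficient is identically $1$; the set $K_T(n)$ contains the whole compact torus $T(\OF)$, so it is not small in the naive sense, and the argument would break if the support left $K$.
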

In the above case, the convexity bound reads $\Lambda(\pi \times \pi \times \sigma_g,\onehalf) \ll_{g,k} C^{1/2 + o(1)} = \rm{Cond}(\pi
\times \pi)^{1/2 + o(1)}$. So Theorem \ref{thm:watson-ext} shows that for the family of cusp forms coming from minimal vectors, the QUE and subconvexity problems are essentially equivalent. In fact, our local results imply more general identities in which
$g$ is allowed to have some level.

It is very likely that, by combining Theorem
\ref{thm:watson-ext} with the arguments of \cite[Sec. 3]{NPS}, one
could establish the estimate $D_f(g) \ll \log (C k)^{-\delta}$
for small $\delta > 0$ and fixed $g$, but we do not pursue this
here.

\subsection{Automorphic forms of minimal type}\label{s:introdefs}
In the rest of this introduction, we explain in detail the
concept of an automorphic form of minimal type and touch upon
some of the key ideas that power our results.

Let $\pi \simeq \otimes_v \pi_v$ be an irreducible, unitary, cuspidal representation of $\GL_2(\A_\Q)$ of conductor $C$. We assume henceforth for simplicity that the central character of $\pi$ is trivial. An automorphic form $\phi=\otimes_v \phi_v$ in the space of $\pi$ can be constructed out of any choice of local vectors $\phi_v\in \pi_v$ such that $\phi_p$ is spherical ($\GL_2(\Z_p)$-fixed)  at almost all primes $p$. It is very natural to choose $\phi_p$ to be the (unique up to multiples) spherical vector   at \emph{all} primes not dividing the conductor $C$, and we will always do so. At the archimedean place, we will choose $\phi_\infty$ to be a vector of minimal non-negative weight $k$, i.e., with the property \begin{equation}\label{weighteqintro}\pi_\infty \mat{\cos(\theta)}{\sin(\theta)}{-\sin(\theta)}{\cos(\theta)} \phi_\infty = e^{ik \theta}\phi_\infty\end{equation} where $k$ is the smallest non-negative integer (which in our case must be an even integer as the central character is trivial) for which the above equality holds for some $\phi_\infty$. Note that $k=0$ if $\pi_\infty$ is a principal series representation and $k \ge 2$ if $\pi_\infty$ is a discrete series representation.

Now, consider the primes $p$ dividing $C$. What should we take $\phi_p$ to be? One standard possibility would be to take $\phi_p$ to be the newvector, i.e., fixed by a congruence subgroup of the form $\begin{bmatrix}
  1+ p^c \Z_p  & \Z_p  \\
  p^c \Z_p  & \Z_p^\times
\end{bmatrix}$ where $c$ is taken as small as possible, whence
newform theory implies $c= v_p(C)$.

The minimal vectors studied in this paper may be viewed as an
alternative to the newvector in many cases.
  As we now explain, they may be regarded as
  non-archimedean analogues of the holomorphic (lowest weight) vector at infinity for a discrete series.
Let $T_\infty:=\R^\times \SO(2)$ be the standard maximal non-split torus inside $\GL_2(\R)$; we have the isomorphism $T_\infty \cong \C^\times$ sending $r \mat{\cos(t)}{\sin(t)}{-\sin(t)}{\cos(t)}$ to $re^{it}$. Let $\theta_{\pi_\infty}$ be the  character on $\C^\times$ given by $\theta_{\pi_\infty}: r e^{i t} \mapsto e^{ikt}$ which we may view as a character on $T_\infty$. Then the equality \eqref{weighteqintro} may be restated as  \begin{equation}\label{e:weightintronew}\pi_\infty(t_\infty) \phi_\infty = \theta_{\pi_\infty}(t_\infty)\phi_\infty, \quad t_\infty \in T_\infty.\end{equation}
The character $\theta_{\pi_\infty}$ depends only on $k$ and is therefore an invariant attached to $\pi_\infty$.

Let us further explicate the relation between $\pi_\infty$ and $\theta_{\pi_\infty}$ when  $\pi_\infty$ is a discrete series representation. Let $\xi_{\pi_\infty}$ be the character on $\C^\times$ given by $re^{it} \mapsto e^{i(k-1)t}$. By a special case of the local Langlands correspondence --- see \cite[(3.4)]{knapp} and note that $\pi_\infty \simeq D_{k-1}$ in the notation of \cite{knapp} ---  the  $L$-parameter of $\pi_\infty$  under the local Langlands correspondence is the representation $\Ind_{W_\C}^{W_\R} \xi_{\pi_\infty}$ of the real Weil group $W_\R$; equivalently, the representation $\pi_\infty$ is obtained by \emph{automorphic induction} from the character $\xi_{\pi_\infty}$ of $\C^\times$. Let $\eta_\C$ be the character  on $\C^\times$ given by  $r e^{i t} \mapsto e^{it}$ which we may think of as the simplest extension of the sign character on $\R^\times$ to $\C^\times$. Then we have $\theta_{\pi_\infty} = \eta_\C \xi_{\pi_\infty}$.

Next, take $p$ to be a prime dividing $C$. Then there is a unique unramified quadratic field extension $E_p$ of $\Q_p$ which should replace $\C$ in our analogy.  As in the archimedean case, we can specify a maximal non-split torus $T_p$ inside $\GL_2(\Z_p)$ such that $T_p \simeq E_p^\times$; without loss of generality we may assume that $T_p$ is in canonical form (see Definition \ref{d:inertorus}). Now, suppose that $\pi_p$ is a supercuspidal representation of even minimal (exponent of) conductor. Then, similarly to above, $\pi_p$ is obtained by automorphic induction from some regular character $\xi_{\pi_p}$ of $E_p^\times$ (see  \cite[Prop. 3.5]{tunnell78}). Let $\eta_{E_p}$ be the unique unramified extension to $E_p^\times$ of the quadratic character on $\Q_p^\times$ associated to the extension $E_p/\Q_p$ by local class field theory. We view $\eta_{E_p}$ as the non-archimedean analogue of the character $\eta_\C$ defined earlier. Define the character $\theta_{\pi_p}$ on  $T_p \simeq E_p^\times$ by  $\theta_{\pi_p} = \eta_{E_p} \xi_{\pi_p}$, which  is then the analogue of the character $\theta_{\pi_\infty}$ on $T_\infty \simeq\C^\times$ defined above. Analogously to \eqref{e:weightintronew}, we define a \emph{minimal vector} to be any non-zero element $\phi_p$ in the space of $\pi_p$ such that \begin{equation}\label{e:waldseqintro}\pi_p(t_p) \phi_p = \theta_{\pi_p}(t_p)\phi_p, \quad t_p \in T_p.\end{equation}

The comparison of \eqref{e:weightintronew} and
\eqref{e:waldseqintro} shows that minimal vectors are
the non-archimedean analogues of the lowest weight
(holomorphic) vectors in archimedean discrete series
representations. The minimal vectors also occur naturally from
the point of view of microlocal analysis, in that they are
approximate eigenvectors under the action by small elements of
the group;
they are in this sense analogous also to the $p$-adic microlocal
lifts
considered in \cite{nelson-padic-que}.
We
remark here that given a character $\chi_p$ of $T_p\simeq
E_p^\times$, a $T_p$-eigenvector with eigencharacter $\chi_p$ is a vector
$\phi_p$ that satisfies $\pi_p(t_p) \phi_p = \chi_p(t_p)\phi_p$
for each $t_p \in T_p$.
The choice
$\chi_p=\theta_{\pi_p}$ corresponds to our case, whereby  the vector acquires some  remarkable properties.

The Saito--Tunnell theorem \cite{Tunnell83, Saito1993}
implies that a minimal
vector, \emph{if it exists}, is unique up to multiples
(once the group $T_p$ is fixed);
moreover, a
minimal vector exists if and only if
$\eps(1/2, \pi_p \otimes \mathcal{AI}(\theta_{\pi_p}^{-1}))=1$ (where $\mathcal{AI}$ denotes automorphic induction from $E_p^\times$).
We verify in Proposition \ref{p:existencewalds} below
that if $p$ is
odd, $v_p(C)$ is a multiple of 4, and $\pi_p$ is supercuspidal,
then a minimal vector (as we have defined it) indeed exists. Precisely, given such a $\pi_p$, the character $\theta_{\pi_p}$ of $T_p$ can be extended to a character $\chi_{\pi_p}$ of the compact-mod-centre group $L:=T_p(1+p^nM_2(\Z_p))$ (where $n= \frac{v_p(C)}{4}$) with the property that $\pi_p \simeq c-\Ind_{L}^{G}\chi_{\pi_p}.$ The restriction of $\pi_p$ to $L$ contains $\chi_{\pi_p}$, which gives an immediate proof of existence. Incidentally, the  pair $(L,   \chi_{\pi_p})$ is in some sense the smallest possible among all inducing pairs for $\pi_p$ and constitutes a minimal $K$-type in the   sense of Moy \cite{MR987030}. Therefore, a minimal vector, in our setup, is precisely one that generates the (one-dimensional) minimal $K$-type associated to $\pi_p$. This is one of the reasons for our use of the term ``minimal" to describe these vectors.

Returning to the global setup, we suppose that $C=N^4$ is the fourth power of an odd integer, and $\pi_p$ is supercuspidal at all primes dividing $C$. Then, by choosing $\phi_p$ to be a minimal vector at each prime $p$ dividing $C$, we can construct a global automorphic form $\phi = \otimes_v \phi_v$ in the space of $\pi$; we call this an automorphic form of minimal type. It is precisely for such forms $\phi$ that our Theorem \ref{t:mainintro} applies.

We end this subsection with a brief discussion of what an automorphic form $\phi$ of minimal type looks like classically. We can associate to $\phi$ a function $f$ on $\H$ defined by $f(z) = j(g_\infty, i)^k\phi(g_\infty)$ where $g_\infty \in \SL_2(\R)$ is any matrix such that $g_\infty i = z$. Then there exists an integer $D$ and a character $\chi_\pi$ on the ``toric'' congruence group
$$\Gamma_{T,D}(N) := \left\{ \mat{a}{b}{c}{d} \in \SL_2(\Z): a \equiv d \pmod{N}, \ c \equiv -bD \pmod{N} \right \}$$ such that $$f|_k \gamma = \chi_\pi(\gamma)f, \quad \gamma \in \Gamma_{T,D}(N).$$ The character $\chi_\pi$ turns out to be trivial on the principal congruence subgroup of level $N^2$ which is contained in $\Gamma_{T,D}(N)$; see Remark \ref{r:adelic} for more details.   Thus, $f$ is a (very special) member of the space of (holomorphic or Maass) Hecke eigencuspforms of weight $k \in 2\Z$ with respect to the principal congruence subgroup of level $N^2$. Theorem \ref{t:mainintro} gives the optimal sup-norm bound in the conductor aspect (assuming the Ramanujan conjecture) for such forms $f$, just as \eqref{e:xia} gives the optimal sup-norm bound in the weight aspect for holomorphic eigencuspforms. This fits nicely with our analogy between holomorphic vectors at infinity and minimal vectors at a finite prime.
\subsection{The Whittaker expansion}The strong bound in Theorem
\ref{t:mainintro} is obtained purely from the Whittaker
(Fourier) expansion, and depends on an important property of
minimal vectors. We now explain this.

As before, let  $\pi \simeq \otimes_v \pi_v$ be an irreducible, unitary, cuspidal representation of $\GL_2(\A_\Q)$ of conductor $C = N^4=\prod_p p^{4 n_p}$ and of trivial central character. We begin with some general discussion, which applies to any automorphic form $\phi$ in the space of $\pi$.  The Whittaker expansion for $\phi$, which we want to exploit to bound $|\phi(g)|$, looks as follows, $$\phi(g) = \sum_{q \in \mathbb{Q}_{\neq 0}}
W_\phi(\mat{q}{}{}{1} g)$$ where $W_\phi$ is the global Whittaker function attached to $\phi$. Let $g = g_\f g_\infty \in G(\A),$ where $g_\f$ denotes the finite part of $g$ and $g_\infty$ denotes the infinite component. There is an integer $Q(g_\f)$, depending on $g_\f$, such that the Whittaker expansion above is supported only on those $q$ whose denominator divides $Q(g_\f)$. Moreover, the sum decays very quickly after a certain point $|q| > T(g_\infty)$ due to the exponential decay of the Bessel function. The upshot is that \begin{equation}\label{whitintro}\phi(g) = \sum_{\substack{m \in \Z_{\neq 0}}}
W_\phi(\mat{m/Q(g_\f)}{}{}{1} g)\end{equation} with only the
terms $|m|< Q(g_\f)T(g_\infty)$
contributing essentially.

Now, suppose that $\phi$ is an automorphic form of minimal
type.
We let $g_\f$ vary over the set $\prod_{p |C}\GL_2(\Z_p)$ and
$g_\infty$ vary over the set $\mat{y}{x}{}{1}$ with $y \ge
\sqrt{3}/2$. This gives a generating domain, similar to the one
used in \cite{saha-sup-level-hybrid}, and leads to $Q(g_\f) =
N^2$. Using this alone, a standard argument (see the discussion in Section 1.4 of \cite{saha-sup-level-hybrid})
   gives the bound $|\phi(g)| \ll_{k/\lambda, \eps} C^{1/4 +\eps}$, which is  as good as the best known bound in the case of newforms. Incidentally, it turns out that $C^{1/4+\eps}$ is the ``local bound" in our case just as it is in the case of newforms of conductor $C$. This follows from Corollary \ref{cor:matrixcoeff}. Here, we use the term ``local bound" in the sense of \cite{marsh15}.

 Theorem \ref{t:mainintro} of course, goes beyond the local bound, and indeed gives the optimal bound under Ramanujan. What allows us to do this is the following key property of the local Whittaker function $W_{\phi_p}$ associated to a minimal vector, namely, for each $k \in \GL_2(\Z_p)$ there exists some $\a_k \in \Z_p^\times$  such that $W_{\phi_p}(\mat{q}{}{}{1}k) \neq 0$ for $q \in \Q_p^\times$ if and only if $p^{2n_p}q $ belongs to $\Z_p^\times$ and satisfies $p^{2n_p}q  \equiv \a_k \pmod{p^{n_p}}$. In sharp contrast,
the formula for the Whittaker function of a newvector involves a sum of twisted
$\GL_2$-epsilon factors \cite[Section 2.7]{sahasupwhittaker},
with a likely cancellation that seems difficult to prove.

Using the factorization of global Whittaker functions, it follows that \eqref{whitintro} takes the form
\begin{equation}\label{whitintro2}\phi(g) = \sum_{ m \equiv A \bmod{N}}
W_\phi(\mat{m/N^2}{}{}{1} g)\end{equation} for some integer
$A$. In other words, the Whittaker function  of an automorphic
form of minimal type is supported on an
\emph{arithmetic progression}.

This last point can also be explained classically. Suppose that $\pi_\infty$ is a holomorphic discrete series of lowest weight $k$, in which case $\phi$ corresponds to a holomorphic modular form $f$ with respect to the group $\Gamma_{T,D}(N)$. Then the above discussion implies that the Fourier expansion of $f$ at any cusp $\alpha = \sigma(\infty)$ takes the form \begin{equation}\label{fourier3}
(f|_k\sigma)(z) = \sum_{\substack{n>0 \\ n \equiv b \bmod{N}}}a_f(n;\alpha)e^{2 \pi i nz/N^2}.
\end{equation}

It is precisely the fact that the Fourier coefficients above are supported on an arithmetic progression that
allows us to get the additional savings  beyond the local bound.
\subsection{Further remarks}The minimal vectors have
many other important properties that we have not discussed
above. Perhaps their most striking feature is that the matrix
coefficient associated to an $L^2$-normalized
minimal vector is a \emph{character} of the supporting
subgroup (see Proposition \ref{keymatrixprop}).
This matrix coefficient formula can be
easily used to calculate the local integrals of
Gan--Gross--Prasad type involving a minimal vector
(as in the proof of Theorem \ref{thm:watson-ext}).
More generally,
one might hope to use such vectors in classical period formulas
(e.g., in Waldspurger's formula or the triple product formula) with a view
towards applications to subconvexity, mass equidistribution,
$L^p$-norms, arithmetic of special $L$-values, and so on;
Theorem \ref{thm:watson-ext}
may be understood as a first step in that direction.

The fact that the matrix coefficient of a minimal vector turns
out to be a character also has another very interesting
interpretation, which further justifies our use of the word
``minimal.'' By formal degree considerations, the integral of
the square of the matrix coefficient associated to an
$L^2$-normalized vector in a square-integrable local
representation $\pi_p$ of conductor $p^{c_p}$ is
\emph{independent} of the choice of vector, and equals roughly
$p^{-c_p/2}$. The matrix coefficient of an $L^2$-normalized
minimal vector is a character and so has \emph{maximum} possible
absolute value on the support (since the absolute value of a
matrix coefficient of an $L^2$-normalized vector can never
exceed 1, by the triangle inequality). Therefore the minimal
vectors have the property that their matrix coefficients have as
small support as possible!

Incidentally, this last fact makes such a matrix coefficient a
great choice as a test function in the pre-trace formula for
amplification purposes, since small support translates to more
congruences for counting purposes. Indeed, while Theorem
\ref{t:mainintro} does not rely on any sort of amplification,
one could consider the analogous sup-norm problem for automorphic forms of minimal type on a compact quotient of the upper half-plane. In this case, while there is no Whittaker expansion, an amplification argument should allow one to achieve an upper bound for the sup-norm in the conductor aspect that improves upon the local bound. One could also consider analogous problems for quaternion algebras ramified at infinity, where similarly strong bounds may be expected from amplification. We suppress further discussion of this topic in the interest of brevity.

 Next, we say a few words about the restriction to $C$ being a
 fourth power of an odd integer, and $\pi_p$ being supercuspidal
 at all primes dividing $p$. These conditions can in fact be
 removed when $p$ is not equal to 2, provided one is happy to
 slightly relax the definition of minimal vector. To give an
 example, consider the case of an odd prime $p$ such that
 $\pi_p$ is supercuspidal of even minimal (exponent of)
 conductor but $v_p(C) \equiv 2 \pmod{4}$. In this case, no
 vector satisfying \eqref{e:waldseqintro} exists (the
 Saito--Tunnell criterion is not satisfied). However, if one
 were to slightly perturb \eqref{e:waldseqintro} by multiplying
 $\theta_{\pi_p}$ by any character of conductor $p$, then
 vectors satisfying the resulting equality indeed exist. Similar
 discussion
 (roughly in the spirit of \cite{nelson-padic-que})
 applies to principal series representations (one needs to take $E_p = \Q_p \times \Q_p$ in this case) as well as dihedral supercuspidals with odd minimal (exponent of) conductor, for which we should take $E_p$ to be a ramified quadratic extension of $\Q_p$. Indeed, if $p \neq 2$, every case can be covered, leading to a comprehensive theory of such ``almost-minimal'' vectors that takes care of every type of representation. The sup-norms of the resulting automorphic forms of almost-minimal type can be studied similarly, though the bounds will be sometimes slightly worse than what we get.

  The case of $p=2$ is much more subtle due to the presence of non-dihedral supercuspidals, and currently it is not clear to us how to define minimal vectors in that case. One general possibility in every case might be to consider a vector inside a minimal $K$-type. The details of this theory over $\GL(2)$ can be found in \cite{MR0507254}. Such a definition should in fact work not just for $\GL(2)$ but for all reductive groups, using a well-known theorem of Moy--Prasad \cite{moy-prasad} on the existence of unrefined minimal $K$-types for irreducible, admissible representations of $p$-adic reductive groups. It would be very interesting to see if these ideas can be used to study the sup-norm problem in the level aspect for higher rank groups.

\subsection*{Acknowledgements}
Y.H. thanks Simon Marshall for some general discussions about the sup-norm problem. A.S. thanks Emmanuel Kowalski and Ralf Schmidt for helpful
discussions on certain topics related to this paper. The authors thank Valentin Blomer for discussions related to Proposition \ref{p:sym4}, and the anonymous referee for helpful comments which have improved this paper.

Y.H. gratefully acknowledges the support of SNF grant SNF-169247 during the work leading to this paper. Part of this paper is based upon work  supported by the National
Science Foundation under Grant No. DMS-1440140 while Y.H. and P.N. were in residence at the Mathematical Sciences Research Institute in Berkeley, California, during the Spring 2017 semester.
\subsection*{Notations}
We collect here some general notations that will be used throughout this paper. Additional notations will be defined where they first appear in the paper.

Let $\H$ denote the upper half plane and $\GL_2(\R)^+$ the group of real two-by-two matrices with positive determinant. For $z \in \H$, $\left(\begin{smallmatrix}a&b\\c&d
\end{smallmatrix}\right) \in \GL_2(\R)^+$, we let $\left(\begin{smallmatrix}a&b\\c&d
\end{smallmatrix}\right)z=\frac{az+b}{cz+d} \in \H$ be the point obtained by M\"obius transformation.  Given a function $f$ on $\H$, an integer $k$, and some $\gamma = \left(\begin{smallmatrix}a&b\\c&d
\end{smallmatrix}\right) \in \GL_2(\R)^+$, we define a function $f |_k \gamma$ on $\H$ via $(f |_k \gamma)(z) = \det(\gamma)^{k/2} (cz+d)^{-k} f(\gamma z).$

For any two complex numbers $\alpha, z$, we let $K_\alpha(z)$ denote the modified Bessel function of the second kind. The symbol $\varphi$ denotes the Euler totient function.

 For elements $x$, $y$, $t$ in some ring $R$, we define the following two-by-two matrices over $R$: $$a(y) = \begin{bmatrix}
  y &  \\
  & 1
\end{bmatrix},
\quad
n(x) = \begin{bmatrix}
  1 & x \\
  & 1
\end{bmatrix},
\quad z(t)
= \begin{bmatrix}
  t &  \\
  & t
\end{bmatrix}.
$$

 We use the notation
$A \ll_{x,y,\ldots} B$
to signify that there exists
a positive constant $C$, depending at most upon $x,y,\ldots$
so that
$|A| \leq C |B|$. The absence of the subscripts $x,y,\ldots$ will mean that the constant $C$ is universal. We will use $A \asymp B$ to mean that $B \ll A \ll B$.
 The symbol $\epsilon$ will denote a small positive quantity. The values of $\epsilon$ and that of the constant implicit in $\ll_{\epsilon}$ may change from line to line.

We shall always assume every character  is continuous (but not necessarily unitary). The convention used for our Hermitian inner products is that they are linear in the first variable.


\section{Minimal vectors and their Whittaker functions}\label{sec:localcalcs}
This section will be purely local.

\subsection{Preliminaries on fields, characters and representations}\label{sec:2-notations}
Let $F$ denote a non-archimedean local
  field  of characteristic zero. We assume throughout that $F$ has \emph{odd} residue cardinality $q$.
Let $\mathfrak{o}$ be its ring of integers,
and $\mathfrak{p}$ its maximal ideal.
Fix a uniformizer $\varpi$ of $\OF$ (a choice of generator of $\mathfrak{p}$) .
Let $|.|$ denote the absolute value
on $F$ normalized so that
$|\varpi| = q^{-1}$. For each $x \in F^\times$, let $v(x)$ denote the integer such that $|x| = q^{-v(x)}$.  For a non-negative integer $m$, we define the subgroup $U_m$ of  $\OF^\times$ to be the set of elements $x \in \OF^\times$ such that $v(x-1) \ge m$.

 We denote the unique unramified quadratic field extension of $F$ by $E$. Since $q$ is odd, we note that $E = F(\sqrt{\delta})$ for any element $\delta \in \OF^\times \setminus (\OF^\times)^2.$ We denote the ring of integers of $E$ by $\OF_E$. The valuation $v$ and the absolute value $| \ |$ naturally extend to the field $E$. Note that $\varpi$ is a uniformizer of $\OF_E$. We let $x \mapsto \bar{x}$ denote the unique non-trivial automorphism of $E/F$.

We let $\eta$ denote the unique unramified quadratic character of $F^\times$; equivalently, $\eta$ is the character associated to the extension $E/F$ via local class field theory. For each character $\chi$ of $F^\times$, we let $a(\chi)$ denote the smallest integer such that $\chi$ is trivial on the subgroup $U_{a(\sigma)}$. Similarly, for a character $\chi$ of $E^\times$, we let $a(\chi)$ denote the smallest integer such that $\chi$ is trivial on the subgroup $\{x \in \OF_E^\times: v(x-1) \ge a(\chi)\}$.

We fix once and for all an additive character $\psi$ of $F$ such that $\psi$ is trivial on $\OF$ but not on $\varpi^{-1}\OF$. We let $\psi_E:= \psi \circ \Tr_{E/F}$ be the corresponding additive character on $E$.

Throughout this section, we denote $G = \GL_2(F)$ and $K=\GL_2(\OF)$. Define subgroups
$N =
\{n(x):  x\in F \}$,
$A = \{a(y): y\in F^\times \}$,
$Z =\{ z(t):
t \in F^\times \}$, $B_1=NA$,
and $B = Z N A = G \cap
\left[
  \begin{smallmatrix}
    *&*\\
    &*
  \end{smallmatrix}
\right]$ of $G$.
 For each integer $r$,
denote
\[
K_1(r)
= K \cap \begin{bmatrix}
  1+ \p^r  & \mathfrak{o}  \\
  \p^r  & \mathfrak{o}
\end{bmatrix}, \quad K(r) = K \cap \begin{bmatrix}
  1+ \p^r  & \p^r  \\
  \p^r  & 1+\p^r
\end{bmatrix}, \quad B_1(r) = K \cap \begin{bmatrix}
  1+ \p^r  & \p^r  \\
  0 & 1
\end{bmatrix}.
\]

We note our normalization of Haar measures.
The measure $dx$ on the additive group $F$ assigns volume 1
to $\OF$, and transports to a measure on $N$.
The measure $d^\times y$ on the multiplicative group $F^\times$ assigns
volume 1 to $\OF^\times$,
and transports to measures on
$A$ and $Z$.
We obtain a left Haar measure $d_Lb$ on $B$ via
$d_L(z(u)n(x)a(y)) = |y|^{-1}\, d^\times u \, d x \, d^\times
y.$
Let $dk$ be the probability Haar measure on $K$.
The Iwasawa decomposition
$G = B K$ gives a left Haar measure $dg = d_L b \, d k$ on $G$.

For $\pi$ an irreducible admissible generic representation of $G$, we let $\omega_\pi$ denote the central character of $\pi$. We define $a(\pi)$ to be the smallest non-negative integer such that $\pi$ has a $K_1(\p^{a(\pi)})$-fixed  vector. It is known that $\pi$ can be realized as a unique subrepresentation of the space of functions $W$ on $G$ satisfying $W(n(x)g) = \psi(x)W(g)$. This is the Whittaker model of $\pi$ and will be denoted $\mathcal{W}(\pi,\psi)$.

If $\pi$ is unitary, there is a unique (up to multiples) $G$-invariant inner product $\langle, \rangle$ on it. In this case, for a vector $v_0 \in \pi$, we define the matrix coefficient $\Phi_{v_0}$ on $G$ as follows:
$$\Phi_{v_0}(g) =
\frac{\langle
\pi(g)v_0 ,  v_0\rangle}{\langle v_0 , v_0\rangle}$$ which is clearly unchanged if $v_0$ is multiplied by a constant and is also independent of the normalization of inner product. We will normalize the inner product in the model $\mathcal{W}(\pi,\psi)$ as follows: \begin{equation}\label{e:norminnerwhit}\langle W_1, W_2 \rangle = \int_{F^\times} W_1(a(t)) \overline{W_2(a(t))}d^\times t.\end{equation}

The following lemma will be useful for us.
\begin{lemma}\label{l:mini}Let $\pi$ be an irreducible admissible supercuspidal representation of $G$ such that $a(\omega_\pi) < a(\pi)/2$. Then $\pi$ is twist-minimal, i.e., $a(\pi \otimes \chi) \ge a(\pi)$ for each character $\chi$ of $F^\times$.
\end{lemma}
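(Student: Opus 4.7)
The plan is to argue by contradiction, using three ingredients that are standard for supercuspidal representations of $\GL_2(F)$ in odd residue characteristic: the stability of the conductor under twists, the bound $a(\omega_\sigma) \le a(\sigma)/2$ for any supercuspidal $\sigma$, and the invariance $a(\chi^2) = a(\chi)$ for characters $\chi$ of $F^\times$ of conductor $\ge 2$.

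Suppose $\pi$ is not twist-minimal. I would first choose a character $\chi$ of $F^\times$ so that $\pi' := \pi \otimes \chi$ is itself twist-minimal with $a(\pi') < a(\pi)$. The first step is to pin down $a(\chi) = a(\pi)/2$, using the following stability of conductor: for a supercuspidal $\sigma$ of $\GL_2(F)$ and a character $\nu$ of $F^\times$, one has $a(\sigma \otimes \nu) = 2 a(\nu)$ when $2 a(\nu) > a(\sigma)$, and $a(\sigma \otimes \nu) = a(\sigma)$ when $2 a(\nu) < a(\sigma)$. Applying this with $(\sigma, \nu) = (\pi, \chi)$ rules out both $2a(\chi) > a(\pi)$ (otherwise $a(\pi') > a(\pi)$) and $2a(\chi) < a(\pi)$ (otherwise $a(\pi') = a(\pi)$), leaving $2 a(\chi) = a(\pi)$. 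In particular $a(\pi)$ must be even; the case $a(\pi)=2$ is impossible, since a twist of a supercuspidal is supercuspidal and hence of conductor $\ge 2$. Thus $a(\pi) \ge 4$ and $a(\chi) \ge 2$.

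The second step uses that $\pi'$ is itself supercuspidal, so satisfies $a(\omega_{\pi'}) \le a(\pi')/2 < a(\pi)/2$. The bound $a(\omega_\sigma) \le a(\sigma)/2$ for a supercuspidal $\sigma$ of $\GL_2(F)$ follows from the duality identity $\sigma^\vee \cong \sigma \otimes \omega_\sigma^{-1}$ combined with stability: if $2 a(\omega_\sigma) > a(\sigma)$, then $a(\sigma) = a(\sigma^\vee) = a(\sigma \otimes \omega_\sigma^{-1}) = 2 a(\omega_\sigma) > a(\sigma)$, a contradiction.

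The final step computes $a(\omega_{\pi'})$ directly via $\omega_{\pi'} = \omega_\pi \chi^2$. In odd residue characteristic, squaring is a bijection on $U_m / U_{m+1}$ for every $m \ge 1$ (since $2$ is a unit and $(1+x)^2 \equiv 1 + 2x \pmod{\p^{m+1}}$ for $x \in \p^m$ with $m \ge 1$), so $a(\chi^2) = a(\chi) = a(\pi)/2$. The hypothesis $a(\omega_\pi) < a(\pi)/2 = a(\chi^2)$ means these two conductors differ, so the conductor of the product equals the larger: $a(\omega_{\pi'}) = a(\chi^2) = a(\pi)/2$. This contradicts the bound $a(\omega_{\pi'}) < a(\pi)/2$ from the previous paragraph, proving that $\pi$ must be twist-minimal. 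The main obstacle is the first step, identifying the regime $a(\chi) = a(\pi)/2$ via stability; once there, the remaining computations are short, and the hypothesis that $F$ has odd residue characteristic enters only through the invariance $a(\chi^2) = a(\chi)$.
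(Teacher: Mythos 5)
Your proof is correct and takes essentially the same route as the paper: both set up the hypothetical minimal twist $\pi' = \pi\otimes\chi$, establish $a(\chi) = a(\pi)/2$, invoke odd residue characteristic to get $a(\chi^2) = a(\chi)$, and then contradict the bound on the central character of the minimal representation. One small caution: the stability-of-conductor formula you quote is stated for an arbitrary supercuspidal $\sigma$ and applied directly to the non-minimal $\pi$ (and again, via duality, to deduce $a(\omega_\sigma) \le a(\sigma)/2$). The standard reference here --- Tunnell's Proposition 3.4, which the paper cites --- asserts both facts only for twist-\emph{minimal} $\sigma$. Your more general form is true, but it needs a short reduction to the minimal case (write $\sigma = \sigma_0 \otimes \chi_0$ with $\sigma_0$ minimal and $a(\sigma) = 2a(\chi_0)$, then compare $a(\nu)$ with $a(\chi_0)$). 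The paper sidesteps this entirely by applying the twist formula to the minimal $\sigma := \pi\otimes\chi$ and the character $\chi^{-1}$, getting $a(\pi) = 2a(\chi)$ in one step; otherwise the two arguments are identical in content.
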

\begin{proof}Suppose, on the contrary, that $\pi \simeq \sigma \otimes \chi^{-1}$ with $\sigma$ minimal, and $a(\sigma) < a(\pi)$. As $\sigma$ and $\pi$ are supercuspidal, we have $2 \le a(\sigma) < a(\pi)$. By a result of Tunnell \cite[Prop. 3.4]{tunnell78}, we have $a(\pi) = a(\sigma \otimes \chi^{-1}) = 2 a(\chi)$; so $a(\chi)>1$. Since $q$ is odd, we have  that $a(\chi^2)=a(\chi) = a(\pi)/2$. As $a(\omega_\pi)<a(\pi)/2$, it follows that $a(\omega_\pi \chi^2) = a(\chi^2)$.  On the other hand, we have $\omega_\pi = \omega_\sigma \chi^{-2}$, i.e., $\omega_\sigma = \omega_\pi \chi^2$. Therefore, $a(\omega_\sigma) = a(\chi^2)=a(\chi) =a(\pi)/2 > a(\sigma)/2$, which contradicts Proposition 3.4 of \cite{tunnell78}.
\end{proof}

\subsection{Inert tori and their eigenvectors}
For  $\alpha$, $\beta$, $\gamma$ elements of $F$, denote $S = S_{\alpha, \beta, \gamma} = \mat{\alpha}{\beta/2}{\beta/2}{\gamma}$ and define  $$T_{\alpha, \beta, \gamma}:= \{g\in \GL_2(F):\:^tgSg=\det(g)S\}.$$

\begin{definition}\label{d:inertorus}A subgroup $T$ of $G$ is called an \emph{inert torus} if $T = T_{\alpha, \beta, \gamma}$ such that $\delta:= \beta^2 - 4 \alpha \gamma $ satisfies\footnote{Equivalently, $\delta$ is not a square in $F$ and $v(\delta)$ is even.} $E = F(\sqrt{\delta})$. An inert torus $T$ is said to be in \emph{canonical form} if $T = T_{\alpha, 0, 1}$ for some $\alpha \in \OF^\times$, $-\alpha \notin (\OF^\times)^2$.
 \end{definition}

 If $T=T_{\alpha, \beta, \gamma}$ is an inert torus, then the map \begin{equation}\label{explicitisotorus}x+ y \sqrt{\delta}/2 \mapsto \left(\mat{x+y\beta/2}{y\gamma}{-y\alpha}
{x-y\beta/2}\right)\end{equation} gives an explicit isomorphism from  $E^\times$ to $T$.  If $T$ is an inert torus in canonical form, then  that $\delta = -4 \alpha$ and \eqref{explicitisotorus} takes $\OF_E^\times$ isomorphically onto $T(\OF)= T \cap K$. It follows immediately that for an inert torus $T$ in canonical form we have $T = Z T(\OF) = \bigsqcup_{n \in \Z}\varpi^n T(\OF)$.

We note down several additional useful properties about inert tori.
\begin{proposition}\label{p:inertori}\begin{enumerate}\item All inert tori in $G$ are conjugate. \item Let $T$ be an inert torus. Then there exists $g \in G$ such that $gTg^{-1}$ is in canonical form.  \item If $T_1$, $T_2$ are two inert tori in canonical form, then there exists $y \in \OF^\times$ such that $T_1 = a(y)T_2a(y)^{-1}$.
    \item Let $T$ be an inert torus in canonical form. Then $G=B_1T = TB_1$ and $K = B_1(\OF)T(\OF)= T(\OF)B_1(\OF)$.
\end{enumerate}
 \end{proposition}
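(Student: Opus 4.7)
My plan is to handle the four parts in order; parts (1)--(3) follow from structural or computational observations, while part (4) requires a concrete Iwasawa-style decomposition argument.

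\textbf{Parts (1) and (2).} Via the isomorphism \eqref{explicitisotorus}, each inert torus $T_{\alpha,\beta,\gamma}$ is realized as the image of an $F$-algebra embedding of the field $E = F(\sqrt{\delta})$ into $M_2(F)$. Since $q$ is odd, $E$ is the unique unramified quadratic extension of $F$, so every inert torus arises from an embedding of the same field $E$. The Skolem--Noether theorem applied to the central simple $F$-algebra $M_2(F)$ then asserts that any two such embeddings differ by an inner automorphism, so the resulting tori are conjugate in $\GL_2(F)$, proving (1). For (2), I would exhibit one canonical-form torus explicitly: pick any non-square unit $u \in \OF^\times$ and observe that $T_{-u,0,1}$ is inert and in canonical form; applying (1) then supplies the required conjugator.

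\textbf{Part (3).} A direct calculation using \eqref{explicitisotorus} shows that $a(y)\, T_{\alpha_2, 0, 1}\, a(y)^{-1} = T_{\alpha_2/y^2,\, 0,\, 1}$, so it suffices to find $y \in \OF^\times$ with $y^2 = \alpha_2/\alpha_1$. Since $q$ is odd, Hensel's lemma identifies $\OF^\times/(\OF^\times)^2$ with the two-element group $(\OF/\p)^\times/((\OF/\p)^\times)^2$. The hypothesis that both $-\alpha_1$ and $-\alpha_2$ lie in the non-trivial square class forces $\alpha_1 \alpha_2 = (-\alpha_1)(-\alpha_2) \in (\OF^\times)^2$, whence $\alpha_2/\alpha_1 \in (\OF^\times)^2$ and such a $y$ exists.

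\textbf{Part (4).} For the global decomposition $G = B_1 T$, I would solve the equation $g t^{-1} \in B_1$ directly. Writing $g = \mat{a}{b}{c}{d}$ and $t = \mat{x}{y}{-y\alpha}{x}$, requiring that the bottom row of $g t^{-1}$ equal $(0,1)$ becomes a linear system in $(x,y)$ whose solution is $(x, y) = (d, -c/\alpha)$, giving $t = \mat{d}{-c/\alpha}{c}{d}$ with $\det t = d^2 + c^2/\alpha$. Since $-\alpha \notin (F^\times)^2$, this determinant is nonzero for every $(c,d) \in F^2 \setminus \{(0,0)\}$, so $t \in T$ and $b_1 := g t^{-1} \in B_1$ by construction. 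The identity $G = T B_1$ then follows by inverting both sides, since $B_1$ and $T$ are subgroups. For the integral refinement $K = B_1(\OF) T(\OF)$, the same formula works, and the main obstacle is to verify $\det t = d^2 + c^2/\alpha \in \OF^\times$ whenever $k \in K$. The condition $\det k \in \OF^\times$ excludes the case $c, d \in \p$, so at least one of $c, d$ is a unit; in the delicate case where both are units, the vanishing of $d^2 + c^2/\alpha$ modulo $\p$ would make $-1/\alpha$ a square in $(\OF/\p)^\times$, contradicting the hypothesis on $\alpha$; in the remaining cases, $\det t$ is a unit plus an element of $\p^2$, hence a unit. Integrality of the entries of $b_1 = k t^{-1}$ is then automatic since $\alpha \in \OF^\times$. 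Finally, $K = T(\OF) B_1(\OF)$ follows by inverting, using that $B_1(\OF)$, $T(\OF)$, and $K$ are all closed under $g \mapsto g^{-1}$.
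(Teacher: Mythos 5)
Your proof is correct in all four parts, but your route through parts (1) and (2) is genuinely different from the paper's, and the logical order is reversed. The paper first proves (2) directly by diagonalizing the symmetric matrix $S$ associated to $T$ and then rescaling one of the diagonal entries by a square using a diagonal conjugation, and then proves (3) by the same kind of diagonal conjugation; (1) is then deduced as a corollary of (2) and (3). You instead prove (1) first by viewing each inert torus as the image of $E^\times$ under an $F$-algebra embedding $E \hookrightarrow M_2(F)$ and invoking Skolem--Noether, then obtain (2) by exhibiting one canonical-form inert torus and applying (1). The Skolem--Noether argument is more conceptual and avoids the matrix computations, while the paper's approach is elementary and entirely self-contained; both are valid, and your observation that the conductor and unit issues (Hensel's lemma, unramifiedness of $E$) only enter through the residue characteristic being odd is the right one. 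For parts (3) and (4) your argument is essentially the same as the paper's: (3) is the same $a(y)$-conjugation with the same square-class argument, and your explicit choice $(x,y) = (d, -c/\alpha)$ in (4), giving $t$ with $\det t = d^2 + c^2/\alpha$, recovers (after clearing the $\alpha$-factor) the paper's quantities $u_2, m_2$; your unit-mod-$\p$ case analysis for showing $\det t \in \OF^\times$ when $k \in K$ is also what the paper implicitly relies on. The inversion trick to obtain $TB_1$ from $B_1T$ is a minor shortcut over the paper's parallel explicit formulas $u_1, m_1$, and it works because $B_1$ and $T$ are closed under inversion.
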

\begin{proof}All the parts of the above Proposition follow from elementary computations involving 2 by 2 matrices.
Let us start with part (3). If $T_1 = T_{\alpha_1, 0,1}$ and $T_2=T_{\alpha_2, 0, 1}$, then there exist $m \in \OF^\times$ such that $\alpha_2= m \alpha_1^2$. So $S_{\alpha_2, 0, 1} = a(m)S_{\alpha_1, 0, 1}a(m)$ and therefore $T_2 = a(m^{-1})T_1 a(m)$.

Next we prove part (2). Suppose that $T$ is associated to a matrix $S$. There exists  $h \in \GL_2(F)$ such that $ {}^th S h = \mat{\lambda_1}{0}{0}{\lambda_2}$ for some $\lambda_i \in F^\times$. Write $\lambda_1/\lambda_2 = m n^2$ with $m \in \OF^\times$. Then $\lambda_2^{-1}\mat{n^{-1}}{}{}{1} {}^th Sh \mat{n^{-1}}{}{}{1} = \mat{m}{0}{0}{1}$. Consequently, we have $(h(a(n^{-1}))^{-1}T (h(a(n^{-1})) = T_{m,0,1}$. Part (1) frollows from Parts (2) and (3).

Finally, let us prove part (4). For $g = \mat{a}{b}{c}{d}$, put $$u_1 = \frac{\alpha (ad-bc)}{\alpha a^2 + c^2},  \ m_1 = - \frac{ab\alpha + cd}{\alpha a^2 + c^2}, \ u_2 = \frac{c^2 + d^2 \alpha}{\alpha (ad - bc)}, \ m_2 = - \frac{ac + \alpha bd}{\alpha(ad - bc)}.$$ Then an easy calculation shows that $g \mat{u_1}{m_1}{}{1} \in T$ and $\mat{u_2}{m_2}{}{1}g \in T$. Furthermore, if $g \in K$ then it is immediate that $u_1, u_2 \in \OF^\times$, $m_1, m_2 \in \OF$.
\end{proof}

Now let $T\subset G$ be an inert torus and let $\theta:E^\times
\rightarrow \C^\times$ be a character such that
$\theta|_{F^\times} = 1$. Using the isomorphism
\eqref{explicitisotorus}, we view $\theta$ as a character of $T$
(note that this entails \emph{fixing} a choice of square root
$\sqrt{\delta}$ in $E^\times$ which we henceforth do without
comment). Let $\pi$ be an irreducible admissible generic
representation of $G$ with \emph{trivial central character}. A
non-zero vector $v \in \pi$ is said to be a $(T,
\theta)$-eigenvector
if $$\pi(t) v = \theta(t) v, \text{ for all } t \in T.$$

It is known  that the space of $(T, \theta)$-eigenvectors in $\pi$ has dimension less than or equal to 1, and it has dimension 1 if and only if the epsilon factor $\eps(1/2, \pi \otimes \mathcal{AI}(\theta^{-1}))$ (which is equal to $\pm 1$) equals 1, where $\mathcal{AI}(\theta^{-1})$ is the  representation of $G$ obtained from $\theta^{-1}$ by automorphic induction; see \cite{MR0401654}, \cite{Saito1993}, \cite{ralfnewform}.

The precise choice of $T$ is  unimportant, because any two inert tori are conjugate in $G$. If $T_1$, $T_2$ are two inert tori with $T_2 = g T_1 g^{-1}$, and $v_1$ is a $(T_1, \theta)$-eigenvector, then $\pi(g)v_1$ is a $(T_2, \theta)$-eigenvector.
In particular, we may assume,
by taking a suitable conjugate of $T$, that our inert torus
$T$ is in canonical form $T=T_{\alpha,0,1}$ (see part (2) of Proposition \ref{p:inertori}).
In this case, we have $\sqrt{\delta}=2\sqrt{-\alpha}$. We define the shorthand notation $$w_\alpha := \mat{0}{1}{-\alpha}{0}.$$ The isomorphism \eqref{explicitisotorus} now reads
\begin{equation}\label{eq:explicitisotorusCanonical}
x+y\sqrt{-\alpha}\mapsto x+yw_\alpha = \mat{x}{y}{-\alpha y}{x}.
\end{equation}

The goal of the rest of Section \ref{sec:localcalcs} is to delve
into a particularly important case
in which $(T, \theta)$-eigenvectors exist and explicate some remarkable properties in that case.

\subsection{Compact induction and minimal vectors}
\begin{definition}\label{d:KT}Given an inert torus $T=T_{\alpha, 0, 1}$ in canonical form, we define for each non-negative integer $r$, the congruence subgroup $K_T(r)$ of $K$ as follows:
$$K_T(r) = \{g = \mat{a}{b}{c}{d}\in K: a-d \in \p^r, \ c + b \alpha  \in \p^r \}=T(\OF)K(r).$$
\end{definition}
Using the calculations in the proof of Proposition \ref{p:inertori}, part (4) it can be seen that
\begin{equation} K_T(r) = T(\OF)B_1(r) = B_1(r)T(\OF). \end{equation} Since $B_1(r)$ intersects $T$ trivially, it follows that the index of $K_T(r)$ in $K$ is $\asymp q^{2r}$.
\begin{lemma}\label{lemmatexitence}Let $T=T_{\alpha,0,1}$ be an inert torus in canonical form. Let $\theta$ be a character of $E^\times$ such that $a(\theta)  = 2n$ and $\theta|_{F^\times} = 1$. 
Then there exists $a_{\theta, T} \in \OF^\times$ such that $\psi_E(\varpi^{-n} a_{\theta, T} \sqrt{-\alpha} u) = \theta(1 + \varpi^n u)$ for all $u \in \OF_E$.
\end{lemma}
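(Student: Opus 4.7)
My plan is to identify the character $\chi(u) := \theta(1+\varpi^n u)$ as an additive character of $\OF_E / \varpi^n \OF_E$, then use Pontryagin duality on $E$ (with respect to $\psi_E$) to express it as $u \mapsto \psi_E(\beta u)$, and finally pin down the ``imaginary part'' of $\beta$ using the assumption $\theta|_{F^\times} = 1$. Assuming $n \ge 1$ (the case $n=0$ being immediate as $\psi_E$ is trivial on $\OF_E$ and any $a_{\theta, T} \in \OF^\times$ works), the map $u \mapsto 1 + \varpi^n u$ induces an isomorphism of additive groups $(1 + \varpi^n \OF_E)/(1 + \varpi^{2n}\OF_E) \cong \OF_E / \varpi^n \OF_E$, the cross terms in products $(1+\varpi^n u)(1+\varpi^n v)$ being absorbed into $\varpi^{2n}\OF_E$. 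Since $a(\theta) = 2n$ forces $\theta$ to be trivial on $1 + \varpi^{2n}\OF_E$ but nontrivial on $1 + \varpi^{2n-1}\OF_E$, the map $\chi$ is a well-defined additive character of $\OF_E/\varpi^n\OF_E$ that is nontrivial on the image of $\varpi^{n-1}\OF_E$.

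Since $E/F$ is unramified and $\psi$ has conductor $\OF$, the conductor of $\psi_E$ is $\OF_E$, so Pontryagin duality produces a unique $\beta \in \varpi^{-n}\OF_E / \OF_E$ with $\chi(u) = \psi_E(\beta u)$; the nontriviality of $\chi$ on the image of $\varpi^{n-1}\OF_E$ forces $\beta = \varpi^{-n}\gamma$ with $\gamma \in \OF_E^\times$, uniquely determined modulo $\varpi^n \OF_E$. To extract the information from $\theta|_{F^\times} = 1$, I would restrict the identity to $u \in \OF$: this yields $\psi(\varpi^{-n}(\gamma + \bar{\gamma}) u) = 1$ for every $u \in \OF$, which by the conductor of $\psi$ forces $\gamma + \bar{\gamma} \in \varpi^n \OF$. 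Writing $\gamma = a + b\sqrt{-\alpha}$ with $a, b \in \OF$ in the $\OF$-basis $\{1, \sqrt{-\alpha}\}$ of $\OF_E$ (valid because $-\alpha \in \OF^\times$ is a non-square, so $E = F(\sqrt{-\alpha})$), we obtain $2a \in \varpi^n \OF$; as $q$ is odd, this gives $a \in \varpi^n \OF$, so $\gamma \equiv b\sqrt{-\alpha} \pmod{\varpi^n \OF_E}$, and the unit condition $\gamma \in \OF_E^\times$ forces $b \in \OF^\times$. Setting $a_{\theta, T} := b$ then yields the claim, since $\psi_E(\varpi^{-n}(\cdot)\,u)$ depends only on its argument modulo $\varpi^n \OF_E$ when $u \in \OF_E$.

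The argument is essentially Pontryagin duality on $E$ combined with the observation that the ``real part'' of $\gamma$ is forced to be small by the triviality of $\theta$ on $F^\times$; I do not expect any serious obstacle. The only delicate point is keeping track of the ambiguity of $\gamma$ modulo $\varpi^n \OF_E$, but this is harmless because the desired identity $\psi_E(\varpi^{-n} a_{\theta,T}\sqrt{-\alpha}\, u) = \theta(1+\varpi^n u)$ descends through precisely that quotient.
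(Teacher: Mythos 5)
Your proof is correct, and it takes a route that is dual to the paper's rather than identical. The paper first restricts $\theta(1+\varpi^n u)$ to the ``imaginary line'' $u \in \sqrt{-\alpha}\,\OF$, obtaining an additive character $\psi'$ of $\OF$, applies Pontryagin duality over $F$ to produce $y$ with $\psi'(x) = \psi(xy)$, reads off $a_{\theta,T}$ from $y$, and only then verifies the desired identity on all of $\OF_E$ --- this verification being where $\theta|_{F^\times} = 1$ enters. You instead treat $u \mapsto \theta(1+\varpi^n u)$ from the outset as an additive character of $\OF_E/\varpi^n\OF_E$, apply duality over $E$ (using that $\psi_E$ has conductor $\OF_E$ since $E/F$ is unramified) to produce $\beta = \varpi^{-n}\gamma$ with $\gamma \in \OF_E^\times$, and then use the restriction to the ``real line'' $u \in \OF$ together with $\theta|_{F^\times}=1$ to force $\Tr_{E/F}(\gamma) \in \varpi^n\OF$, hence $\gamma \equiv b\sqrt{-\alpha}$ with $b \in \OF^\times$. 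In effect, where the paper restricts first and dualizes over $F$, you dualize over $E$ first and then restrict; both approaches are short, but yours makes the role of $\theta|_{F^\times}=1$ more conceptually transparent (it pins down the dual element directly rather than appearing in an a posteriori verification), at the modest cost of invoking the inverse different / conductor of $\psi_E$.
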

\begin{proof} Note that $\psi'(x):= \theta(1 + \varpi^{n}\sqrt{-\alpha}x)$ is an additive character on $\OF$. So, there must exist $y \in F$ such that $\psi'(x) = \psi(xy)$ for all $x \in \OF$. Comparing conductors, we see that  $v(y) = -n$. So we may put $y = -2a_{\theta, T}\alpha \varpi^{-n}$ for some $a_{\theta, T} \in \OF^\times$.  We claim that this $a_{\theta, T}$ works. Indeed, let $u = a + b \sqrt{-\alpha}  \in \OF_E$, with $a,b \in \OF$. Then $\psi_E(\varpi^{-n} a_{\theta, T} \sqrt{-\alpha} u) = \psi(-2 \varpi^{-n} a_{\theta, T} \alpha b) = \psi'(b) = \theta(1 + \varpi^{n}\sqrt{-\alpha} b) = \theta(1 + \varpi^{n}u)$ where in the last step we have used that $\theta|_{F^\times} = 1$ and $a(\theta)=2n$.
\end{proof}

This enables the following definition.
\begin{definition}\label{d:chipi}
Given an inert torus $T=T_{\alpha, 0, 1}$ in canonical form and a character $\theta$ on $E^\times$ (which we view as a character of $T$) with $a(\theta)  = 2n$ and $\theta|_{F^\times} = 1$,
we extend the character $\theta$ to a function $\chi_{\theta, T}$ on the group $ZK_T(n) = TK(n) $ as follows: $$
\chi_{\theta, T}\left(t (1+\varpi^n g) \right)=\theta(t)\psi(\varpi^{-n}a_{\theta, T} \mathrm{Tr} (w_\alpha g)),$$
or equivalently
$$
\chi_{\theta, T}\left(t \mat{1+\varpi^n y_1}{\varpi^n x_1}{\varpi^n x_2}{1+\varpi^n y_2}\right)=\theta(t)\psi(\varpi^{-n}a_{\theta, T} (x_2 - \alpha x_1)).$$
\end{definition}
To see that the above formulae are well-defined, we note that $T\cap K(n)=1+\varpi^n \OF_E$ under the identification given by \eqref{eq:explicitisotorusCanonical}.

\begin{lemma}\label{keylemmamatrix}
The function $\chi_{\theta, T}$ is a multiplicative character of $ZK_T(n)$.
\end{lemma}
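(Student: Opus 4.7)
The plan is to verify first that the two pieces of the definition of $\chi_{\theta,T}$ (on $T$ via $\theta$, on $K(n)$ via $\psi$) agree on the intersection $T \cap K(n)$, so that $\chi_{\theta,T}$ is well-defined on $TK(n)=ZK_T(n)$; then to check multiplicativity by a direct expansion, where the crucial fact is that the element $w_\alpha$ itself lies in $T$ (it is the image of $\sqrt{-\alpha}\in E^\times$ under \eqref{eq:explicitisotorusCanonical}) and hence commutes with $T$.

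For well-definedness, any element of $T\cap K(n)$ can, via \eqref{eq:explicitisotorusCanonical}, be written as $1+\varpi^n u$ with $u=a+b\sqrt{-\alpha}\in\mathfrak{o}_E$, and correspondingly as $1+\varpi^n g$ with $g=aI+bw_\alpha$. A short calculation using $w_\alpha^2=-\alpha I$ gives $\mathrm{Tr}(w_\alpha g)=-2b\alpha=\mathrm{Tr}_{E/F}(\sqrt{-\alpha}\,u)$, so the $\psi$-formula for $\chi_{\theta,T}$ on this element reduces to $\psi_E\bigl(\varpi^{-n}a_{\theta,T}\sqrt{-\alpha}\,u\bigr)$, which equals $\theta(1+\varpi^n u)$ by Lemma \ref{lemmatexitence}.

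For multiplicativity, take $x=t_1k_1$, $y=t_2k_2$ with $t_i\in T$ and $k_i=1+\varpi^n g_i\in K(n)$, $g_i\in M_2(\mathfrak{o})$. Write
\[
xy=t_1t_2\bigl((t_2^{-1}k_1t_2)k_2\bigr)=t_1t_2\bigl(1+\varpi^n g'\bigr),\qquad
g'=t_2^{-1}g_1t_2+g_2+\varpi^n t_2^{-1}g_1t_2g_2.
\]
Since $t_2\in T=ZT(\mathfrak{o})$, conjugation by $t_2$ preserves $M_2(\mathfrak{o})$, so $g'\in M_2(\mathfrak{o})$ as required. Now use that $w_\alpha\in T$ to commute $w_\alpha$ past $t_2$, and then cyclicity of trace, to obtain $\mathrm{Tr}(w_\alpha t_2^{-1}g_1 t_2)=\mathrm{Tr}(t_2 w_\alpha t_2^{-1}g_1)=\mathrm{Tr}(w_\alpha g_1)$. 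Hence
\[
\varpi^{-n}a_{\theta,T}\mathrm{Tr}(w_\alpha g')
=\varpi^{-n}a_{\theta,T}\mathrm{Tr}(w_\alpha g_1)
+\varpi^{-n}a_{\theta,T}\mathrm{Tr}(w_\alpha g_2)
+a_{\theta,T}\mathrm{Tr}(w_\alpha t_2^{-1}g_1t_2 g_2).
\]
The third summand lies in $\mathfrak{o}$, so $\psi$ sends it to $1$. Combining this with the multiplicativity of $\theta$ on $T$ yields $\chi_{\theta,T}(xy)=\chi_{\theta,T}(x)\chi_{\theta,T}(y)$.

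There is no real obstacle: the content is just the observation that $w_\alpha$ belongs to the abelian torus $T$, which simultaneously trivializes the conjugation of $g_1$ by $t_2$ in the trace and ensures that the quadratic cross-term lives in $\mathfrak{o}$ after multiplying by $\varpi^{-n}$. Care is needed only to check that central factors of $t_2$ (which may have nonzero valuation) cause no trouble, but this is immediate since conjugation by $Z$ is trivial on $M_2(F)$.
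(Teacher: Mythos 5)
Your proof is correct and follows essentially the same route as the paper's: the crucial observations — that $w_\alpha$ lies in the abelian torus $T$ so that $\mathrm{Tr}(w_\alpha t^{-1}gt)=\mathrm{Tr}(w_\alpha g)$ by cyclicity, and that the quadratic cross-term carries an extra factor of $\varpi^n$ and is therefore killed by $\psi$ — are exactly the content of the paper's two-step argument (the relation $\chi_{\theta,T}(kt)=\chi_{\theta,T}(t)\chi_{\theta,T}(k)=\chi_{\theta,T}(tk)$ plus multiplicativity on $K(n)$), merely bundled into a single direct expansion. Your explicit well-definedness check (agreement on $T\cap K(n)$, recovering Lemma~\ref{lemmatexitence}) spells out what the paper records in one sentence immediately after Definition~\ref{d:chipi}, so it is not a genuinely different approach.
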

\begin{proof}
First, we claim that for all $k\in K(n)$, $t\in T$,
\begin{equation}\label{e:chikn}\chi_{\theta, T}(kt)=\chi_{\theta, T}(t)\chi_{\theta, T}(k)=\chi_{\theta, T}(tk).\end{equation}
To prove \eqref{e:chikn}, write $k = 1+ \varpi^n g$ and let $g' = t^{-1}g t$ so that $kt = t (1 + \varpi^n g')$.
Then
\begin{align*}
\chi_{\theta, T}(kt)&=\chi_{\theta, T}(t)\chi_{\theta, T}(1+\varpi^ng')\\&=\chi_{\theta, T}(t)\psi(\varpi^{-n}a_{\theta, T} \mathrm{Tr} (w_\alpha t^{-1} gt))\notag\\
&=\chi_{\theta, T}(t)\psi(\varpi^{-n}a_{\theta, T} \mathrm{Tr} (t^{-1} w_\alpha  gt))\notag\\
&=\chi_{\theta, T}(t)\psi(\varpi^{-n}a_{\theta, T} \mathrm{Tr} ( w_\alpha  g))\notag\\
&=\chi_{\theta, T}(t)\chi_{\theta, T}(k).\notag
\end{align*}
Next note that $\chi_{\theta, T}$ is multiplicative on the group $K(n)$ by using that $a(\theta)=2n$.
The multiplicativity of $\chi_{\theta, T}$ follows immediately by combining this fact with \eqref{e:chikn}.
\end{proof}

\begin{remark}Using $ZK_T(n) = TB_1(n)$, we can also write $\chi_{\theta, T}$ as
\begin{equation}
\chi_{\theta, T}\left(t \mat{ y}{\varpi^n x}{0}{1}\right) = \theta\left(t\left(1 + \sqrt{-\alpha}\varpi^n x/2\right)\right), \quad x \in \OF, y \in U_n.
\end{equation}
Further one can define the character $\chi_{\theta, T}$ on $Z K_T(n)$ directly  in terms of the entries of the matrix as follows:   $$\chi_{\theta, T}\left(\mat{a}{b}{c}{d}\right) = \theta\left((a \sqrt{-\alpha}+c)(ab\alpha + cd - 2\sqrt{-\alpha}(ad-bc)\right).$$

\end{remark}

By \cite{kutzko1} (see \cite{BH06} for a recent treatment), the supercuspidal representations of $G$ are obtained by compact induction from subgroups that are compact modulo $Z$. Precisely, let $\pi$ be an irreducible supercuspidal representation of $G$. Then there exists a maximal compact-mod-center subgroup $ZK$ of $G$, and an irreducible complex representation $\xi$ of $ZK$, such that $\pi \simeq c-\Ind_{ZK}^{G} \xi$ where $c-\Ind$ denotes compact induction \cite[15.5, 15.8]{BH06}. As shown in \cite{MR0507254}, the representation $\xi$ is itself induced from a smaller representation which is often one-dimensional. In the special case we are interested in, one can make all this very simple and explicit.
\begin{proposition}\label{p:existenceind}Let $\pi$ be a supercuspidal representation of $G$ with trivial central character satisfying $a(\pi) = 4n$ for some positive integer $n$.
There exists a character $\theta_\pi$ of $E^\times$  with $a(\theta_\pi)  = 2n$ and $\theta_\pi|_{F^\times} = 1$ such that for any inert torus $T$ in canonical form, we have
\begin{equation*}
\pi\simeq c-\Ind_{ZK_T(n)}^{G}\chi_{\theta_\pi, T}.
\end{equation*}
\end{proposition}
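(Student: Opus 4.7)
The plan is to realise $\pi$ as a compact induction from $ZK$, identify its minimal $K$-type as a character of $K(n)$, and then match this character against the formula of Definition \ref{d:chipi}. The hypotheses $a(\omega_\pi)=0<2n=a(\pi)/2$ combined with Lemma \ref{l:mini} show that $\pi$ is twist-minimal. Kutzko's theorem (\cite{MR0507254}, cf.\ also \cite[\S 15]{BH06}) then yields $\pi\simeq \cInd_{ZK}^G\xi$ for some irreducible $\xi$ of $ZK$, and since $4\mid a(\pi)$ the classification shows that $\xi|_{K(n)}$ is generated by a nontrivial character $\psi_\pi$ of $K(n)/K(2n)$.

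Next, under the exponential identification $K(n)/K(2n)\cong \varpi^n M_2(\OF)/\varpi^{2n}M_2(\OF)$ given by $1+\varpi^n X\mapsto X$, together with the self-pairing $(X,Y)\mapsto \psi(\varpi^{-n}\Tr(XY))$, the character $\psi_\pi$ corresponds to a coset $\beta+M_2(\OF)\in \varpi^{-n}M_2(\OF)/M_2(\OF)$. The standard cuspidality criterion forces $F[\beta]$ to be a quadratic field extension of $F$, and twist-minimality combined with $4\mid a(\pi)$ forces $F[\beta]=E$, the unramified quadratic extension. After $K$-conjugation (which does not change the isomorphism class of $\pi$) we may therefore assume $\beta\equiv a_\pi\varpi^{-n}w_\alpha \pmod{M_2(\OF)}$ for some $a_\pi\in\OF^\times$, so that $\beta$ lies in the Lie algebra of the canonical-form inert torus $T=T_{\alpha,0,1}$. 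A direct matrix computation identifies the $K$-stabiliser of $\psi_\pi$ as exactly $K_T(n)=T(\OF)K(n)$, and comparison with Lemma \ref{lemmatexitence} (taking $a_{\theta,T}=a_\pi$) shows that $\psi_\pi$ agrees with the restriction to $K(n)$ of $\chi_{\theta,T}$ for a unique character $\theta$ of $T(\OF)\cong \OF_E^\times$ with $a(\theta)=2n$.

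To promote $\theta$ to a character $\theta_\pi$ of $E^\times$, note that $T(\OF)$ stabilises the $\psi_\pi$-isotypic line in $\xi$ and acts on it by a character extending $\theta$; triviality on $Z$ (forced by $\omega_\pi=1$) extends this further to a character on $Z\cdot T(\OF)$. Because $E/F$ is unramified, the identification $T \cong E^\times$ of \eqref{eq:explicitisotorusCanonical} restricts to $Z \cdot T(\OF) \cong F^\times \cdot \OF_E^\times = E^\times$, so the character just constructed is the pullback of a unique $\theta_\pi$ on $E^\times$ satisfying $\theta_\pi|_{F^\times}=1$ and $a(\theta_\pi)=2n$. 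Finally, Frobenius reciprocity yields an embedding $\pi\hookrightarrow \cInd_{ZK_T(n)}^G\chi_{\theta_\pi,T}$, and Mackey's intertwining criterion --- whose hypothesis is the regularity of $\theta_\pi$ (i.e., $\theta_\pi\neq\theta_\pi\circ\sigma$ for the nontrivial $\sigma\in\Gal(E/F)$, which follows from $a(\theta_\pi)=2n\geq 2$ together with $\theta_\pi|_{F^\times}=1$) --- upgrades this embedding to an isomorphism.

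The independence of $\theta_\pi$ from the particular canonical-form torus $T$ is a direct consequence of Proposition \ref{p:inertori}(3): any two such tori are conjugate by some $a(y)$ with $y\in\OF^\times$, and the construction above is manifestly equivariant under this conjugation. The main obstacle will be the Mackey intertwining step, which requires a careful double-coset analysis of $ZK_T(n)\backslash G/ZK_T(n)$ and explicit evaluation of $\chi_{\theta_\pi,T}$ against its $g$-conjugates --- this is the standard but delicate ``dihedral compact induction'' calculation underlying the type-theoretic classification in the unramified case.
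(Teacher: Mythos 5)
Your argument is correct and follows essentially the same route as the paper: realize $\pi$ by compact induction from a cuspidal type on $ZK_T(n)$ via the Kutzko/Bushnell--Henniart framework, with twist-minimality (from Lemma \ref{l:mini}) and the condition $4\mid a(\pi)$ pinning down the unramified stratum data. The only real difference is expository --- the paper simply cites \cite[19.3, 20.2]{BH06} for the parametrization of $\pi$ by an admissible pair $(E/F,\chi)$ with $l(\chi)=2n-1$, whereas you reconstruct the underlying type-theoretic argument from scratch. Two imprecisions worth flagging. First, the self-pairing you write, $(X,Y)\mapsto\psi(\varpi^{-n}\Tr(XY))$ on $\varpi^n M_2(\OF)/\varpi^{2n}M_2(\OF)$, is identically trivial as stated; the correct duality pairs $1+\varpi^n g$ with $\beta\in\varpi^{-n}M_2(\OF)/M_2(\OF)$ via $\psi(\Tr(\beta g))$, which matches Definition \ref{d:chipi} on taking $\beta = a_{\theta,T}\varpi^{-n}w_\alpha$. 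Second, the claim that $\psi_\pi$ alone determines a unique $\theta$ on all of $T(\OF)$ is too strong: $\psi_\pi$ determines $\theta$ only on $T(\OF)\cap K(n)=1+\varpi^n\OF_E$, and the extension to $T(\OF)$ comes from the isotypic-line action you invoke in the following paragraph --- a step that tacitly uses that the $\psi_\pi$-isotypic space in $\xi$ is genuinely one-dimensional, which holds here precisely because $l(\chi)=2n-1$ is odd (equivalently $4\mid a(\pi)$), avoiding the Heisenberg/Weil construction needed when the level is even. Since you explicitly flag the Mackey double-coset analysis as the remaining delicate step rather than asserting it, your plan matches what is actually carried out in \cite[\S19]{BH06}.
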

\begin{proof}This follows from the results of Kutzko \cite{kutzko1, MR0507254} but for our purposes it is more convenient to appeal to the treatment in \cite[Chapter 5]{BH06}. As $q$ is odd, $\pi$ is associated to a pair $(E/F, \chi)$ as in \cite[20.2]{BH06} and the assumption $a(\pi)=4n$ means that  the quadratic extension $E/F$  is unramified (and hence coincides with our setup) and furthermore that the integer $l(\chi)$ (in the terminology of \cite{BH06}) equals $2n-1$. Hence, defining $\theta_\pi=\chi$, the result follows from \cite[19.3]{BH06} (note that our character $\chi_{\theta_\pi, T}$ is denoted $\Lambda$ there).
\end{proof}

\begin{definition}\label{def:chipi}Given a supercuspidal representation $\pi$ of $G$ with  trivial central character satisfying $a(\pi) = 4n$ for some positive integer $n$, and an inert torus $T$ in canonical form, we let  $\chi_{\pi}$ denote the character $\chi_{\theta_\pi, T}$ of $ZK_T(n)$. Thus, \begin{equation*}
\pi\simeq c-\Ind_{ZK_T(n)}^{G}\chi_{\pi}.
\end{equation*}
\end{definition}

\begin{remark}\label{rmk:theta-vs-xi}
As $q$ is odd, and $a(\pi)$ is even, the representation $\pi$ is a dihedral supercuspidal representation associated to a character $\xi_\pi$ of $E^\times$ (see, e.g., \cite[Sec. 1.2]{ralfnewform}). Therefore, it is natural to ask for the relation between the characters $\theta_\pi$ and $\xi_\pi$. This is given by $\theta_\pi= \xi_\pi \eta_E$ where $\eta_E$ is the unique unramified  quadratic character on $E^\times$; see \cite[34.4]{BH06}.
\end{remark}

\begin{proposition}\label{p:existencewalds}Let $\pi$, $\theta_\pi$ be as in Proposition \ref{p:existenceind} and let $T$, $\chi_\pi$ be as in Definition \ref{def:chipi}.
Then there exists a unique up to multiples element $v\in \pi$ such that
\begin{equation}\label{e:optimaldef}
\pi(k)v=\chi_\pi(k)v, \text{\ for\ }k\in ZK_T(n).
\end{equation}
In particular $v$ is a $(T,\theta_\pi)$-eigenvector.
\end{proposition}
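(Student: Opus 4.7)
The plan is to work in the explicit compact-induction realization $\pi \simeq \cInd_H^G \chi_\pi$ with $H = ZK_T(n)$ provided by Proposition \ref{p:existenceind} together with Definition \ref{def:chipi}. In that model, $\pi$ is the space of smooth functions $f : G \to \C$ satisfying $f(hg) = \chi_\pi(h) f(g)$ for all $h \in H$, $g \in G$, with compact support modulo $H$, and $G$ acts by right translation $(\pi(g_0) f)(g) = f(g g_0)$. The condition \eqref{e:optimaldef} then reads $f(gk) = \chi_\pi(k) f(g)$ for all $g \in G$, $k \in H$.

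For existence, I would simply exhibit the vector $v_0 \in \pi$ which vanishes off $H$ and satisfies $v_0(h) = \chi_\pi(h)$ for $h \in H$. This is a well-defined element of $\cInd_H^G \chi_\pi$ thanks to the multiplicativity of $\chi_\pi$ recorded in Lemma \ref{keylemmamatrix}. Given $k \in H$, one has $(\pi(k) v_0)(g) = v_0(gk)$; since $gk \in H$ iff $g \in H$, separating the two cases gives $(\pi(k) v_0)(g) = \chi_\pi(k) v_0(g)$ on all of $G$, verifying \eqref{e:optimaldef}. For uniqueness, the quickest route is Frobenius reciprocity for compact induction, which states that $\cInd_H^G$ is left adjoint to restriction on smooth representations; combined with Schur's lemma this gives
$$\dim \Hom_H\bigl(\chi_\pi, \pi|_H\bigr) = \dim \Hom_G\bigl(\cInd_H^G \chi_\pi, \pi\bigr) = \dim \Hom_G(\pi, \pi) = 1,$$
since $\pi$ is irreducible. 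Thus $v_0$ is unique up to scalar multiples.

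The final assertion that $v_0$ is a $(T, \theta_\pi)$-eigenvector is then automatic: one has $T = Z \cdot T(\OF) \subset Z K_T(n) = H$ because $T(\OF) \subset K_T(n)$, and evaluating the formula of Definition \ref{d:chipi} with trivial unipotent part yields $\chi_\pi(t) = \theta_\pi(t)$ for each $t \in T$. Hence $\pi(t) v_0 = \chi_\pi(t) v_0 = \theta_\pi(t) v_0$. There is essentially no serious obstacle in this argument; all the substantive content has already been absorbed into Proposition \ref{p:existenceind}, which packages the type-theoretic classification of dihedral supercuspidals from \cite{BH06} into the explicit pair $(H, \chi_\pi)$.
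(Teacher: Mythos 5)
Your existence argument is essentially identical to the paper's: both exhibit the canonical element of $\cInd_{ZK_T(n)}^G \chi_\pi$ supported on $ZK_T(n)$ and check the transformation law directly, with Lemma \ref{keylemmamatrix} guaranteeing well-definedness.

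For uniqueness you take a genuinely different route. The paper invokes the Saito--Tunnell multiplicity-one result (the space of $(T,\theta_\pi)$-eigenvectors has dimension at most one), a deep external input already quoted earlier in the section; your eigenvector, being a $(ZK_T(n),\chi_\pi)$-eigenvector, lies in that a priori larger space, and so is unique. You instead apply Frobenius reciprocity for compact induction from the open subgroup $H = ZK_T(n)$ together with Schur's lemma, obtaining $\dim\Hom_H(\chi_\pi,\pi|_H) = \dim\Hom_G(\cInd_H^G\chi_\pi,\pi) = \dim\Hom_G(\pi,\pi) = 1$. This is correct ($\cInd_H^G$ is left adjoint to restriction precisely because $H$ is open), and it is more self-contained: it proves exactly the stated uniqueness from first principles of smooth representation theory, without appealing to the torus-period multiplicity-one theorem. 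The trade-off is that your argument establishes uniqueness only within the smaller $H$-eigenspace, whereas the paper's appeal to Saito--Tunnell gives for free the stronger fact that the $(T,\theta_\pi)$-eigenspace itself is one-dimensional -- a fact the paper reuses implicitly elsewhere (e.g.\ in the uniqueness discussion following Definition \ref{def:waldsopt}). For the proposition as stated, both routes suffice and yours is the more economical.
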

\begin{proof}
Recall that any element in $\pi = c-\Ind_{ZK_T(n)}^{G}\chi_\pi$ is a function $\phi$ on $G$ such that $\phi(kg)=\chi_\pi(k)\phi(g)$ for $k\in ZK_T(n)$, with the group $G$ acting by right translation. In particular we can take $\phi$ to be the function
\begin{equation}\label{eq:optiWaldsVCompactmodel}
\phi(g)=\begin{cases}
\chi_\pi(g),&\text{\ if \ }g\in ZK_T(n);\\
0, &\text{\ otherwise.}
\end{cases}
\end{equation}
Then it's clear that
\begin{equation}
\pi(k)\phi=\chi_\pi(k)\phi, \text{\ for\ }k\in ZK_T(n).
\end{equation}
The uniqueness assertion follows from the general fact that the
space of $(T, \theta)$-eigenvectors has dimension at most 1.
\end{proof}
The above Proposition allows us to make the following definition.
\begin{definition}\label{def:waldsopt}Let $\pi$ be a
  supercuspidal representation of trivial central character
  satisfying $a(\pi) = 4n$ for some positive integer $n$.  By a
  \emph{minimal vector} in $\pi$, we will mean a nonzero vector
  satisfying \eqref{e:optimaldef} for some inert torus $T$ in
  canonical form.
\end{definition}

As we have seen, minimal vectors exist. In fact, whenever we fix
an inert torus $T$ in canonical form, there is a unique up to
multiples $(T, \theta_\pi)$-eigenvector; we will call
such a vector a \emph{minimal vector for $T$}. By
part (2) of Proposition \ref{p:inertori}, it follows that the set of all
minimal vectors (without fixing $T$) lie in a single $A(\OF^\times)$-orbit.

As it turns out, minimal vectors have remarkable properties
which make them extremely special.  Indeed (as pointed out in
the introduction) a minimal vector may be viewed as the true
non-archimedean analogue of the lowest weight vectors in
(archimedean) holomorphic discrete series representations. As
shown in Section \ref{s:mc}, the matrix coefficient associated
to such a vector has the amazing property of being a
\emph{character} on the support. This implies that the
minimal vectors are those for which the associated matrix
coefficient function has smallest possible support. Another
important reason for singling out the $(T, \theta)$-eigenvectors
associated to the character $\theta=\theta_\pi$ is that the conductor
of the degree 4 $L$-function
$L(s, \pi \otimes \mathcal{AI}(\theta^{-1}))$ is smallest when
$\theta = \theta_\pi$.
\subsection{Main results}
For the rest of Section \ref{sec:localcalcs}, we let $\pi$ be a supercuspidal representation of trivial central character satisfying $a(\pi) = 4n$ for some positive integer $n$. Note that such a representation is automatically unitary. Our first result shows that minimal vectors have a remarkably simple description in the Whittaker model.

\begin{proposition}\label{Whittakermain}Let $W_0 \in
  \mathcal{W}(\pi,\psi)$be
   a minimal vector (with respect to some inert torus $T$ in canonical form) in the Whittaker model of $\pi$. Then 
the restriction of $W_0$ to $A$ is, for $\a=-a_{\theta, T}\alpha$ and some normalizing constant $c \in \C$, given by
$$W_0(a(y)) =\begin{cases} c & \text{ if } y \in \varpi^{-2n}\a U_n \\ 0& \text{ otherwise.} \end{cases}$$
\end{proposition}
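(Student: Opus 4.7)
The plan is to derive both the support condition and the constancy of $W_0$ on $A$ by combining two identities. First, as an element of $\mathcal{W}(\pi,\psi)$, $W_0$ satisfies $W_0(n(x)g)=\psi(x)W_0(g)$; second, by Definition \ref{def:waldsopt}, $W_0$ satisfies $W_0(gk)=\chi_\pi(k)W_0(g)$ for all $k\in ZK_T(n)$ (the right-translation form of $\pi(k)W_0=\chi_\pi(k)W_0$). The strategy is to plug in two convenient types of elements $k\in K_T(n)$: unipotent elements $n(\varpi^n x)\in N\cap K(n)$, which will force the support condition, and diagonal elements $a(u)$ with $u\in U_n$, which will pin down constancy along the support.

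For the support, fix $x\in\OF$ and take $k=n(\varpi^n x)$, which lies in $K(n)\subset K_T(n)$. On the one hand, $a(y)k=n(y\varpi^n x)a(y)$, so the Whittaker property gives $W_0(a(y)k)=\psi(y\varpi^n x)W_0(a(y))$. On the other hand, $W_0(a(y)k)=\chi_\pi(k)W_0(a(y))$, and Definition \ref{d:chipi} applied with $t=1$ and $g=\mat{0}{x}{0}{0}$ (so that $\mathrm{Tr}(w_\alpha g)=-\alpha x$) yields $\chi_\pi(n(\varpi^n x))=\psi(-\varpi^{-n}a_{\theta_\pi,T}\alpha x)$. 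Comparing the two expressions for every $x\in\OF$ and using that $\psi$ has conductor $\OF$, we see that $W_0(a(y))\neq 0$ forces $y\varpi^n+\varpi^{-n}a_{\theta_\pi,T}\alpha\in\OF$. Dividing through by $\varpi^{-2n}\a$, with $\a:=-a_{\theta_\pi,T}\alpha\in\OF^\times$, this is exactly the condition $y\in\varpi^{-2n}\a U_n$.

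For the constancy, write $y=\varpi^{-2n}\a u$ with $u=1+\varpi^n v\in U_n$. Then $a(y)=a(\varpi^{-2n}\a)\,a(u)$ and $a(u)\in K_T(n)$, so $W_0(a(y))=\chi_\pi(a(u))W_0(a(\varpi^{-2n}\a))$. Writing $a(u)=1+\varpi^n\mat{v}{0}{0}{0}$ and noting that $\mathrm{Tr}\!\left(w_\alpha\mat{v}{0}{0}{0}\right)=0$, Definition \ref{d:chipi} (with $t=1$) gives $\chi_\pi(a(u))=1$. Hence $W_0(a(y))$ is independent of $u$, and one takes $c:=W_0(a(\varpi^{-2n}\a))$. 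The proof is essentially routine; the only mild subtlety is being careful with conventions (right-translation action, and the decomposition $ZK_T(n)=TK(n)$ underlying Definition \ref{d:chipi}), both of which are immediate checks.
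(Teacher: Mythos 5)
Your proof is correct and takes a genuinely different route from the paper's. The paper defines an explicit intertwining operator from the compact induction model $c-\Ind_{ZK_T(n)}^{G}\chi_\pi$ to $\mathcal{W}(\pi,\psi)$ and directly evaluates the image of the distinguished element (the extension-by-zero of $\chi_\pi$) on $A$ by unwinding the defining integral; this simultaneously yields the support condition, the constancy, and the nonvanishing of the constant. You instead work entirely inside the Whittaker model, playing the functional equation $W_0(n(x)g)=\psi(x)W_0(g)$ off against the eigenvector property $W_0(gk)=\chi_\pi(k)W_0(g)$: first with unipotent $k=n(\varpi^n x)\in K(n)$ to force the support into $\varpi^{-2n}\a U_n$, and then with diagonal $k=a(u)$, $u\in U_n$, to get constancy there. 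This is shorter and more structural than the paper's computation, and your evaluations of $\chi_\pi$ on these elements via Definition \ref{d:chipi} are correct. The one thing your route does not establish is that $c=W_0(a(\varpi^{-2n}\a))$ is nonzero, which the paper's explicit computation delivers for free; that nonvanishing is what makes $c$ a genuine normalizing constant and is relied on in the subsequent corollaries. It follows at once from the Kirillov model of the supercuspidal $\pi$ (if $W_0|_{A}\equiv 0$ then $W_0=0$, contradicting that a minimal vector is nonzero), so it is worth a sentence.
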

\begin{proof}
We define an intertwining operator from $c-\Ind_{ZK_T(n)}^{G}\chi_{\pi}$ to $\mathcal{W}(\pi,\psi)$ via
\begin{equation}
\phi \mapsto W_\phi(g)=\int\limits_{F}\phi(\mat{-\frac{\varpi^{2n}}{a_{\theta, T}\alpha}}{0}{0}{1}\mat{1}{x}{0}{1}g)\psi(-x)dx.
\end{equation}
To see that the operator above is non-trivial, we compute directly the special values of the Whittaker function for the minimal vector, as defined in \eqref{eq:optiWaldsVCompactmodel}.
In particular
\begin{equation}
W_0(a(y))=\int\limits_{F}\phi(\mat{-\frac{\varpi^{2n}}{a_{\theta, T}\alpha}}{0}{0}{1}\mat{1}{x}{0}{1}\mat{y}{0}{0}{1})\psi(-x)dx.
\end{equation}
Recall that $\phi(g) = 0$ unless $g \in Z K_T(n)$. So to ensure that
$$ \mat{-\frac{\varpi^{2n}}{a_{\theta, T}\alpha}}{0}{0}{1}\mat{1}{x}{0}{1}\mat{y}{0}{0}{1}=\mat{-y\frac{\varpi^{2n}}{a_{\theta, T}\alpha}}{-x\frac{\varpi^{2n}}{a_{\theta, T}\alpha}}{0}{1}\in ZK_T(n), $$
we need $v(y)=-2n$, $-y\frac{\varpi^{2n}}{a_{\theta, T}\alpha}-1\in \p^n$ and $-x\frac{\varpi^{2n}}{a_{\theta, T}\alpha}\in \p^n$. The conditions on $y$ gives $y\in -\varpi^{-2n} a_{\theta, T}\alpha U_n$.  Thus $W_0(a(y))=0$ if $y\notin -\varpi^{-2n} a_{\theta, T}\alpha U_n$. On the other hand if $y\in-\varpi^{-2n} a_{\theta, T}\alpha U_n$,
$$ \mat{-y\frac{\varpi^{2n}}{a_{\theta, T}\alpha}}{-x\frac{\varpi^{2n}}{a_{\theta, T}\alpha}}{0}{1}=I_2-\varpi^n \mat{\varpi^{-n}+y\frac{\varpi^{n}}{a_{\theta, T}\alpha}}{x\frac{\varpi^{n}}{a_{\theta, T}\alpha}}{0}{0}.$$
By definition of $\phi$ in \eqref{eq:optiWaldsVCompactmodel} and Definition \ref{d:chipi},
\begin{align}
W_0(a(y))&=\int\limits_{v(x)\geq -n} \psi\circ\Tr(-\varpi^{-n}\mat{0}{a_{\theta, T}}{-a_{\theta, T}\alpha}{0}\mat{y\frac{\varpi^{n}}{a_{\theta, T}\alpha}+\varpi^{-n}}{x\frac{\varpi^{n}}{a_{\theta, T}\alpha}}{0}{0})\psi(-x)dx\\
&=\int\limits_{v(x)\geq -n} \psi\left(\Tr\mat{0}{0}{y+\varpi^{-2n}a_{\theta, T}\alpha}{x}\right)\psi(-x)dx \notag\\
&=\int\limits_{v(x)\geq -n} \psi(x)\psi(-x)dx\notag
\end{align}
is a non-zero constant independent of $y$ in the support.
\end{proof}

\begin{remark}Recall that different inert tori in canonical form are $A(\OF^\times)$ conjugate. Moreover, it is well known that a vector in  $\mathcal{W}(\pi,\psi)$ is uniquely specified by its restriction to $A$ (the so-called Kirillov model). Therefore, Proposition \ref{Whittakermain} gives us an alternative way to characterize minimal vectors: \emph{these are precisely those vectors which in the Kirillov model are equal to the characteristic function of $\varpi^{-2n}a U_n$ for some $a \in \OF^\times$.}\end{remark}

\begin{remark}Using \eqref{e:norminnerwhit}, it is clear that one can pick $c=\left(|\OF^\times/U_n|\right)^{1/2}$ in Proposition \ref{Whittakermain} for $W_0$ to be $L^2-$normalized.
\end{remark}
 Proposition \ref{Whittakermain} has some key consequences which will be crucial for our global results.
 \begin{corollary}\label{cor:whit}Let $T$ be an inert torus in canonical form and let $W_0 \in \mathcal{W}(\pi,\psi)$ be a minimal vector for $T$ in the Whittaker model. Let $\a \in \OF^\times$ be as in Proposition \ref{Whittakermain}. Let $g \in G$ and (using Proposition \ref{p:inertori}) write $g = \mat{y}{m}{}{1}t$ for $t \in T$, $y \in F^\times$, $m \in F$. Then  we have \begin{equation}\label{formulawhittaker}\frac{W_0(g)}{\langle W_0, W_0 \rangle^{1/2}} = |\OF^\times / U_n|^{1/2} \times \begin{cases} \theta_\pi(t) \psi(m)  & \text{ if } y \in \varpi^{-2n}\a U_n \\ 0& \text{ otherwise.} \end{cases}\end{equation}
\end{corollary}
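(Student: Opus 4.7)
The plan is to reduce the evaluation of $W_0(g)$ to values of the form $W_0(a(y))$, where Proposition \ref{Whittakermain} applies directly, and then pin down the normalizing constant by a short inner product computation.

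First, using $\mat{y}{m}{}{1} = n(m)\, a(y)$, I would rewrite $g = n(m)\, a(y)\, t$. The defining Whittaker transformation law $W_0(n(x)h) = \psi(x) W_0(h)$ gives immediately
\begin{equation*}
W_0(g) = \psi(m)\, W_0(a(y)\, t).
\end{equation*}
Next, I would exploit the torus equivariance of the minimal vector. Since $T$ is in canonical form we have $T = Z\, T(\OF) \subset Z K_T(n)$, and from Definition \ref{d:chipi} (taking the argument of the $\psi$ factor to be zero) we have $\chi_\pi\bigl|_T = \theta_\pi$. Thus Proposition \ref{p:existencewalds}, transported to the Whittaker model where $\pi$ acts by right translation, specializes to $W_0(h\, t') = \theta_\pi(t')\, W_0(h)$ for all $h \in G$ and $t' \in T$. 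Applying this at $h = a(y)$ and $t' = t$ and combining with the previous display,
\begin{equation*}
W_0(g) = \psi(m)\, \theta_\pi(t)\, W_0(a(y)).
\end{equation*}

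At this point Proposition \ref{Whittakermain} supplies $W_0(a(y)) = c \cdot \mathbf{1}_{\varpi^{-2n}\a U_n}(y)$ for some constant $c \in \C^\times$, so
\begin{equation*}
W_0(g) = c\, \psi(m)\, \theta_\pi(t)\, \mathbf{1}_{\varpi^{-2n}\a U_n}(y).
\end{equation*}
For the normalization, I would compute the inner product \eqref{e:norminnerwhit} using the same support information:
\begin{equation*}
\langle W_0, W_0\rangle = \int_{F^\times} |W_0(a(y))|^2\, d^\times y = |c|^2 \cdot \operatorname{vol}(\varpi^{-2n}\a U_n, d^\times y) = \frac{|c|^2}{[\OF^\times : U_n]},
\end{equation*}
since $\varpi^{-2n}\a U_n$ is a multiplicative translate of $U_n$ and $d^\times y$ assigns $\OF^\times$ volume $1$. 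Absorbing the unit phase of $c$ into the (a priori only up to scalar) choice of $W_0$ so that $c > 0$, dividing yields the claimed prefactor $|\OF^\times / U_n|^{1/2}$ times $\theta_\pi(t)\,\psi(m)$ on the support, and $0$ off it.

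There is really no obstacle here: the corollary is essentially bookkeeping on top of Proposition \ref{Whittakermain} and the $T$-eigenvector property of the minimal vector. The only points that deserve care are verifying $T \subset Z K_T(n)$ (so that the minimal vector relation can be evaluated on $T$) and the observation that uniqueness of the decomposition $g = \mat{y}{m}{}{1} t$ follows from $B_1 \cap T = \{I\}$, which justifies the well-posedness of the identity.
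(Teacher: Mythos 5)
Your proof is correct and follows the same route as the paper: the paper's proof of the corollary is simply ``This is immediate as $W_0$ is a $(T,\theta_\pi)$-eigenvector,'' and your argument is exactly the spelled-out version of that one line (the Whittaker transformation law for the $N$-part, the $T$-eigenvector identity for the torus part, Proposition \ref{Whittakermain} for the $A$-part, and the inner product computation for the normalization). The only caveat, which you correctly flag, is that the claimed formula fixes the phase of $W_0$, so one must choose the minimal vector so that the constant $c$ in Proposition \ref{Whittakermain} is positive; the paper's subsequent remark implicitly makes the same choice.
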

\begin{proof}This is immediate as $W_0$ is a $(T, \theta_\pi)$-eigenvector.
\end{proof}

\begin{corollary}Let $W_0 \in \mathcal{W}(\pi,\psi)$ be a minimal vector in the Whittaker model of $\pi$. Then $$\frac{\sup_{g \in G} |W_0(g)|}{\langle W_0, W_0 \rangle^{1/2}} \asymp q^{n/2}.$$
\end{corollary}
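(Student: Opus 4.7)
My plan is to deduce this directly from Corollary \ref{cor:whit}, which already computes the normalized Whittaker function at every point of $G$ via the decomposition $g = a(y)n(m)t$ with $t \in T$. The key observation is that in that formula the only non-constant factor is $\theta_\pi(t)\psi(m)$, which is a product of unitary characters and hence has absolute value exactly one on the support.

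The upper bound thus comes for free. By Proposition \ref{p:inertori}(4) every $g \in G$ admits such a decomposition, so Corollary \ref{cor:whit} yields
\[
\frac{|W_0(g)|}{\langle W_0, W_0\rangle^{1/2}} \le |\OF^\times/U_n|^{1/2}
\]
for all $g \in G$, with equality whenever $y \in \varpi^{-2n}\a U_n$. Taking, e.g., $g = a(\varpi^{-2n}\a)$ shows the bound is attained, giving the matching lower bound. The sup-norm ratio is therefore exactly $|\OF^\times/U_n|^{1/2}$.

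Finally, I would note that $|\OF^\times/U_n| = (q-1)q^{n-1} \asymp q^{n}$, so $|\OF^\times/U_n|^{1/2} \asymp q^{n/2}$, completing the proof. Alternatively, one can bypass Corollary \ref{cor:whit} and argue ``by hand'': Proposition \ref{Whittakermain} gives $W_0|_A = c \cdot \mathbf{1}_{\varpi^{-2n}\a U_n}$, so $\langle W_0, W_0\rangle = |c|^2 \operatorname{vol}(U_n) = |c|^2/|\OF^\times/U_n|$ by our normalization $\operatorname{vol}(\OF^\times)=1$, and the restriction of $W_0$ to $A$ already achieves the sup since the extension by equivariance under $T$ and $N$ only multiplies values by unit complex numbers. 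Either route makes the estimate immediate.

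There is essentially no obstacle here: all the content is contained in the characterization of $W_0$ in the Kirillov model and the fact that eigencharacters take unit values. The only point to be careful about is verifying that every $g \in G$ really lies in $NAT$, which is exactly part (4) of Proposition \ref{p:inertori}, and that the asymptotics $|\OF^\times/U_n| \asymp q^n$ are uniform in $n$ (trivial for $n \ge 1$, and the statement is vacuous or trivial for $n=0$ since $\pi$ would not be supercuspidal).
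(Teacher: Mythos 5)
Your proof is correct and is exactly the paper's argument: the paper simply says the corollary ``is immediate from the previous Corollary,'' i.e., from the explicit formula in Corollary \ref{cor:whit}, whose absolute value on the support is the constant $|\OF^\times/U_n|^{1/2} \asymp q^{n/2}$. Your spelled-out version (including the alternative route via Proposition \ref{Whittakermain} and the Kirillov model) fills in precisely the details the paper leaves implicit.
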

\begin{proof}This is immediate from the previous Corollary.
\end{proof}

\begin{corollary}\label{localwsupportfinal}
Let $W_0 \in \mathcal{W}(\pi,\psi)$ be a minimal vector in the Whittaker model of $\pi$ and let $ k \in K$. Then there exists some $b \in \OF^\times/U_n$ such that
$$\frac{|W_0(a(y)k)|^2}{|\OF^\times/ U_{n}|}  = \begin{cases} \langle W_0, W_0 \rangle & \text{ if } y \in \varpi^{-2n}(b +
\p^n) \\ 0 &\text{ otherwise.}\end{cases}$$
\end{corollary}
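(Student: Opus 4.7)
The plan is to reduce the statement to a direct application of Corollary \ref{cor:whit} by exploiting the Iwasawa-type decomposition $K = B_1(\OF) T(\OF)$ supplied by Proposition \ref{p:inertori}(4).

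First, let $T$ be the inert torus in canonical form with respect to which $W_0$ is a minimal vector and fix $\a \in \OF^\times$ as in Proposition \ref{Whittakermain}. Given $k \in K$, I would apply Proposition \ref{p:inertori}(4) to write $k = b_1 t_0$ with $b_1 \in B_1(\OF)$ and $t_0 \in T(\OF)$. Since $b_1 \in B_1(\OF)$, I would further write $b_1 = n(x) a(u)$ with $x \in \OF$ and $u \in \OF^\times$. A one-line matrix computation then gives
\[
a(y) k \;=\; a(y) n(x) a(u) t_0 \;=\; n(xy)\, a(yu)\, t_0,
\]
which is exactly of the shape $\mat{y'}{m'}{}{1} t$ with $y' = yu$, $m' = xy$ and $t = t_0 \in T$.

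Next, I would substitute this decomposition into Corollary \ref{cor:whit}, obtaining
\[
\frac{W_0(a(y)k)}{\langle W_0, W_0\rangle^{1/2}}
\;=\; |\OF^\times/U_n|^{1/2}\,\times\,\begin{cases}\theta_\pi(t_0)\psi(xy) & \text{if } yu \in \varpi^{-2n}\a U_n,\\ 0 & \text{otherwise.}\end{cases}
\]
Taking squared absolute values kills the unimodular factor $\theta_\pi(t_0)\psi(xy)$, and dividing both sides by $|\OF^\times/U_n|$ yields
\[
\frac{|W_0(a(y)k)|^2}{|\OF^\times/U_n|} \;=\;
\begin{cases}\langle W_0, W_0\rangle & \text{if } yu \in \varpi^{-2n}\a U_n,\\ 0 & \text{otherwise.}\end{cases}
\]

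Finally, I would translate the support condition into the form stated in the corollary. The condition $yu \in \varpi^{-2n}\a U_n$ is equivalent to $y \in \varpi^{-2n}\, \a u^{-1}\, U_n$. Setting $b := \a u^{-1} \in \OF^\times$, and observing that for $b \in \OF^\times$ we have the identity $b\, U_n = b(1+\p^n) = b + \p^n$, the support becomes $y \in \varpi^{-2n}(b + \p^n)$, which is exactly the form in the statement. There is no essential obstacle; the only mild care needed is in verifying that the decomposition of $k$ chosen (with $t_0 \in T(\OF)$, $b_1 \in B_1(\OF)$) legitimately feeds into Corollary \ref{cor:whit}, and in checking the identification $bU_n = b + \p^n$ for units $b$. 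Both are straightforward.
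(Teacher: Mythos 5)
Your proposal is correct and takes essentially the same route as the paper: decompose $k \in K$ via Proposition \ref{p:inertori}(4) as a product of a lower-unipotent-free upper-triangular unit matrix and an element of $T(\OF)$, commute the resulting $a(y)$ past the $B_1(\OF)$ part, and invoke Corollary \ref{cor:whit}. The paper writes $k=\mat{z}{m}{}{1}t$ directly and reads off the support $y\in\varpi^{-2n}z^{-1}\a U_n$; your splitting $b_1 = n(x)a(u)$ and the final observation that $bU_n = b+\p^n$ for $b\in\OF^\times$ are just the same steps spelled out in slightly more detail.
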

\begin{proof}By assumption, $W_0$ is a $(T, \theta_\pi)$-eigenvector for some inert torus $T$ in canonical form. Using the last part of Proposition \ref{p:inertori}, we can write $k = \mat{z}{m}{}{1} t$ for $t \in T$, $z \in \OF^\times$, $m \in \OF$. So using Corollary \ref{cor:whit} we see that $\frac{|W_0(a(y)k)|^2}{|\OF^\times/ U_{n}|}$ equals $\langle W_0, W_0 \rangle$ if  $y \in \varpi^{-2n}z^{-1}\a U_n$ and equals 0 otherwise.
\end{proof}


\section{The QUE test vector property}\label{sec:test-vector-property}
Here we revisit
the discussion of
Section \ref{sec:intro-period-integrals-que}
in a local context,
and establish the local results
underlying the proof of Theorem \ref{thm:watson-ext}.

\subsection{Generalities}
We continue to use the notations of the previous section. In particular, the base field $F$ has odd residue characteristic (indeed, some of the results we will state below fail in the stated forms for even residual characteristic). Let $\pi_1, \pi_2, \pi_3$
be
generic irreducible unitary
representations
of $G$ with $\prod_{i=1}^3\omega_{\pi_i}=1$.
We assume that they arise
as local components of cuspidal automorphic representations;
this implies sufficient bounds towards temperedness
to give the absolute convergence
of the matrix coefficient integrals
\[
  \mathcal{H} : \pi_1 \otimes \pi_2
  \otimes \pi_3 \rightarrow \mathbb{C}
\]
\[
  \mathcal{H}(v_1,v_2,v_3)
  := \int_{g \in Z\bs G}
  \langle g v_1, v_1 \rangle
  \langle g v_2, v_2 \rangle
  \langle g v_3, v_3 \rangle
\]
for smooth vectors $v_i \in \pi_i$.
One calls $\pi_1 \otimes \pi_2 \otimes \pi_3$
\emph{distinguished}
if $\mathcal{H}$ is not identically zero.
By a result of Prasad \cite{Prasad:90a},
\begin{equation}\label{eq:prasad-criterion}
  \text{$\pi_1 \otimes \pi_2 \otimes \pi_3$ is distinguished}
  \iff
\eps(\pi_1 \otimes \pi_2 \otimes \pi_3,1/2) = 1.
\end{equation}

We focus here on the case in which
\begin{equation}
  \pi_1 = \overline{\pi_2} =: \pi
\end{equation}
and in which the conductor of $\pi$
is large compared to that of $\pi_3$.
This case is the relevant one when
considering the quantum unique ergodicity (QUE) problem for global automorphic
forms having $v \in \pi$ as a local component.
One then encounters,
after an application of Ichino's formula,
the local integrals
\begin{equation}\label{eq:H-local-integral-QUE}
  \mathcal{H}(v,\overline{v}, u),
\end{equation}
where
$u$ is an ``essentially fixed'' unit vector,
while
either the conductor of $\pi$ or the residue field cardinality
of $F$ tends off to $\infty$.  As explained at length in
\cite{NPS}, the size of
\begin{equation}\label{eq:normalized-local-QUE}
  C(\pi \otimes \overline{\pi })^{1/2}
  \mathcal{H}(v,\overline{v}, u)
\end{equation}
quantifies the relative difficulty of the QUE and subconvexity
problems.

When $a(\pi) = 1$ and $a(\pi_3) = 0$ and $v$ is a newvector,
it was shown in \cite{PDN-HQUE-LEVEL}
that the quantity
\eqref{eq:normalized-local-QUE}
has size $\asymp 1$.
This corresponds globally to the QUE and subconvexity problems
for a sequence of squarefree level newforms
having approximately equivalent difficulty.

It was observed in \cite{NPS} that
if
$a(\pi) \geq 2$, $a(\pi_3) = 0$ and $v \in \pi$
is an $L^2$-normalized
newvector,
then
\eqref{eq:normalized-local-QUE} is rather small; globally,
this says that the QUE
problem for newforms of powerful level is substantially \emph{easier} than the
corresponding subconvexity problem.
Related results were obtained in \cite{Hu:17a}
when $a(\pi_3) > 0$.

It is natural to ask whether the equivalence of difficulty in
the squarefree level case may be restored in the case of
powerful levels by choosing the test vector more carefully.
This was shown in \cite[Rmk 30, Rmk 50]{nelson-padic-que} when
$\pi$ belongs to the principal series
by taking for $v$ a ``$p$-adic microlocal lift.''
Below we address the case in which $\pi$ is supercuspidal,
assuming that its conductor
satisfies
the congruence condition from Section \ref{sec:localcalcs}. It turns out that a minimal vector works for this case.

\subsection{Matrix coefficients of minimal vectors}\label{s:mc}In this subsection, we assume that $\pi$ is a supercuspidal representation of $G$
with trivial central character and conductor of the form $a(\pi) = 4n$ for some positive
integer $n$. We look at the matrix coefficient associated to a minimal vector for $\pi$.

The matrix coefficients for representations before and after compact induction can be directly related; see, for example, \cite{KR14}. We briefly recall this relation. Let $H\subset G$ be an open and closed subgroup containing $Z$ with $H/Z$ compact. Let $\rho$ be an irreducible smooth representation of $H$ with unitary central character and $\pi=c-\Ind_H^G(\rho)$.  By the assumption on $H/Z$, $\rho$ is automatically unitarizable, and we shall denote the unitary pairing on $\rho$ by $\langle\cdot,\cdot\rangle_{\rho}$. Then one can define the unitary pairing on $\pi$ by
\begin{equation}\label{e:pairing}
  \langle\phi,\psi\rangle=\sum\limits_{x\in H\backslash G}\langle\phi(x),\psi(x)\rangle_{\rho}.
\end{equation}
If we let $y\in H\backslash G$ and $\{v_i\}$ be a basis for $\rho$, the elements
$$ f_{y,v_i}(g)=\begin{cases}
  \rho(h)v_i,&\text{\ if \ }g=hy\in Hy;\\
  0,&\text{\ otherwise.}
\end{cases}$$
form a basis for $\pi$.
\begin{lemma}\label{basislemmaapp}
  For $y,z\in H\backslash G$,
  \begin{equation}
    \langle \pi(g)f_{y,v_i},f_{z,v_j}\rangle=\begin{cases}
      \langle\rho(h)v_i,v_j\rangle_{\rho}, &\text{\ if\ }g=z^{-1}hy\in z^{-1}Hy;\\
      0,&\text{\ otherwise}.
    \end{cases}
  \end{equation}
\end{lemma}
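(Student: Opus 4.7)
The proof is a direct computation from the definitions, so the plan is really just to organize the bookkeeping. The key observation is that each basis vector $f_{y,v_i}$ has support contained in a single right coset of $H$ in $G$ (namely $Hy$), so only one term in the sum defining $\langle\cdot,\cdot\rangle$ can contribute.

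First I would determine the support of $\pi(g)f_{y,v_i}$. By the right-regular action, $(\pi(g)f_{y,v_i})(x) = f_{y,v_i}(xg)$, which is nonzero if and only if $xg \in Hy$, i.e., $x \in Hyg^{-1}$. Thus $\pi(g)f_{y,v_i}$ is supported on the single right coset $Hyg^{-1}$, while $f_{z,v_j}$ is supported on $Hz$. Two right cosets of $H$ are either equal or disjoint, and $Hyg^{-1}=Hz$ if and only if $zgy^{-1}\in H$, i.e., $g \in z^{-1}Hy$. If $g\notin z^{-1}Hy$, the two supports are disjoint and every term in the defining sum for the pairing vanishes, giving the second case of the lemma.

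Now suppose $g = z^{-1}hy$ with $h\in H$. Then $Hyg^{-1}=Hz$, and in the sum $\sum_{x\in H\backslash G}\langle(\pi(g)f_{y,v_i})(x),f_{z,v_j}(x)\rangle_\rho$ only the single coset represented by $x=z$ contributes. At that representative one has $zg = hy$, so $(\pi(g)f_{y,v_i})(z)=f_{y,v_i}(hy)=\rho(h)v_i$ and $f_{z,v_j}(z)=v_j$. This yields $\langle\rho(h)v_i,v_j\rangle_\rho$, as claimed. The only point to verify is that the answer is independent of the choice of coset representative: replacing $z$ by $h'z$ produces the factor $\rho(h')$ in both slots, and these cancel by the unitarity of $\rho$ (guaranteed by the compactness of $H/Z$ and the unitarity of the central character, which underlies the well-definedness of \eqref{e:pairing} in the first place). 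There is no substantive obstacle here; the only thing to be careful about is tracking the left/right side conventions so that the element $h=zgy^{-1}$ indeed appears on the left of $v_i$ in the final expression.
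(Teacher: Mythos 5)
Your proof is correct and follows exactly the route the paper intends; the paper's own proof simply states that the lemma is ``a direct consequence of \eqref{e:pairing} and the definition of our basis elements,'' and what you have written is precisely the bookkeeping (support of $\pi(g)f_{y,v_i}$ is the single coset $Hyg^{-1}$, disjointness or equality of cosets, evaluation at $x=z$, and the unitarity check for representative-independence) that makes that assertion a proof.
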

\begin{proof}
  This is a direct consequence of \eqref{e:pairing} and the definition of our basis elements.
\end{proof}

\begin{proposition}\label{keymatrixprop}Let $v_0$ be a minimal vector in $\pi$ and let $\Phi_0(g) =
  \frac{\langle
    \pi(g)v_0 ,  v_0\rangle}{\langle v_0 , v_0\rangle}$.
  Then, \begin{equation}\Phi_0(g) = \begin{cases} \chi_\pi(g) &\text{ if } g \in Z K_T(n), \\ 0& \text{ otherwise.} \end{cases}\end{equation}
\end{proposition}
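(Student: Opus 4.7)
The plan is to invoke the compact induction model for $\pi$ and read off the matrix coefficient directly from Lemma \ref{basislemmaapp}.

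First I would realize $\pi$ as $c\text{-Ind}_{ZK_T(n)}^{G} \chi_\pi$, which is available by Proposition \ref{p:existenceind} and Definition \ref{def:chipi}. Setting $H = ZK_T(n)$ and $\rho = \chi_\pi$ (viewed as a one-dimensional unitary representation of $H$, unitary because $\chi_\pi$ is a character), we have the explicit model of $\pi$ as functions $\phi\colon G \to \mathbb{C}$ of compact support modulo $H$ with the transformation $\phi(hg) = \chi_\pi(h)\phi(g)$, equipped with the pairing \eqref{e:pairing}. In this model, Proposition \ref{p:existencewalds} (and formula \eqref{eq:optiWaldsVCompactmodel}) identifies a minimal vector with the function $\phi_0$ supported on $H$ with $\phi_0|_H = \chi_\pi$. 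Since $\Phi_0$ is independent of the chosen scalar, I may take $v_0 = \phi_0 = f_{e,1}$ in the notation preceding Lemma \ref{basislemmaapp}, where $e$ denotes the trivial double coset representative and $1$ denotes the basis vector of the one-dimensional space of $\rho$.

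Next I would compute the normalizing constant $\langle v_0, v_0 \rangle$ from \eqref{e:pairing}: since $\phi_0$ is supported on $H$, only the identity coset contributes, giving $\langle v_0, v_0 \rangle = \langle 1,1\rangle_\rho = 1$. Then I would apply Lemma \ref{basislemmaapp} with $y = z = e$ and $v_i = v_j = 1$ to get
\[
  \langle \pi(g) v_0, v_0 \rangle \;=\; \begin{cases} \chi_\pi(g), & g \in H, \\ 0, & g \notin H, \end{cases}
\]
which is exactly the claimed formula after dividing by $\langle v_0, v_0 \rangle = 1$.

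There is no serious obstacle here; the one thing to be a little careful about is to verify that the pairing \eqref{e:pairing} is indeed well-defined on $H\backslash G$ (it is, because $\rho$ is unitary) and that the identification of the minimal vector in the Whittaker/Kirillov picture with the function $\phi_0$ in the compact induction picture is consistent, which follows from the uniqueness part of Proposition \ref{p:existencewalds} together with the fact that both characterizations pick out the same one-dimensional subspace of $\chi_\pi$-eigenvectors for $ZK_T(n)$.
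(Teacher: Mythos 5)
Your proposal is correct and matches the paper's proof exactly: both realize $\pi$ as $c\text{-Ind}_{ZK_T(n)}^{G}\chi_\pi$, identify the minimal vector with the function of \eqref{eq:optiWaldsVCompactmodel}, and apply Lemma \ref{basislemmaapp} with $H=ZK_T(n)$, $\rho=\chi_\pi$, and $y=z=1$. The only difference is that you spell out the normalization $\langle v_0,v_0\rangle=1$ and the well-definedness of the pairing, which the paper leaves implicit.
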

\begin{proof}This follows from putting $H=ZK_T(n)$, $\rho=\chi_\pi$, and $y=z=1$ in Lemma \ref{basislemmaapp} and using \eqref{eq:optiWaldsVCompactmodel}.
\end{proof}

\begin{remark} Thus, we see that the matrix coefficient of a minimal vector has the remarkable property that it is a character of its supporting group.
\end{remark}

\begin{corollary}\label{cor:matrixcoeff}Let $v_0$ be a minimal vector and let $\Phi_0(g) =
  \frac{\langle
    \pi(g)v_0 ,  v_0\rangle}{\langle v_0 , v_0\rangle}$. Let $\delta \asymp q^{-2n}$ be the volume of $K_T(n)$. Then $\int_{Z \bs G} |\Phi_0(g)|^2 dg = \delta$. Moreover, $R(\overline{\Phi_0})v_0 = \delta v_0$ and $\Phi_0 \ast \Phi_0 = \delta \Phi_0$ where we denote as usual  $$R(\overline{\Phi_0})v := \int_{Z\bs G} \overline{\Phi_0(g)}(\pi(g)v)  \ dg, \quad (\Phi_0 \ast \Phi_0)(h) := \int_{Z\bs G} \Phi_0(g^{-1})\Phi_0(gh) dg.$$
\end{corollary}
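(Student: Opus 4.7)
The plan is to derive all three assertions directly from Proposition \ref{keymatrixprop}, which tells us that $\Phi_0 = \chi_\pi \cdot \mathbf{1}_{ZK_T(n)}$, together with the unitarity of $\chi_\pi$. Throughout I would keep track of Haar measure normalizations: with $\mathrm{vol}(K)=1$ and $\mathrm{vol}(\OF^\times)=1$, and noting $K_T(n)\cap Z = \OF^\times$, the image of $ZK_T(n)$ in $Z\bs G$ under the quotient measure carries volume equal to $\mathrm{vol}(K_T(n))=\delta$. The asymptotic $\delta \asymp q^{-2n}$ is then immediate from the index computation $[K:K_T(n)]\asymp q^{2n}$ already observed after Definition \ref{d:KT}.

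For the first assertion, since $\chi_\pi$ takes values on the unit circle, $|\Phi_0|^2$ is exactly the characteristic function of $Z\bs ZK_T(n)$, so $\int_{Z\bs G}|\Phi_0|^2\,dg = \delta$.

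For the second, I would restrict the integral defining $R(\overline{\Phi_0})v_0$ to $Z\bs ZK_T(n)$ and invoke the defining relation \eqref{e:optimaldef} to rewrite $\pi(g)v_0 = \chi_\pi(g) v_0$ on this set. The integrand then collapses to $\overline{\chi_\pi(g)}\chi_\pi(g)\, v_0 = v_0$, and integration gives $\delta v_0$.

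For the third, the key observation is that $\Phi_0(g^{-1})\Phi_0(gh)$ can be nonzero only when $g^{-1}\in ZK_T(n)$ and $gh\in ZK_T(n)$; since $ZK_T(n)$ is a subgroup, the first condition is equivalent to $g\in ZK_T(n)$, and combined with the second this forces $h\in ZK_T(n)$. Thus both sides of $\Phi_0 \ast \Phi_0 = \delta\,\Phi_0$ vanish for $h\notin ZK_T(n)$, while for $h\in ZK_T(n)$ the multiplicativity of $\chi_\pi$ (Lemma \ref{keylemmamatrix}) simplifies the integrand to $\chi_\pi(g^{-1})\chi_\pi(gh) = \chi_\pi(h) = \Phi_0(h)$, and integration over $Z\bs ZK_T(n)$ produces the factor $\delta$. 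The whole argument is a formal consequence of Proposition \ref{keymatrixprop}; there is no real obstacle beyond the measure bookkeeping already addressed in the opening paragraph.
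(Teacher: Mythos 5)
Your argument is correct and follows exactly the route the paper intends: the paper's one-line proof cites Proposition \ref{p:existencewalds} (the eigenvector relation \eqref{e:optimaldef}) and Proposition \ref{keymatrixprop} (the explicit character form of $\Phi_0$), which are precisely the two inputs you use. You have simply spelled out the measure bookkeeping and the support/multiplicativity computations that the paper leaves to the reader.
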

\begin{proof}This follows immediately from Proposition \ref{p:existencewalds} and Proposition \ref{keymatrixprop}.
\end{proof}

\subsection{The main result}

\begin{theorem}\label{t:localque}
  Assume that $\pi$ is an irreducible, admissible supercuspidal representation of $G$ with trivial central character and
  with conductor of the form $a(\pi) = 4n$ for some positive
  integer $n$.
  Let $v \in \pi$ be an $L^2$-normalized
  minimal vector. Let $\pi_3$ be an irreducible, admissible, unitary representation of $\GL_2(F)$ with trivial central character. \begin{enumerate}

  \item  We have $C(\pi \otimes \overline{\pi }) = q^{4n}$.

  \item Suppose that $u \in \pi_3$ is $K(n)$-fixed.
    Then
    \[
    \mathcal{H}(v,\overline{v}, u)
    =
    \vol(K_T(n))
    \int_{h \in T/Z}
    \langle h u, u \rangle
    = \vol(K_T(n))
    \int_{h \in T(\OF)}
    \langle h u, u \rangle
    \]
    where the $h$-integral
    is taken with respect
    to the probability Haar measure. In particular, if $u$ is also $T(\OF)$-fixed, then  \[
    C(\pi \otimes \overline{\pi })^{1/2}
    \mathcal{H}(v,\overline{v}, u)
    \asymp 1,
    \]
    with absolute implied constants.

  \item  Assume that
    \begin{equation}\label{eq:local-que-conductor-assumption}
      a(\pi) \geq 2 a(\pi_3).
    \end{equation}
    Then $\pi \otimes \overline{\pi } \otimes \pi_3$ is
    distinguished if and only if $a(\pi_3)$ is even.  Furthermore, whenever $a(\pi_3)$ is even, there exists a unit vector
    $u \in \pi_3$
    which is fixed by $K_T(n) = T(\OF)K(n)$, and hence (by the previous part) we have \[
    C(\pi \otimes \overline{\pi })^{1/2}
    \mathcal{H}(v,\overline{v}, u)
    \asymp 1.
    \]
  \end{enumerate}
\end{theorem}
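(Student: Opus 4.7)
My plan is to handle the three parts in sequence. Parts (1) and (2) follow quickly from the dihedral structure of $\pi$ and from Proposition \ref{keymatrixprop}, while Part (3) is the main technical point.

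For Part (1), I exploit the dihedral description $\pi \simeq \AI(\xi_\pi)$ from Remark \ref{rmk:theta-vs-xi}, with $a(\xi_\pi) = 2n$ and $\xi_\pi|_{F^\times} = \eta$ (the latter forced by trivial central character). Mackey's formula on the Weil-group side yields
\[
  \mathrm{Ind}_{W_E}^{W_F}(\xi_\pi) \otimes \mathrm{Ind}_{W_E}^{W_F}(\xi_\pi^{-1})
  \simeq \mathbf{1} \oplus \eta \oplus \mathrm{Ind}_{W_E}^{W_F}(\xi_\pi/\xi_\pi^\sigma),
\]
where $\xi_\pi^\sigma$ is the Galois conjugate. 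A short computation shows $a(\xi_\pi/\xi_\pi^\sigma) = 2n$, so the induced two-dimensional summand has conductor exponent $4n$ while $\mathbf{1} \oplus \eta$ is unramified, giving $C(\pi \otimes \overline{\pi}) = q^{4n}$.

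For Part (2), Proposition \ref{keymatrixprop} gives $\Phi_v = \chi_\pi \cdot \mathbf{1}_{ZK_T(n)}$, hence $|\Phi_v|^2 = \mathbf{1}_{ZK_T(n)}$. Using $\langle g\overline{v},\overline{v}\rangle = \overline{\langle gv, v\rangle}$ reduces
\[
  \mathcal{H}(v,\overline{v},u) = \int_{Z \backslash ZK_T(n)} \langle gu, u\rangle \, dg.
\]
Writing $K_T(n) = T(\OF)K(n)$ and using $K(n)$-invariance of $u$ factorizes this as $\vol(K_T(n))$ times an average of $h \mapsto \langle hu, u\rangle$ over $T(\OF)$; the equivalence with the stated $T/Z$-integral under probability measure follows from $T = Z \cdot T(\OF)$ and $\omega_{\pi_3}=1$. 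If $u$ is further $T(\OF)$-fixed, the average equals $1$ and $\vol(K_T(n)) \asymp q^{-2n} = C(\pi \otimes \overline{\pi})^{-1/2}$ yields the asymptotic.

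For Part (3), I apply Prasad's criterion \eqref{eq:prasad-criterion} to the factorization from Part (1):
\[
  \eps(\tfrac12, \pi \otimes \overline{\pi} \otimes \pi_3)
  = \eps(\tfrac12, \pi_3) \cdot \eps(\tfrac12, \pi_3 \otimes \eta) \cdot \eps(\tfrac12, \AI(\xi_\pi/\xi_\pi^\sigma) \otimes \pi_3).
\]
The twisting formula for unramified $\eta$ together with $\omega_{\pi_3}=1$ gives that the first two factors multiply to $(-1)^{a(\pi_3)}$. Under the conductor hypothesis $a(\pi) \geq 2a(\pi_3)$, the third factor is computed via the projection formula $\AI(\xi_\pi/\xi_\pi^\sigma) \otimes \pi_3 = \AI\bigl((\xi_\pi/\xi_\pi^\sigma) \otimes BC(\pi_3)\bigr)$ and stabilizes to $+1$. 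Hence distinguishedness is equivalent to $a(\pi_3)$ being even. When $a(\pi_3) = 0$, the spherical vector furnishes $u$. When $0 < a(\pi_3) = 2m \leq 2n$, I would construct $u$ by analyzing the action of $T(\OF)$ on the finite-dimensional space $\pi_3^{K(n)}$ and showing, via a type-theoretic or dimension-count argument, that the trivial $T(\OF)$-character occurs in this decomposition; Part (2) then supplies the asymptotic. The main obstacle is precisely this matching between the representation-theoretic condition (occurrence of the trivial $T(\OF)$-character in $\pi_3^{K(n)}$) and the parity condition from the $\eps$-factor; doing so cleanly likely requires either a conceptual argument via Saito--Tunnell or Bushnell--Kutzko type theory applied at the level of $K_T(n)$, or a careful case analysis across the possible types of $\pi_3$ (supercuspidal, principal series, Steinberg).
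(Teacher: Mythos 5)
Your Parts (1) and (2) are essentially correct, and Part (2) matches the paper's argument verbatim. Part (1) is a genuinely different route: the paper derives the conductor by showing $\pi$ is twist-minimal (Lemma \ref{l:mini}) and invoking the computations in \cite[Sec.~2.6]{NPS}, whereas you compute it directly via the Mackey decomposition $\pi\otimes\overline{\pi}\simeq \mathbf{1}\oplus\eta\oplus\AI(\xi_\pi/\xi_\pi^\sigma)$. Your route works but the step $a(\xi_\pi/\xi_\pi^\sigma)=2n$ deserves more justification than ``a short computation'': one must first use trivial central character to deduce $\xi_\pi^\sigma=\xi_\pi^{-1}$ (so $\xi_\pi/\xi_\pi^\sigma=\xi_\pi^2$), then use that $q$ is odd to get $a(\xi_\pi^2)=a(\xi_\pi)=2n$. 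This is exactly what the paper extracts in equation \eqref{e:galoischar} and the surrounding discussion in its proof of Part (3).

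For Part (3), the parity computation is in the same spirit as the paper's, and your factorization of the triple-product $\eps$-factor is equivalent (after applying the inductive property \eqref{eq:sec3Basechangeepsilon}) to the paper's $\eps(\xi^2\otimes\sigma_3|_E,\tfrac12)\eps(\sigma_3|_E,\tfrac12)$. However, ``stabilizes to $+1$'' for the third factor is not a proof. The paper supplies the needed argument: under $a(\pi)\ge 2a(\pi_3)$ one has $a(\xi^2)=2n\ge a(\pi_3)>a(\pi_3)/2+1$, which by \cite[Prop.~1.7, Lem.~3.1]{Tunnell83} forces $\xi^2$ to appear in $\pi_3|_{E^\times}$, and then the main theorem of \cite{Tunnell83} gives $\eps(\xi^2\otimes\sigma_3|_E,\tfrac12)=1$. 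You should incorporate something like this.

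The real gap is the construction of the $ZK_T(n)$-invariant unit vector $u\in\pi_3$ when $0<a(\pi_3)=2m\le 2n$. You correctly flag this as the main obstacle, but the speculated ``type-theoretic or dimension-count argument'' is not carried out, and the parity condition alone does not obviously produce such a vector. The paper closes this gap by a single citation: the Gross--Prasad test vector of \cite[Prop.~2.6]{GrossPrasad:91a}, defined relative to the torus $T$, is invariant under $ZK_T(m)\supseteq ZK_T(n)$ and so serves as $u$. This is exactly the concrete input missing from your sketch.
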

\begin{proof}
  In our case, as $\pi$ has trivial central character, we have $ \overline{\pi }  \simeq \pi$. Therefore in the proof, we will replace $\overline{\pi}$ by $\pi$ whenever appropriate.

  First of all, $\pi$ is twist-minimal by Lemma \ref{l:mini}. The computations in \cite[Sec. 2.6]{NPS} now imply that $C(\pi \otimes \overline{\pi }) = q^{4n}$. This proves part (1).  Next, using Proposition \ref{keymatrixprop}, we see that $$\mathcal{H}(v,\overline{v}, u) = \int_{K_T(n)}\langle h u, u \rangle dh.$$
  Note that $K_T(n) = T(\OF)K(n)$ and by our normalization $T(\OF)$ has volume 1. So, if $u$ is $K(n)$-fixed, we obtain $$\mathcal{H}(v,\overline{v}, u) =  \vol(K_T(n))
  \int_{h \in T(\OF)}
  \langle h u, u \rangle
  $$ as required. This proves part (2) of the theorem.

  We now prove part (3). First of all, we verify that
  \begin{equation}\label{eqn:dist-vs-even-exp}
    \text{$\pi \otimes \pi  \otimes \pi_3$ is
      distinguished}
    \iff \text{$a(\pi_3)$ is even.}
  \end{equation}
  For this, we recall the three possibilities for $\pi_3$.
  \begin{enumerate}[(i)]
  \item $\pi_3$ is a principal series representation
    with trivial central character, hence induced by a pair
    $\{\chi,\chi^{-1}\}$ of characters of $F^\times$.
  \item $\pi_3$ is a twist of the Steinberg
    representation
    by a character $\chi$ of $F^\times$ satisfying $\chi^2=1$.
  \item $\pi_3$ is supercuspidal.
  \end{enumerate}

  In case (i),
  the conductor exponent $a(\pi_3) = a(\chi) + a(\chi^{-1})
  = 2 a(\chi)$
  is even. On the other hand, the self-duality of $\pi$ implies that $$\eps(\pi \otimes \pi \otimes \pi_3,1/2) = \eps(\pi \otimes \pi \otimes \chi,1/2)\eps(\pi \otimes \pi \otimes \chi^{-1},1/2)=1,$$ and therefore, using the criterion \eqref{eq:prasad-criterion}, we see that $\pi \otimes \pi  \otimes \pi_3$ is      distinguished.

  It remains to consider cases (ii) and (iii). We treat both cases simultaneously.
    Recall that the local Langlands
  correspondence associates to $\pi$ a Weil--Deligne
  representation of the form $\sigma_\xi := \Ind_E^F(\xi)$
  for the unramified quadratic extension $E/F$ and character $\xi$
  of $E^\times$  (cf. Remark \ref{rmk:theta-vs-xi}). The fact that $\pi$ has trivial central character implies that  the restriction of $\xi$ to $F^\times$ equals the unramified quadratic character on $F^\times$ (see, e.g., page 7 of \cite{ralfnewform}) and therefore \begin{equation}\label{e:galoischar}
  \xi^2(y) = \xi(x\overline{x})=1,
  \end{equation}
  for all $x \in E^\times$, $y \in F^\times$. Furthermore, $a(\pi) = 2 a(\xi)$ which leads to $a(\xi)=2n$.
  We denote also by
  $\sigma_3$ the Weil--Deligne representation associated to
  $\pi_3$.
  By rewriting Prasad's criterion \eqref{eq:prasad-criterion}
  in terms of Weil--Deligne representations,
  our task reduces to showing that
  \begin{equation}\label{eq:}
    \epsilon(\Ind_E^F(\xi) \otimes \Ind_E^F(\xi)
    \otimes \sigma_3,1/2) = 1
    \iff
    \text{$a(\pi_3)$ is even.}
  \end{equation}
  To compute these $\eps$-factors, we recall
  (see \cite[8.1.4]{Prasad:90a})
  that for any even dimensional Weil--Deligne
  representation $\sigma$,
  one has
  \begin{equation}\label{eq:sec3Basechangeepsilon}
    \epsilon(\Ind_E^F(\xi) \otimes \sigma,1/2 )=\epsilon(\sigma|_{E}\otimes\xi,1/2)\cdot \omega_{E/F}^{\frac{\dim \sigma}{2}}(-1).
  \end{equation}
  Moreover, denoting by $\xi^{-}$ the composition
  of $\xi$ with the nontrivial automorphism
  $x \mapsto \bar{x}$ of $E/F$,
  we have
  \begin{equation}\label{eq:sec3inductionrestriction}
    \Ind_E^F(\xi)|_{E}=\xi\oplus {\xi^-}.
  \end{equation}

    On the other hand, \eqref{e:galoischar} implies that $\xi^- = \xi^{-1} = \overline{\xi}$. Thus
    \begin{align}
      \epsilon(\Ind_E^F(\xi) \otimes \Ind_E^F(\xi) \otimes \sigma_3,1/2)&=\epsilon(\xi\otimes( \Ind_E^F(\xi) \otimes \sigma_3)|_{E},1/2)\\
                                                                                 &=\epsilon(\xi\otimes(\xi\oplus\overline{\xi}) \otimes \sigma_3|_{E},1/2) \notag \\
                                                                                 &=\epsilon(\xi^2\otimes\sigma_3|_{E},1/2)\epsilon(\sigma_3|_{E},1/2).
    \end{align}
    (The first equality follows from
    \eqref{eq:sec3Basechangeepsilon}
    applied to the four-dimensional
    Weil--Deligne representation $\Ind_E^F(\xi) \otimes
    \sigma_3$,
    the second from \eqref{eq:sec3inductionrestriction}.)

   By \eqref{eq:local-que-conductor-assumption}, $a(\xi) =  2n \geq a(\pi_3)$. On the other hand, as $n \ge 1$ and the residue characteristic of $F$ is odd, we have that $a(\xi) = a(\xi^2)$.  So $a(\xi^2 ) \geq a(\pi_3) > a(\pi_3)/2 +1  $ and hence  by \cite[Prop. 1.7 and Lemma 3.1]{Tunnell83}, the character $\xi^2$ appears in $\pi_3|_{E^\times}$ (where we think of $E^\times$ as a subgroup of $G$). So, by the main theorem of \cite{Tunnell83}, we have $\epsilon(\xi^2\otimes\sigma_3|_{E},1/2)=1$. (Observe here that $\sigma_3|_E$ corresponds, under local Langlands, to the base change of $\pi_3$ to $\GL_2(E)$).
      So, to finish the proof of \eqref{eqn:dist-vs-even-exp}, we need to show that  the quantity $\epsilon(\sigma_3|_{E},1/2)$ equals 1 if and only if $a(\pi_3)$ is even. For this, first observe that $\epsilon(\sigma_3|_{E},1/2)=\epsilon(\pi_3,1/2)\epsilon(\pi_3 \otimes \eta,1/2)$  where $\eta$ is the unique non-trivial unramified quadratic character. Now, by \cite[(11)]{ralfnewform}, we have $\epsilon(\pi_3 \otimes \eta,1/2) = (-1)^{a(\pi_3)}\epsilon(\pi_3,1/2)$ and hence $\epsilon(\sigma_3|_{E},1/2)=  (-1)^{a(\pi_3)}$, as desired.

 Finally, let $a(\pi_3) = 2 m$ for some nonnegative
    integer $m \leq n$.
    We now take for $u$ the Gross--Prasad test vector in \cite[Prop 2.6]{GrossPrasad:91a}
    defined relative to the torus $T$.
    Among other properties,
    this vector $u$ is invariant by  $Z K_T(m)$,
    hence in particular by $Z K_T(n)$, as required.
  \end{proof}


\section{Global cusp forms of minimal type}\label{s:global}
From now on, we move to a global setup. Throughout this section, the letter $G$ will stand for the algebraic group $\GL_2$.  We will usually denote a non-archimedean place $v$ by $p$ where $p$ is a rational prime. The set of all non-archimedean places (primes) will be denoted by $\f$. The archimedean place will be denoted by $v=\infty$.
Let $K_\infty = \SO_2(\R)$ be the standard maximal connected compact subgroup of $G(\R)$. We let $\psi$ denote the unique non-trivial additive character on $\A$ that is unramified at all finite places and equals  $e^{2 \pi i x}$ at $\R$. We normalize the Haar measure on $\R$ to be the Lebesgue measure. We fix measures on all our adelic groups  by taking the product of the local measures.  We give all discrete groups the
counting measure and thus obtain a measure on the appropriate quotient groups.
\subsection{Setup and statement of sup-norm result}\label{s:globalstatement} Let $\pi = \otimes_v \pi_v$ be an irreducible, unitary, cuspidal automorphic
representation of $G(\A)$ with trivial central character and the following additional property:
\begin{itemize}

\item If $\pi_p$ is ramified then $p$ is odd and $\pi_p$ is a supercuspidal representation satisfying $a(\pi_p) = 4n_p$ for some positive integer $n_p$.

\end{itemize}

We let $\c \subset \f$ denote the set of primes where $\pi_p$ is ramified. Let $N = \prod_{p\in \c} p^{n_p}$ and $C = N^4 = \prod_{p\in \c} p^{4n_p}$. Thus $C$ is the conductor of the representation $\pi$.

Since $\pi$ has trivial central character, there are two possibilities for $\pi_\infty$.

\medskip

\textbf{Case 1: Principal series representations.}
In this case, $\pi_\infty \simeq \chi_1 \boxplus \chi_2,$ where $\chi_1(y) = |y|^{it} \sgn(y)^{m}$, $\chi_2(y) = |y|^{-it} \sgn(y)^{m}$, with $m \in \{0,1\}$,  $t \in \R \cup (-\frac{i}{2},\frac{i}{2})$. In this case, put $$k=0, \quad T=1+|t|.$$

\textbf{Case 2: Holomorphic discrete series representations.} In this case $\pi_\infty$ is the unique irreducible subrepresentation of $\chi_1 \boxplus \chi_2$, where $\chi_1(y) = |y|^{\frac{k-1}{2} }$, $\chi_2(y) = |y|^{-\frac{k-1}{2}}$ for some positive even integer $k$. In this case we put $$T= k.$$

\medskip

In either case, we will call $k$ the lowest weight. Note that $k=0$ in Case 1. We say that a vector $\phi_\infty$ in $\pi_\infty$ is a lowest weight vector if \begin{equation}\label{weighteq}\pi_\infty \mat{\cos(\theta)}{\sin(\theta)}{-\sin(\theta)}{\cos(\theta)} \phi_\infty  = e^{ik \theta}\phi_\infty.\end{equation}

\begin{definition}
  We say in what follows that a non-zero automorphic form
  $\phi \in V_\pi$ is of ``minimal type'' if $\phi$ is a
  factorizable vector $\phi= \otimes_v \phi_v$ with
  $\phi_v \in V_{\pi_v}$
  that is lowest weight at
  the archimedean place and minimal at the finite places. Precisely:
  \begin{enumerate}
  \item For all $p \in \c$, $\phi_p$ is a minimal vector in the sense of Definition \ref{def:waldsopt}.

  \item For all $p \in \f$, $p \notin \c$, $\phi_p$ is $G(\Z_p)$-invariant.

  \item $\phi_\infty$ is a lowest weight vector.

  \end{enumerate}
\end{definition}
We define
$\|\phi\|_2 = \int_{Z(\A)G(F)\bs G(\A)} |\phi(g)|^2 dg.$

\begin{remark}\label{r:adelic}It is interesting to translate things to a classical setup. Suppose that $\phi$ is an automorpic form of  minimal type. By definition, for each $p \in \c$, $\phi_p$ is an minimal vector with respect to some inert torus in canonical form $T_p = T_{\alpha_p, 0,1}$ (as in Definition \ref{d:inertorus}) where $\alpha_p \in \Z_p^\times$; let $\chi_{\pi_p}$ be the character on $Z_p K_{T_p}(n_p)$ as defined in Definition \ref{d:chipi}. Let $D$ be an integer such that $D \equiv \alpha_p \pmod{p^{n_p}}$ for all $p \in \c$ and define the congruence subgroup $\Gamma_{T,D}(N)$ of $\SL_2(\Z)$ as follows:
$$\Gamma_{T,D}(N) = \left\{ \mat{a}{b}{c}{d} \in \SL_2(\Z): a \equiv d \pmod{N}, \ c \equiv -bD \pmod{N} \right \}.$$
Clearly, the group $\Gamma_{T,D}(N)$ contains the principal congruence subgroup $\Gamma(N)$. Define a character $\chi$ on $\Gamma_{T,D}(N)$ by $\chi(\gamma) = \prod_{p|N}\chi_{\pi_p}^{-1}(\gamma)$. Note that $\chi$ is trivial on the principal congruence subgroup $\Gamma(N^2)$ but non-trivial on $\Gamma(Nm)$ for any $1\le m <N$, $m|N$.

Then, the function $f$ on $\H$ defined by $f(x+iy) = y^{-k/2}\phi\left(\mat {y^{1/2}} {xy^{-1/2}}{}{y^{-1/2}} \right)$ has the following properties:

\begin{itemize}

  \item If we are in Case 1, then $f$ is a real analytic function satisfying  $\Delta f = - \lambda f $ and if we are in Case 2 then $f$ is a holomorphic function.

\item For all $\gamma \in \Gamma_{T,D}(N)$, $z \in \H$, \begin{equation}\label{masform} f|_k \gamma = \chi(\gamma)f.\end{equation}

\item $f$ decays
rapidly at the cusps.

 \item $f$ is an eigenfunction of all  the Hecke operators $T_n$ for $(n, N)=1$.

      \end{itemize}

It is also clear that $\sup_{g \in G(\A)} |\phi(g) | = \sup_{z\in \H} |y^{k/2}f(z)|$.
\end{remark}

Let the real numbers $\lambda_\pi(n)$ be the coefficients of the (finite part of the) $L$-function attached to $\pi$, i.e., \begin{equation}\label{e:lambdapi}L_\f(s, \pi) = \sum_{n=1}^\infty \frac{\lambda_\pi(n)}{n^s}.\end{equation} Note that all our $L$-functions are normalized so that the functional equation takes $s \rightarrow 1-s$.

\begin{definition}\label{d:deltapi}We fix $\delta_\pi$ to be any real number such that $\lambda_\pi(n) \le d(n)n^{\delta_\pi}$ for all positive integers $n$ where $d(n)$ is the divisor function. In particular, we may uniformly take $\delta_\pi = \frac{7}{64}$ in Case 1, and $\delta_\pi=0$ in Case 2.
\end{definition}

Our main result is as follows.
\begin{theorem}\label{t:globalmain}Let $\phi \in V_\pi$ be of minimal type and satisfy $\|\phi\|_2 = 1$. \begin{enumerate} \item If we are in Case 1 then \begin{equation}\label{E:CASE1RESULT}C^{\frac{1}{8} -\eps} T^{\frac{1}{6} - \eps} \ll_{\eps} \sup_{g \in G(\A)} |\phi(g)| \ll_{\eps} C^{\frac{1}{8} + \eps} T^{\frac12 + \eps} \min(C^{\frac{\delta_\pi}{2}} T^{\delta_\pi}, C^{\frac{1}{32}}).\end{equation}

\item If we are in Case 2, then \begin{equation}\label{E:CASE2RESULT}C^{\frac{1}{8} -\eps} k^{\frac{1}{4} - \eps} \ll_{\eps} \sup_{g \in G(\A)} |\phi(g)| \ll_{\eps}  C^{\frac{1}{8} + \eps} k^{\frac14+\eps}.\end{equation}
\end{enumerate}
\end{theorem}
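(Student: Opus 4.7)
My plan is to combine the global Whittaker expansion with the sparsity of the minimal-vector Whittaker function established in Corollary \ref{localwsupportfinal}: for each ramified $p$ and each $k_p \in \GL_2(\Z_p)$, the function $q \mapsto W_{\phi_p}(a(q)k_p)$ is supported on a single coset of $U_{n_p}$ in $\varpi^{-2n_p}\Z_p^\times$. Via strong approximation and right-$K_\infty$-equivariance, I reduce to bounding $|\phi(g)|$ over a generating domain of the form $g = g_\f n(x)a(y)$ with $g_\f \in \prod_{p|C}\GL_2(\Z_p)$ and $y \ge \sqrt{3}/2$, along the lines of \cite{saha-sup-level-hybrid}. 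Expanding $\phi(g) = \sum_{q \in \Q^\times}W_\phi(a(q)g)$ and applying Corollary \ref{localwsupportfinal} at every $p \in \c$, only $q = m/N^2$ with $m \in \Z \setminus \{0\}$, $\gcd(m,N)=1$, and $m \equiv A(g_\f) \pmod N$ contributes. Combining with the Casselman--Shalika formula at the unramified primes gives
\begin{equation*}
\phi(g) \;=\; C_\phi \!\!\sum_{\substack{m \neq 0 \\ m \equiv A \bmod N}} \frac{\lambda_\pi(|m|)}{|m|^{1/2}}\, W_\infty(a(my/N^2))\, e(mx/N^2),
\end{equation*}
with $|C_\phi| \asymp C^{1/8+o(1)}$; the factor $C^{1/8} = N^{1/2} = \prod_{p|N}p^{n_p/2}$ reflects the local Kirillov $L^2$-norms of the minimal vectors, together with a lower-order $C^{\eps}$ correction coming from $L(1,\ad\pi)^{-1/2}$.

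The cardinality of the effective sum is $\asymp NT$ rather than the $\asymp N^2 T$ encountered in the newform case, a factor-of-$N$ thinning which, via Cauchy--Schwarz against the second moment of Hecke coefficients, is responsible for the key saving of $N^{1/2} = C^{1/8}$ over the local bound $C^{1/4}$. For the upper bound in Case 2 (holomorphic), $W_\infty(a(t)) \propto (4\pi t)^{k/2}e^{-2\pi t}/\sqrt{\Gamma(k)}\cdot \mathbf{1}_{t>0}$ is sharply concentrated near $t \asymp k$; running Xia's weight-aspect argument \cite{xia} (Cauchy--Schwarz plus Stirling) in our setting yields the optimal factor $k^{1/4+\eps}$, producing $|\phi| \ll C^{1/8+\eps}k^{1/4+\eps}$. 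In Case 1 (Maass), $W_\infty(a(t)) \asymp |t|^{1/2}K_{it}(2\pi|t|)$, and the transition-range analysis of $K_{it}$ provides the $T^{1/2+\eps}$ factor. The remaining $\min(C^{\delta_\pi/2}T^{\delta_\pi},\, C^{1/32})$ is obtained by taking the better of (i) a pointwise $\ell^\infty$-bound via $|\lambda_\pi(m)| \le d(m)m^{\delta_\pi}$ and (ii) an $\ell^2$-mean bound using Rankin--Selberg along the arithmetic progression $m \equiv A \pmod N$; the sparsity yields a factor-$N$ saving in the mean value, which propagates to the unconditional $C^{1/32}$ via Cauchy--Schwarz.

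For the matching lower bounds, I choose $g_\f$ forcing $A(g_\f) = 1$ and evaluate at $y = N^2/k$ in Case 2 (resp.\ $y = N^2/T$ in Case 1), where the $m = 1$ term dominates; its size is $\asymp C^{1/8}|W_\infty(a(y/N^2))|$, and standard asymptotics of $W_\infty$ near its peak give the claimed lower bounds. The main obstacle will be Case 1: specifically, handling the oscillation of $K_{it}$ in its transition range uniformly in $C$ and $T$, and establishing the unconditional $C^{1/32}$ improvement via a Rankin--Selberg second-moment estimate restricted to the arithmetic progression $m \equiv A \pmod N$ (one must extract the progression from the usual Rankin--Selberg integral via additive characters, and then control the resulting shifted moment). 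Case 2 is, by contrast, essentially a transcription of Xia's argument, with the minimal-vector sparsity replacing the role of ``single cusp at level one.''
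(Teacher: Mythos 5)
Your overall framework — Whittaker expansion over a generating domain $g = g_\f n(x)a(y)$ with $g_\f \in \prod_{p|C}\GL_2(\Z_p)$, $y\ge\sqrt{3}/2$, the factorization with $|C_\phi|\asymp C^{1/8+o(1)}$ coming from the Kirillov normalization and Hoffstein--Lockhart, and the key observation that Corollary \ref{localwsupportfinal} restricts the Fourier sum to an arithmetic progression $m\equiv A\pmod N$ of length $\asymp NT$ rather than $\asymp N^2T$ — matches the paper. Case~2 (Xia's argument with the progression restriction playing the role of the single cusp) and the conditional Case~1 bound $C^{\delta_\pi/2}T^{\delta_\pi}$ via pointwise $|\lambda_\pi(m)|\le d(m)m^{\delta_\pi}$ also match. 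The lower bounds are fine (the paper instead uses $\sup|\phi|\ge\sup|W_\phi|$ directly rather than estimating the tail at a specific $y$; also you want $y\asymp N^2 k$, not $N^2/k$, so that the $m=1$ term sits at the peak of $W_\infty$).

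The genuine gap is in your route to the unconditional $C^{1/32}$. You propose Cauchy--Schwarz and a progression-restricted second moment $\sum_{m\equiv A(N),\,m\le R}|\lambda_\pi(m)|^2 \ll (R/N)^{1+\eps}$, and you flag ``extracting the progression via additive characters, then controlling the resulting shifted moment'' as the remaining difficulty. That step is not just a technical nuisance; it amounts to a nontrivial bound on $\sum_m \chi(m)|\lambda_\pi(m)|^2$ for characters $\chi$ of conductor up to $N$, i.e., on $L(s,\pi\times\bar\pi\otimes\chi)$ with analytic conductor as large as a power of $C$ comparable to the length $R$. Even the convexity bound makes the error term swamp the would-be main term $R/N$, so the factor-$N$ saving you want does not follow from standard Rankin--Selberg inputs, and the strategy as stated does not close. (It is also internally inconsistent: if that restricted second moment \emph{did} hold with a full factor-$N$ saving, plugging it back in would give $C^{1/8+\eps}T^{1/2+\eps}$ with no $C^{1/32}$ at all, which is stronger than the theorem claims; you have not derived $C^{1/32}$ from it.) The paper deliberately avoids this: instead of Cauchy--Schwarz with exponents $(2,2)$, it applies H\"older with exponents $(8,8/7)$, putting the full (unrestricted) $8$-th moment $\sum_{m\le R}|\lambda_\pi(m)|^8\ll R^{1+\eps}$ on the Hecke side — available via automorphy of $\sym^4\pi$ and Rankin--Selberg of $\sym^4\pi\times\sym^4\bar\pi$, Proposition~\ref{p:sym4} — and putting the arithmetic-progression restriction only on the $8/7$-th moment of the Bessel profile $f(x)=\min(T^{1/6},|x/T-1|^{-1/4})$, which is elementary. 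The exponent $1/32$ then falls out as $R^{1/8}$ with $R\asymp N^2T/y$, giving $N^{5/8}=C^{1/8+1/32}$ at $y\asymp 1$. You should replace step (ii) with this H\"older-plus-$\sym^4$ argument; the shifted-moment route you sketch would require substantially deeper input and is not what the stated exponent reflects.
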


We will prove this theorem by carefully looking at the Whittaker expansion. Before getting into the details of the proof, let us make a simple but key reduction. Let $\F$ be the subset of $B_1(\R)^+$ defined by $\F:= \{n(x)a(y): x\in \R, \ y\ge \sqrt{3}/2 \}.$ Let $$\J_N = \prod_{p|N} G(\Z_p).$$ Then, using strong approximation, it follows that for any $g \in G(\A)$, the double coset $G(\Q)g\prod_{p \nmid N}G(\Z_p)$ has a representative in $\J_N \times \F$. Since $\phi$ is left $G(\Q)$-invariant and right $\prod_{p \nmid N}G(\Z_p)$-invariant, it suffices in Theorem \ref{t:globalmain} to only consider the supremum for $g$ lying in  $\J_N \times \F$, i.e., $g= g_\f n(x)a(y)$ with $g_\f \in \J_N $, $n(x)a(y) \in \F$.

\subsection{Generalities on the Whittaker expansion and proof of the lower bounds}\label{s:fourierglobal}
Let $\pi$, $\phi$ be as in the statement of Theorem \ref{t:globalmain}. Let $g_\f = \prod_{v\in \f}g_v \in G(\A_\f)$, $x \in \R$, $y \in \R^+$. Then the Whittaker expansion for $\phi$ says that \begin{equation}\label{whittakerexp}
\phi(g_\f n(x)a(y) )=
\sum_{q \in \mathbb{Q}_{\neq 0}}
W_\phi(a(q) g_\f n(x)a(y))\end{equation} where $W_\phi$ is the global Whittaker newform
corresponding to $\phi$ given explicitly by
\begin{equation}\label{e:globwhit}W_\phi(g) = \int_{x \in \mathbb{A} / \mathbb{Q}} \phi(n(x) g)
\psi(-x) \, d x.\end{equation} For each unramified prime $p$, i.e., for $ p \in \f - \c$, let the function $W_p(g)$ on $G(\Q_p)$ be equal to the unique right $G(\Z_p)$-invariant function in the Whittaker model of $\pi_p$ normalized so that $W_{\pi_p}(1)=1.$ It is well-known that for $(m,N)=1$ we have $$m^{1/2} \prod_{p \in \f - \c} W_p(a(m)) = \lambda_\pi(m),$$ where $\lambda_\pi(m)$ is defined by \eqref{e:lambdapi}.
  For each ramified prime $p$, i.e., for $p \in \c$, let the function $W_p(g)$ on $G(\Q_p)$ be equal to $\frac{W_{0,p}(g)}{\langle W_{0,p}, W_{0,p} \rangle^{1/2}}$ where $W_{0,p}$ is an element corresponding to $\phi_p$ in the Whittaker model for $\pi_p$. The function $W_p(g)$ in this case is given explicitly by the right hand side of \eqref{formulawhittaker}.
Finally for $v=\infty$,  let the function $W_\infty(g)$ on $G(\R)$ be the element of the Whittaker model of $\pi_\infty$  corresponding to $\phi_\infty$, normalized so that $W_{\infty}(a(y))=\kappa(y)$ for all $y \in \R$ where  \begin{equation}\kappa(y):= \begin{cases} |y|^{1/2} K_{it}(2 \pi |y|) \sgn(y)^m &\text{ in Case 1,}\\  y^{k/2} e^{-2 \pi y} \left(\frac{1 + \sgn(y)}{2}\right) &\text{ in Case 2.}\end{cases}\end{equation} Put $$c_\infty = \langle W_\infty, W_\infty \rangle^{1/2}= \left( \int_{\R^\times} |\kappa(y)|^2  \frac{dy}{|y|}\right)^{1/2}.$$ It is a well-known fact (see, e.g.,  \cite[Lemma 5.3]{templier-large} or \cite[(27)]{sahasupwhittaker}) that \begin{equation}\label{hpiformula}\frac{\sup_{g \in G(\R)} |W_\infty(g)|}{c_\infty} = \frac{\sup_{y >0} \kappa(y)}{c_\infty} \asymp \begin{cases}T^{1/6} &\text{ in Case 1,} \\ k^{1/4}  &\text{ in Case 2.} \end{cases}\end{equation}

By Lemma 2.2.3 of \cite{michel-2009}, the function $W_\phi$ factors as follows. For $g_\f = \prod_{v\in \f}g_v \in G(\A_\f)$, $x \in \R$, $y \in \R^+$, we have \begin{equation}\label{whitfactor}W_\phi(g_\f n(x)a(y)) = \sqrt\frac{2 \zeta(2)}{L_{\f}(1, \pi, \Ad)} \times \frac{e^{2 \pi i x} \kappa(y)}{c_\infty} \times  \prod_{p \in \f}W_p(g_p) \end{equation} where $L_{\f}(1, \pi, \Ad) = \prod_{p<\infty} L(1,\pi_p ,\Ad)$ denotes the finite part of the global adjoint $L$-function for $\pi$. By a result of Hoffstein-Lockhart \cite{HL94}, we have \begin{equation}\label{e:hlbd}(CT)^{-\eps} \ll_{ \eps} L_{\f}(1, \pi, \Ad) \ll_{ \eps}(CT)^\eps.\end{equation}

\begin{remark} To deduce \eqref{whitfactor} from Lemma 2.2.3 of \cite{michel-2009}, note that  from Table 1 of \cite{NPS} that $\frac{L(1,\pi_p ,\Ad)\zeta_p(1)} {\zeta_p(2)}
= 1$ for all $p \in \c$.
\end{remark}

Using \eqref{e:globwhit},  \eqref{hpiformula}, \eqref{whitfactor}, \eqref{e:hlbd}, we conclude that \begin{align*}\sup_{g\in G(\A)}|\phi(g)| & \gg \sup_{g\in G(\A)}|W_{\phi}(g)| \\ &\gg_{ \eps} (CT)^{-\eps} h(\pi_\infty) \prod_{p \in \c}\sup_{g\in G(\Q_p)} |W_p(g)|   \end{align*}
where $h(\pi_\infty) = T^{1/6}$ in Case 1 and $h(\pi_\infty) = k^{1/4}$ in Case 2. By Corollary~\ref{cor:whit}, we have $$\prod_{p \in \c}\sup_{g\in G(\Q_p)} |W_p(g)| \gg_\eps C^{1/8 - \eps}.$$ This completes the proof of the lower bounds in Theorem \ref{t:globalmain}!

Next, recall that for $(m,N)=1$, we have
$\lambda_\pi(m) = m^{1/2} \prod_{p \in \f - \c} W_p(a(m)).$ From Definition \ref{d:deltapi}, we have \begin{equation}\label{thetaineq}\lambda_\pi(m) \ll_\eps m^{\delta_\pi+\eps}.\end{equation}

We will need the following property of the coefficients $\lambda_\pi(n)$ to get an improved bound in Case 1.
\begin{proposition}\label{p:sym4}Let $1 \le r \le 4$ be an integer. Then
$$\sum_{1 \le |n| \le X} |\lambda_\pi(n)|^{2r} \ll_\eps X (NTX)^\eps.$$
\end{proposition}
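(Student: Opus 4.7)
The plan is to establish the bound via the analytic properties of the Dirichlet series $L_{2r}(s) := \sum_n \lambda_\pi(n)^{2r} n^{-s}$, which can be factored in terms of Rankin--Selberg $L$-functions of symmetric powers of $\pi$. Since $\pi$ has trivial central character it is self-dual, so $\lambda_\pi(n) \in \R$ and $|\lambda_\pi(n)|^{2r} = \lambda_\pi(n)^{2r}$.

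The main input is the automorphy of $\mathrm{Sym}^j \pi$ for $j \le 4$: Gelbart--Jacquet established automorphy of $\mathrm{Sym}^2 \pi$ on $\GL_3$, Kim--Shahidi of $\mathrm{Sym}^3 \pi$ on $\GL_4$, and Kim of $\mathrm{Sym}^4 \pi$ on $\GL_5$. Consequently, for all $a, b \le 4$, the Rankin--Selberg $L$-functions $L(s, \mathrm{Sym}^a \pi \times \mathrm{Sym}^b \pi)$ are standard $\GL_{(a+1)(b+1)}$ automorphic $L$-functions, with meromorphic continuation to $\C$, at most a simple pole at $s=1$ (occurring iff $a=b$), polynomial analytic conductor $\ll (NT)^{O_r(1)}$, and Hoffstein--Lockhart bounds $L(1,\cdot) \ll (NT)^\eps$.

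At an unramified prime $p$ with Satake parameters $\{\alpha_p, \alpha_p^{-1}\}$, one has $\lambda_\pi(p^k) = \chi_{\mathrm{Sym}^k V}(\mathrm{diag}(\alpha_p, \alpha_p^{-1}))$, where $V$ denotes the standard representation of $\SL_2(\C)$. A partial-fraction computation expresses the local factor $\sum_k \lambda_\pi(p^k)^{2r} p^{-ks}$ as a rational function whose denominator divides $\prod_{j=0}^{2r}(1 - \alpha_p^{2r-2j} p^{-s})$, the inverse of $L_p(s, \mathrm{Sym}^{2r}\pi)$. The Clebsch--Gordan identity $\mathrm{Sym}^a V \otimes \mathrm{Sym}^b V = \bigoplus_{j=0}^{\min(a,b)} \mathrm{Sym}^{a+b-2j} V$ yields
\begin{align*}
L(s, \mathrm{Sym}^3 \pi \times \mathrm{Sym}^3 \pi) &= L(s, \mathrm{Sym}^6 \pi) L(s, \mathrm{Sym}^4 \pi) L(s, \mathrm{Sym}^2 \pi) \zeta(s), \\
L(s, \mathrm{Sym}^4 \pi \times \mathrm{Sym}^4 \pi) &= L(s, \mathrm{Sym}^8 \pi) L(s, \mathrm{Sym}^6 \pi) L(s, \mathrm{Sym}^4 \pi) L(s, \mathrm{Sym}^2 \pi) \zeta(s),
\end{align*}
from which $L(s, \mathrm{Sym}^6 \pi)$ and $L(s, \mathrm{Sym}^8 \pi)$ are obtained as ratios of Rankin--Selberg $L$-functions of $\mathrm{Sym}^j \pi$ with $j \le 4$, inheriting meromorphic continuation and polynomial conductor bounds. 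An analogous argument expresses $L_{2r}(s)$ for each $r \le 4$ as a product of such standard $L$-functions times an auxiliary Euler product absolutely convergent in a half-plane $\Re(s) > 1 - \eta$ for some $\eta > 0$.

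The proof is concluded by applying Perron's formula to $L_{2r}(s)$ and shifting the contour past $\Re(s) = 1$. The pole at $s=1$ contributes a main term $\ll X (NT)^\eps$, while the shifted-line integral is controlled by convexity bounds on the Rankin--Selberg $L$-functions, yielding an error $O(X^{1-\delta}(NT)^{O(1)})$ for some $\delta > 0$. For $X \ge (NT)^A$ (suitable $A > 0$) this gives the claimed bound; for smaller $X$, the Kim--Sarnak bound $|\lambda_\pi(n)| \ll \tau(n) n^{7/64 + \eps}$ combined with the divisor bound $\sum_{n \le X} \tau(n)^{2r} \ll X (\log X)^{O_r(1)}$ suffices, the resulting polynomial loss being absorbed in $(NTX)^\eps$. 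The main obstacle is the detailed local Euler-product computation (including ramified primes) and the inductive extraction of $L(s, \mathrm{Sym}^j \pi)$ for $j \in \{6, 8\}$; the restriction $r \le 4$ is essential, since $r \ge 5$ would require automorphy of $\mathrm{Sym}^j \pi$ for some $j \ge 9$, which lies outside the Kim--Shahidi--Kim framework.
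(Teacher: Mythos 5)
Your proof follows essentially the same strategy as the paper's: invoke automorphy of $\sym^j\pi$ on $\GL_{j+1}$ for $j\le 4$ (Gelbart--Jacquet, Kim--Shahidi, Kim) and then exploit the analytic properties of the resulting Rankin--Selberg $L$-functions to bound the moments of $\lambda_\pi(n)$. The paper refers to Lemma~2.1 of Hoffstein--Lockhart, whose argument for $r=2$ compares $\sum_n \lambda_\pi(n)^{2r}n^{-s}$ with the genuine automorphic $L$-function $L(s,\sym^r\pi\times\sym^r\bar\pi)$ and then concludes by a positivity (Landau-type) argument, without contour shifting or explicit convexity bounds.

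Two points are worth flagging in your version. First, the partial-fraction observation only tells you that the \emph{denominator} of the local factor divides $L_p(s,\sym^{2r}\pi)^{-1}$; the ratio $\bigl(\sum_k\lambda_\pi(p^k)^{2r}p^{-ks}\bigr)/L_p(s,\sym^{2r}\pi)$ has a \emph{non-vanishing} coefficient of $p^{-s}$ (namely $\lambda_\pi(p)^{2r}-\lambda_{\sym^{2r}\pi}(p)$, which is $O_r(1)$ under Ramanujan but not zero), so the resulting Euler product converges only for $\Re(s)>1$, not past it. To obtain an auxiliary factor convergent in $\Re(s)>1-\eta$ you must match the lower symmetric powers with their full Clebsch--Gordan multiplicities in $V^{\otimes 2r}$; you gesture at this with ``a product of such standard $L$-functions,'' but this local bookkeeping is the crux, and comparing with $L(s,\sym^{2r}\pi)$ alone is insufficient. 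Second, extracting $L(s,\sym^6\pi)$ and $L(s,\sym^8\pi)$ as ratios requires nonvanishing of the dividing factors $L(s,\sym^j\pi)$, $j\le4$, near $\Re(s)=1$; this is true by Jacquet--Shalika once automorphy is known, but needs saying. It can be sidestepped entirely by comparing instead with the Rankin--Selberg $L$-functions $L(s,\sym^a\pi\times\sym^b\pi)$ for $a,b\le r$ directly, never isolating a $\sym^{2j}$-factor with $2j>4$ as a separate object — which is closer to what the paper's cited argument does.
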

\begin{proof}This follows by first taking the $\sym^r$-lift of $\pi$ to $\GL_{r+1}$ which is known to exist \cite{MR533066, MR1937203} and then using the analytic properties of $L(s, \sym^r \pi \otimes \sym^r \bar{\pi})$.  For a detailed proof in the case $r=2$, we refer the reader to \cite[Lemma 2.1]{HL94}. The proofs in the other cases are essentially identical.
\end{proof}

 Let $g_\f \in \J_N$.  For each $m\in \Z$, we define $$\lambda'(m; g_\f) = \prod_{p\in \c}W_p\left(a(m/N^2)g_p\right).$$ By Corollary \ref{localwsupportfinal}, there exists some integer $b=b(g_\f)$ coprime to $N$, such that
 \begin{equation}\label{lambdaram} |\lambda'(m; g_\f)| = \begin{cases} \sqrt{\varphi(N)} &\text{ if }m \equiv b \pmod {N}\\ 0 &\text{ otherwise.}\end{cases}\end{equation}

 Therefore, for any $g_\f \in \J_N$ and $x \in \R$, $y \in \R^+$, the expansion \eqref{whittakerexp} together with the above discussion gives us:

 \begin{equation}\label{whittakerexp2}
\phi(g_\f n(x)a(y) )=
\sqrt\frac{2 \zeta(2)}{L_{\f}(1, \pi, \Ad)} \times \frac{1}{c_\infty} \sum_{\substack{m \in \Z \\ m \equiv b\bmod{N}}} m^{-1/2} e^{\frac{2 \pi i m x}{N^2}} \kappa(my/N^2) \lambda_\pi(m) \lambda'(m;g_\f).\end{equation}

In particular, the Whittaker expansion of $\phi$ is supported on an arithmetic progression! It is this remarkable feature that will allow us to prove a strong upper bound.  As a key first step, using \eqref{e:hlbd} and the triangle inequality, we note the bound

 \begin{equation}\label{whittakerexp3}
|\phi(g_\f n(x)a(y) )| \ll_\eps (CT)^\eps
 \frac{N^{1/2}}{c_\infty} \sum_{\substack{m \in \Z \\ m \equiv b\bmod{N}}} m^{-1/2} |\kappa(my/N^2)| |\lambda_\pi(m)|.\end{equation}
\subsection{Proof of the upper bounds}
 We can now prove the upper bounds in Theorem \ref{t:globalmain}. Throughout this subsection, let $g_\f \in \J_N$ and $x \in \R$, $y \in \R^+$, with $y \ge \frac{\sqrt{3}}2$. As noted at the end of Section \ref{s:globalstatement}, it is sufficient to restrict to $g=g_\f n(x) a(y)$ with $g_\f$, $x,y$ as above.

 First, we deal with Case 1. In this case we have $$|\kappa(my/N^2)|=N^{-1}(my)^{1/2} |K_{it}(2 \pi |my|/N^2)|.$$ By \cite[Lemma 5.3]{templier-large}, $c_\infty \gg e^{-\pi t/2}$. So \eqref{whittakerexp3} gives
  \begin{align}\label{whittakerexp5}
|\phi(g_\f n(x)a(y) )| \ll_\eps (CT)^\eps e^{\pi t/2}
  \left(\frac{y}{N} \right)^{1/2}
  \sum_{\substack{m \in \Z \\ m \equiv b\bmod{N}}} |\lambda_\pi(m)||K_{it}(2 \pi |my|/N^2)|\end{align} We need to prove the following two bounds:
 \begin{equation} \label{bound1} |\phi(g_\f n(x)a(y) )| \ll_\eps  C^{\frac{1}{8} + \frac{\delta_\pi}{2} + \eps} T^{\frac12 + \delta_\pi+\eps} \end{equation}
 \begin{equation} \label{bound2} |\phi(g_\f n(x)a(y) )| \ll_\eps  C^{\frac{1}{8} + \frac{1}{32} + \eps} T^{\frac12 + \eps} \end{equation}

Let $f(y) = \min(T^{1/6}, \left| \frac{y}{T} - 1\right|^{-1/4}).$ Then it is known  that $ e^{\pi t/2} |K_{it}(y)| \ll T^{-1/2} f(y)$; see, e.g., \cite[(3.1)]{templier-sup-2}. Furthermore, the quantity
 $\lambda_\pi(m)|K_{it}(2 \pi |my|/N^2)|$ decays exponentially for $ m \gg R$ where $R=\frac{N^{2+\eps}(T+T^{1/3 +\eps})}{2 \pi y}$. Therefore, if $R\ll 1$, the right side of \eqref{whittakerexp5} is negligible. So we henceforth assume that $R\gg 1$, i.e., $y \ll N^{2+\eps} T^{1+\eps}$. Furthermore, for the same reason, we can restrict the sum in \eqref{whittakerexp5} to $|m| <R$.

  Let us now prove \eqref{bound1}. We obtain from \eqref{thetaineq} and \eqref{whittakerexp5}

 \begin{align*}\label{whittakerexp6}
|\phi(g_\f n(x)a(y) )| &\ll_\eps (CT)^\eps T^{-1/2}
  \left(\frac{y}{N} \right)^{1/2}
  \sum_{\substack{1 \le |m| \le R \\ m \equiv b\bmod{N}}} m^{\delta_\pi + \eps} f(2 \pi |my|/N^2)\\&\ll_\eps (CT)^\eps T^{-1/2}
  \left(\frac{y}{N} \right)^{1/2}
  N^{\delta_\pi}\sum_{\substack{0 < |m| \le R/N \\ m \in \frac{b}N+\Z}} m^{\delta_\pi + \eps} f(2 \pi |my|/N)\\& \ll_\eps (CT)^\eps T^{-\frac12}
  y^{\frac12}
  N^{\delta_\pi-\frac12}\max(1, (R/N)^{\delta_\pi})\left(T^{\frac16} + \int_0^{\frac{R}{N}} \left|\frac{2 \pi xy}{NT} - 1 \right|^{-\frac14}dx\right)\\&\ll_\eps (CT)^\eps T^{-\frac12}
  y^{\frac12}
  N^{\delta_\pi-\frac12}\max(1, (R/N)^{\delta_\pi})\left(T^{\frac16} + \frac{NT}{y}\right)\\&\ll_\eps (CT)^\eps N^{\frac12 + 2 \delta_\pi}T^{\frac12+\delta_\pi},
  \end{align*}

 where in the last step we have used $1 \ll y  \ll N^2 T^{1+\eps}$. This completes the proof of \eqref{bound1} since $N^{\frac12 + 2 \delta_\pi}= C^{\frac18 +  \frac{\delta_\pi}2}.$

Let us now prove \eqref{bound2}. We obtain from Proposition \ref{p:sym4} and \eqref{whittakerexp5}, together with Holder's inequality:
 \begin{align*}
|\phi(g_\f n(x)a(y) )| &\ll_\eps (CT)^\eps T^{-1/2}
  \left(\frac{y}{N} \right)^{1/2}  \\ & \quad \times
  \left(\sum_{\substack{1 \le |m| \le R }} |\lambda_\pi(m)|^8 \right)^{1/8} \times \left(\sum_{\substack{0 < |m| \le R \\ m \equiv b\bmod{N}}}f(2 \pi |my|/N^2)^{8/7}\right)^{7/8}\\&\ll_\eps (CT)^\eps T^{-1/2}
  \left(\frac{y}{N} \right)^{1/2} R^{1/8}
  \left(\sum_{\substack{1 \le |m| \le R/N \\ m \in \frac{b}N+\Z}}  f(2 \pi |my|/N)^{8/7}\right)^{7/8}\\& \ll_\eps (CT)^\eps T^{-\frac12}
  y^{\frac12}
  N^{-\frac12}R^{1/8}\left(T^{\frac{4}{21}} + \int_0^{\frac{R}{N}} \left|\frac{2 \pi xy}{NT} - 1 \right|^{-\frac27}dx\right)^{7/8}\\&\ll_\eps (CT)^\eps T^{-\frac38}
  y^{\frac38}
  N^{-\frac14}\left(T^{\frac16} + \left(\frac{NT}{y}\right)^{7/8}\right)\\&\ll_\eps (CT)^\eps N^{\frac58}T^{\frac12},
  \end{align*}
which is equivalent to \eqref{bound2}.

  Next, we deal with Case 2. In this case, we have $\delta_\pi=0$ and $$|\kappa(my/N^2)|=N^{-k}(my)^{k/2} e^{-2 \pi my/N^2}.$$ By \cite[Lemma 5.3]{templier-large}, $c_\infty$ equals $(4 \pi)^{-k/2}\Gamma(k)^{1/2}$. So \eqref{whittakerexp3} gives
\begin{align*}
|\phi(g_\f n(x)a(y) )| \ll_\eps (CT)^\eps
 \frac{(4\pi y)^{k/2}N^{-k+1/2}}{\Gamma(k)^{1/2}} \sum_{\substack{m \in \Z \\ m \equiv b\bmod{N}}} e^{-2 \pi my/N^2} m^{(k-1)/2 +\eps} \\  \ll_\eps (CT)^\eps \left(\frac{y}{N} \right)^{1/2}
 \frac{2^{k/2}}{\Gamma(k)^{1/2}} \sum_{n\in \frac{b}{N} + \N} e^{-2 \pi ny/N} (2\pi ny/N)^{(k-1)/2 +\eps}.\end{align*}

To estimate the above sum we proceed similarly to \cite{xia}. Indeed, if we take the relevant series in \cite{xia} and replace $y \mapsto y/N$, $k \mapsto k/2$, and take the summation over $b/N+ \Z_{\ge0}$ instead of $\Z_{>0}$, we get our series above. Observe also that the function $\xi \mapsto e^{-2 \pi \xi y/N} (2\pi \xi y/N)^{(k-1)/2 +\eps}$ obtains its maximum at $$\xi = \frac{k/2 - 1/2 + \eps}{2 \pi (y/N)}.$$ So the general term in the series above is decreasing if $\xi < b/N$, i.e., if $\frac{y}{N} \gg \frac{Nk}{b}$. Now the argument of \cite{xia}, \emph{mutatis mutandis}, leads to

\begin{equation}
|\phi(g_\f n(x)a(y) )| \ll_\eps \begin{cases}(CT)^\eps \left(\frac{k^{1/4+\eps}}{(y/N)^{1/2}} + \frac{k^\eps  (y/N)^{1/2}}{k^{1/4}} \right) & \text{ if } \frac{y}{N} \ll \frac{Nk}{b} \\
 (CT)^\eps \left(\frac{k^{1/4+\eps}}{(y/N)^{1/2}} + \frac{k^{1/4+\eps}  N^{1/2}}{b^{1/2}} \right) &\text{ if } \frac{y}{N} \gg \frac{Nk}{b}.\end{cases}\end{equation} As $y \gg 1$, in either case we have $$|\phi(g_\f n(x)a(y) )| \ll_\eps C^{1/8 + \eps}k^{1/4 + \eps},$$ completing the proof in Case 2.

\subsection{The proof of Theorem \ref{thm:watson-ext}}

We explain in this final subsection the proof of Theorem
\ref{thm:watson-ext}. Let the notations be as in the statement
of Theorem  \ref{thm:watson-ext} and let $\sigma_g = \otimes_p
\sigma_p$.
Ichino's generalization of Watson's
formula~\cite{MR2449948} reads
  $$\frac{
      \left\lvert \int_{\Gamma \backslash \mathbb{H}}
        g(z) |f|^2(z) y^k \, \frac{d x \, d y}{y^2}
      \right\rvert^2
    }{
      \left(\int_{\SL_2(\Z) \backslash \mathbb{H}}
        |g|^2(z)  \, \frac{d x \, d y}{y^2} \right)
      \left( \int_{\Gamma \backslash \mathbb{H}}
        |f|^2(z) y^k \, \frac{d x \, d y}{y^2}
      \right)^2
    }
    =  \frac{1}{8 }
    \frac{\Lambda(\pi \times \pi \times \sigma_g,\onehalf)}{
      \Lambda(\ad \sigma_g,1) \Lambda(\ad \pi,1)^2} \ I_\infty \prod_{p| C}  I_p.
$$
 By an explicit calculation, the archimedean quantity $I_\infty$ is equal to 1 in our case; see \cite{watson-2008}. The local quantities $I_p$ are defined for each prime $p|C$ as follows:

  $$I_p =   \left(
    \frac{ L (\pi_p  \times \pi_p  \times \sigma_p, \onehalf ) \zeta_p
      (2)^2}{
      L (\ad \sigma_p, 1 )
      L (\ad \pi_p , 1 ) ^2 }
  \right) ^{-1}
   \mathcal{H}_p(v_p,\overline{v}_p, u_p)$$
  where we are using the notation of Section \ref{sec:test-vector-property}, and $v_p$ denotes the minimal vector in $\pi_p$, and $u_p$ denotes the unramified vector in $\sigma_p$. In particular, $u_p$ satisfies the condition in part (2) of Theorem \ref{t:localque} and therefore we have $$ \mathcal{H}_p(v_p,\overline{v}_p, u_p) \ \rm{Cond}(\pi_\p
\times \pi_p)^{1/2} \asymp 1.$$ On the other hand, it follows from well-known bounds on the Satake parameters that $$  \frac{ L (\pi_p  \times \pi_p  \times \sigma_p, \onehalf ) \zeta_p
      (2)^2}{
      L (\ad \sigma_p, 1 )
      L (\ad \pi_p , 1 ) ^2 } \asymp 1.$$ Therefore $I_p \rm{Cond}(\pi_\p
\times \pi_p)^{1/2} \asymp 1$ as required.

\bibliography{HNS-minimal}

\end{document}